\newtheorem{thm}{Theorem}[section]
\newtheorem*{thm*}{Theorem}
\newtheorem{thmA}{Theorem}
\newtheorem{lem}[thm]{Lemma}
\newtheorem*{lem*}{Lemma}
\newtheorem{prop}[thm]{Proposition}
\newtheorem*{prop*}{Proposition}
\newtheorem{cor}[thm]{Corollary}
\theoremstyle{definition}
\newtheorem*{defn}{Definition}
\newtheorem*{rmk}{Remark}
\DeclareMathOperator{\id}{id}
\DeclareMathOperator{\Hom}{Hom}
\DeclareMathOperator{\Lip}{Lip}
\DeclareMathOperator{\Map}{Map}
\DeclareMathOperator{\Jac}{Jac}
\DeclareMathOperator{\vol}{vol}
\DeclareMathOperator{\img}{im}
\DeclareMathOperator{\st}{st}
\DeclareMathOperator{\mass}{mass}
\DeclareMathOperator{\geo}{geo}
\DeclareMathOperator{\len}{length}
\DeclareMathOperator{\mult}{mult}
\newcommand{\ph}{\varphi}
\newcommand{\epsi}{\varepsilon}
\begin{document}
\title{Quantitative null-cobordism}
\author{Gregory R. Chambers}
\address{Department of Mathematics, University of Chicago, Chicago, Illinois, USA}
\email[G.~R.~Chambers]{chambers@math.uchicago.edu}
\author{Dominic Dotterrer}
\address{Department of Computer Science, Stanford University, Stanford, California, USA}
\email[D.~Dotterrer]{dominicd@cs.stanford.edu}
\author{Fedor Manin}
\address{Department of Mathematics, University of Toronto, Toronto, Ontario, Canada}
\email[F.~Manin]{manin@math.toronto.edu}
\author{Shmuel Weinberger}
\address{Department of Mathematics, University of Chicago, Chicago, Illinois, USA}
\email[S.~Weinberger]{shmuel@math.uchicago.edu}

\begin{abstract}
  For a given null-cobordant Riemannian $n$-manifold, how does the minimal
  geometric complexity of a null-cobordism depend on the geometric complexity of
  the manifold?  In \cite{GrQHT}, Gromov conjectured that this dependence should
  be linear.  We show that it is at most a polynomial whose degree depends on
  $n$.

  This construction relies on another of independent interest.  Take $X$ and $Y$
  to be sufficiently nice compact metric spaces, such as Riemannian manifolds or
  simplicial complexes.  Suppose $Y$ is simply connected and rationally homotopy
  equivalent to a product of Eilenberg--MacLane spaces: for example, any simply
  connected Lie group.  Then two homotopic $L$-Lipschitz maps $f,g:X \to Y$ are
  homotopic via a $CL$-Lipschitz homotopy.  We present a counterexample to show
  that this is not true for larger classes of spaces $Y$.

  
  

\end{abstract}
\maketitle
\tableofcontents
\section{Introduction}

This paper is about two intimately related problems. One of them is quantitative
algebraic topology: using powerful algebraic methods, we frequently know a lot
about the homotopy classes of maps from one space to another, but these methods
are extremely indirect, and it’s hard to understand much about what these maps
look like or how the homotopies come to be. The other is the analogous problem
in geometric topology. The paradigm of this subject since immersion theory,
cobordism, surgery etc.~has been to take geometric problems and relate them to
problems in homotopy theory, and sometimes, algebraic K-theory and L-theory, and
solve those algebraic problems by whatever tools are available. As a result, we
can solve many geometric problems without understanding at all what the
solutions look like.

A beautiful example of this paradoxical state of affairs is the result of
Nabutovsky that despite the result of Smale (proved inter alia in the proof of
the high-dimensional Poincar\'e conjecture) that every smooth codimension one
sphere in the unit $n$-disk ($n>4$) can be isotoped to the boundary, the minimum
complexity of the embeddings required in the course of such an isotopy (measured
by how soon normal exponentials to the embedding intersect) cannot be bounded by
any recursive function of the original complexity of the embedding.
Effectively, an easy isotopy would give such a sphere a certificate of its own
simple connectivity, which is known to be impossible.

In other situations, such as those governed by an $h$-principle, a hard logical
aspect of this sort does not arise.  In this paper we introduce some tools of
quantitative algebraic topology which we hope can be applied to showing that
various geometric problems have solutions of low complexity.

As a first, and, we hope, typical example, we study the problem, emphasized by
Gromov, of trying to understand the work of Thom\footnote{Thom solved the
  unoriented version of this exactly, and only solved the rational version of
  the oriented question. However, later work of Milnor and Wall did the more
  difficult homotopy theory necessary for the oriented case.} 
on cobordism.  Given a closed smooth (perhaps oriented) manifold, the cobordism
question is whether it bounds a compact (oriented) manifold.  The answer to this
is quite checkable: it is determined by whether the cycle represented by the
manifold in the relevant (i.e. $\mathbb{Z}$ or $\mathbb{Z}/2\mathbb{Z}$)
homology of a Grassmannian (where the manifold is mapped in via the Gauss map
classifying the manifold's stable normal bundle) is trivial.

This raises two questions: the first is how the geometry of a manifold is
reflected in the algebraic topological problem, and second is, how difficult
is it to find the nullhomotopy predicted by the algebraic topology.  As a test
of this combined problem, Gromov suggested the following question: Given a
manifold, assume away small scale problems by giving it a Riemannian metric
whose injectivity radius is at least $1$, and whose sectional curvature is
everywhere between $-1$ and $1$.  These properties can be achieved through a
rescaling.  A manifold possessing these properties will be said to have
\emph{bounded local geometry}.  The geometric complexity of such a manifold
can be measured by its volume.

If $M$ is a smooth compact manifold, without a specified metric, we measure its
(differential-)\hspace{1sp}topological complexity by the infimum of the
geometric complexity over all metrics with bounded local geometry.  (If $M$ is
not closed, we require it to look like a collar $\partial M \times [0,1]$ within
distance 1 of the boundary.)  This is a reasonable complexity measure: there are
only finitely many diffeomorphism classes of manifolds with a given bound on
complexity \cite{CheeFin} \cite{Pet} \cite[\S8D]{GrMS}.

The central question is as follows.  Given a smooth (oriented) manifold $M^n$ of
complexity $V$ which is null-cobordant, what is the least complexity of a
null-cobordism?  That is, if $W$ is an (oriented) compact Riemannian
$(n+1)$-manifold of bounded local geometry which bounds a manifold diffeomorphic
to $M$, how small can the volume of $W$ be?  Gromov has observed
\cite[\S 5$\frac{5}{7}$\,II]{GroPC} that tracing through the relevant
mathematics would give a tower of exponentials of $V$ (of size around the
dimension of the manifold minus $2$), but has suggested \cite{GrQHT} that the
truth might be linear.

The linearity problem, if it has an affirmative solution, would require very new
geometric ideas, and seemingly a solution to the cobordism problem essentially
different from Thom's.  We build on Thom's work to obtain the following:
\begin{thmA} \label{filling}
  If $M$ is an (oriented) closed smooth null-cobordant manifold of complexity
  $V$, then it has a null-cobordism of complexity at most
  $$c_1(n) V^{c_2(n)}.$$
\end{thmA}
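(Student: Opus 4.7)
The plan is to make Thom's cobordism argument quantitative: use the Pontryagin--Thom construction to convert the null-cobordism problem into a null-homotopy problem, apply the quantitative null-homotopy theorem stated in the abstract, and then invert the construction by transversality. In this way, the polynomial volume bound of Theorem~\ref{filling} will come out of the linear Lipschitz dependence in the main theorem, combined with the polynomial cost of the Pontryagin--Thom packaging itself.

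First I would build a controlled Pontryagin--Thom map. Since $M^n$ has complexity $V$, standard embedding results for bounded-geometry manifolds produce a bi-Lipschitz embedding of $M$ into $\mathbb{R}^{n+k}$ for some fixed $k = k(n)$, contained in a ball of radius $R = V^{O(1)}$ with constants depending only on $n$. Let $\nu(M)$ be a tubular neighborhood of width one, with projection $\pi$ to $M$ and Gauss map $\gamma \colon M \to BSO(k)$. The collapse map $\phi \colon S^{n+k}(2R) \to MSO(k)$ sending $x \in \nu(M)$ to $(\gamma(\pi(x)),\, x - \pi(x))$ in the Thom space and everything else to $\infty$ is then $O(1)$-Lipschitz, and by Thom's theorem $\phi$ is null-homotopic because $M$ bounds.

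Next I would apply the main theorem. For $k \geq 2$ the Thom space $MSO(k)$ is simply connected, and the Thom isomorphism identifies $H^*(MSO(k);\mathbb{Q})$ with a shift of the polynomial algebra $H^*(BSO(k);\mathbb{Q})$ on Pontryagin classes (and the Euler class for $k$ even), so $MSO(k)$ is rationally equivalent to a product of Eilenberg--MacLane spaces in the range of degrees relevant to us. The quantitative null-homotopy theorem then furnishes a null-homotopy $H \colon S^{n+k}(2R) \times [0,1] \to MSO(k)$ of Lipschitz constant $O(1)$ with $H_0 = \phi$ and $H_1 \equiv \infty$. After a small perturbation, $H$ is transverse to the zero section $BSO(k) \subset MSO(k)$, and the preimage $W := H^{-1}(BSO(k))$ is a smooth $(n{+}1)$-dimensional submanifold of $S^{n+k}(2R) \times [0,1]$ with $\partial W = \phi^{-1}(BSO(k)) = M$. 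Its normal bundle is pulled back from the universal bundle, so the stable normal structure is correct and $W$ is a genuine null-cobordism of $M$.

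I expect the main obstacle to be converting the Lipschitz bound on $H$ simultaneously into a volume bound and a bounded-local-geometry metric on $W$. For this I would approximate $H$ simplicially relative to a fixed unit-mesh triangulation of $MSO(k)$ in which $BSO(k)$ is a subcomplex, using a compatible unit-mesh triangulation of the domain; since $H$ is $O(1)$-Lipschitz, the simplicial approximation sends each domain simplex into a bounded union of target simplices. The domain has total volume $O(R^{n+k}) = V^{O(1)}$ and hence $V^{O(1)}$ unit simplices, so simplicial transversality in the sense of Rourke--Sanderson realizes $W$ as a PL submanifold with at most $V^{O(1)}$ top-dimensional cells and bounded-geometry triangulation. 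The standard piecewise-flat metric on this triangulation then has volume $V^{O(1)}$ and bounded local geometry, giving the required complexity bound.
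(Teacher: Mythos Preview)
Your proposal follows the paper's approach almost exactly: a controlled Pontryagin--Thom map, the main nullhomotopy theorem applied to the Thom space, then simplicial approximation and transversality to extract a bounded filling. Two points deserve sharpening. First, the Thom isomorphism gives you $H^*(MSO(k);\mathbb{Q})$ but not directly the rational homotopy type; the paper instead uses that $MSO(k)$ is $(k-1)$-connected and chooses $k>n+3$ so that $2(k-1)>n+k+1$, whence Corollary~\ref{cor:highly_connected} (rational Hurewicz) supplies the product-of-Eilenberg--MacLane-spaces structure in the relevant range. Second, your scaling convention (large sphere of radius $R\sim V^{O(1)}$, $O(1)$-Lipschitz collapse) is equivalent by rescaling to the paper's (unit sphere, $O(V)$-Lipschitz collapse), but the constant in Theorem~\ref{intro:main} depends on $X$: unwinding the rescaling, the nullhomotopy on $S^{n+k}(2R)\times[0,1]$ is $O(1)$-Lipschitz in the sphere directions but $O(R)$-Lipschitz in time, so you should take the domain to be $S^{n+k}(2R)\times[0,R]$ to get an honestly $O(1)$-Lipschitz map---which still has volume $R^{n+k+1}=V^{O(1)}$, so your simplex count and the remainder of the argument go through unchanged.
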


Unfortunately, the degree of this polynomial obtained by tracing through our
arguments grows exponentially.  Presumably, this can be substantially improved.
F.~Costantino and D.~Thurston have already shown that for 3-manifolds, one does
not need worse than quadratic growth for the complexity of the null-cobordism
\cite{CoTh}.\footnote{Though they use a PL measure of complexity, the number of
  simplices in a triangulation.}

Our proof follows the ideas of Thom quite closely and is based on making those
steps quantitative (if suboptimally) and then getting an a priori estimate on
the size of the most efficient nullhomotopy of a Thom map when the homological
condition holds.

Thom's work starts by embedding $M$ into a sphere (or equivalently Euclidean
space).  This is already an act of violence: one knows that this will
automatically introduce distortion.  This is one source of growth that we don't
know how to avoid.\footnote{A proof of the non-oriented cobordism theorem was
  given by \cite{BH} without using embedding.  However, at a key moment there's
  a ``squaring trick'' in the proof, which also ends up giving, as a result of
  an induction, a polynomial estimate with an $\exp(n^2)$ degree polynomial.} 

For manifolds embedded in the sphere, the Lipschitz constant of the Thom map is
closely related to the complexity of the submanifold\footnote{Thom produces the
  null-cobordism from a nullhomotopy by taking a transverse inverse image.}
and the thickness of a tubular neighborhood.  Conversely, if we know something
about the Lipschitz constant of a nullhomotopy of the Thom map, we can extract a
geometrically bounded transverse inverse image.

Zooming in, we see three issues that need to be taken care of.
\begin{enumerate}
\item We need to bound the Lipschitz constants of the maps at time $t$ in a
  nullhomotopy (its ``thickness''.)  Gromov has suggested \cite{GrQHT} that
  these frequently have a linear bound for maps of finite complexes into finite
  simply connected complexes.\footnote{If the domain is a circle and the target
    is a 2-complex, then for manifolds with unsolvable word problem, there can
    be no computable upper bound for the worst Lipschitz constant in a
    nullhomotopy.  But for many groups with small Dehn function, it is possible
    to do this with only a linear increase.  In particular, simple connectivity
    is an extremely natural requirement.}
\item Bounding the worst Lipschitz constant arising in a nullhomotopy does not
  quite suffice.  One needs to bound the width\footnote{The Lipschitz constant
    in the time direction.} of the nullhomotopy as well.  This is a nontrivial
  issue: a nullhomotopy of thickness $L$ can in general be replaced by one of
  width $\exp(L^d)$ where $d$ is the dimension of the domain, but this is the
  best ``automatic'' bound.
\item Even provided such bounds, a transverse inverse image may be very large
  compared to the original manifold.
\end{enumerate}
We deal with (1) and (2) simultaneously; this is the homotopy-theoretic result
mentioned earlier.  The real loss in our theorem comes from (3).  In order to
find a quantitative embedding of our manifold into $S^N$, we are forced to take
$N$ to be very large, and the embedded submanifold has small support in the
resulting sphere.  However, the support of a nullhomotopy may still be quite
large.  This problem of the increase in the support is also one we have made no
progress on, and which seems important in a broader context than just cobordism
theory.

\subsection{Building Lipschitz homotopies}

The main technical result of the paper is the following:
\begin{thmA} \label{intro:main}
  Let $X$ be an $n$-dimensional finite complex and $Y$ a finite complex which is
  rationally equivalent to a product of Eilenberg--MacLane spaces through
  dimension $n$.  If $f,g:X \rightarrow Y$ are $L$-Lipschitz homotopic maps,
  then there is a homotopy between them which is $C(X,Y)L$-Lipschitz as a map
  from $X \times [0,1]$ to $Y$.
\end{thmA}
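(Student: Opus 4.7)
The plan is to construct the homotopy inductively by climbing a Postnikov tower of $Y$ truncated at dimension $n$. Write this tower as a sequence of principal fibrations
\[ Y_{[n]} \to Y_{[n-1]} \to \cdots \to Y_{[1]} = * \]
with fibers $K(\pi_k(Y),k)$ classified by $k$-invariants $\kappa_k \in H^{k+1}(Y_{[k-1]};\pi_k(Y))$, and equip each $Y_{[k]}$ with a metric for which the projections are uniformly Lipschitz. Since $X$ is $n$-dimensional, any map $X \to Y$ factors up to homotopy through $Y_{[n]}$, so it suffices to build the homotopy there. Starting with the trivial homotopy at stage $1$, I will produce at each stage a $C_k L$-Lipschitz homotopy $H_k : X \times [0,1] \to Y_{[k]}$ between the projections of $f$ and $g$.

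At stage $k$, assume inductively a $C_{k-1}L$-Lipschitz homotopy $H_{k-1}$ into $Y_{[k-1]}$ interpolating the projections of $f,g$. The triple $(f,g,H_{k-1})$ determines, via pullback of $\kappa_k$, a $\pi_k(Y)$-valued obstruction cocycle $o_k$ on $X \times [0,1]$ relative to $X \times \{0,1\}$, whose cellular sup norm is $O(C_{k-1}L)$. This class vanishes because $f \simeq g$ in $Y$, hence in $Y_{[k]}$, so there is a primitive cochain $b_k$ with $\delta b_k = o_k$. A standard cellular realization converts $b_k$ into a modification of $H_{k-1}$ that lifts to a homotopy $H_k$ into $Y_{[k]}$, with Lipschitz constant controlled linearly by $\|b_k\|_\infty$.

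The main technical input is a \emph{linear isoperimetric inequality for cochains}: on the fixed finite complex $X \times [0,1]$, any coboundary cocycle $o$ with values in a finitely generated abelian group $A$ admits a primitive $b$ with $\|b\|_\infty \leq C(X,A)\|o\|_\infty$. Rationally this is finite-dimensional linear algebra on the cochain complex (bounding a minimum-norm primitive in terms of the inverse of the least nonzero Laplace eigenvalue), and torsion contributes only a bounded multiplicative factor, since $\pi_k(Y)$ is finitely generated. The hypothesis that $Y$ is rationally equivalent to a product of Eilenberg--MacLane spaces through dimension $n$ enters precisely here: it forces the rational parts of the $k$-invariants $\kappa_k$ to vanish for $k \leq n$, so $o_k$ is, up to bounded-cost torsion corrections, a genuine rational coboundary filled with linear control. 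Iterating over $k = 1, \dots, n$ yields a constant $C(X,Y) = \prod_k C_k$ depending only on $X$ and $Y$, and a $C(X,Y)L$-Lipschitz homotopy.

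The hardest step is the cellular realization: given a cellular cochain $b$ of bounded sup norm, produce a genuine cellular homotopy into $K(\pi_k(Y),k)$ whose Lipschitz constant is bounded linearly by $\|b\|_\infty$. This requires a careful choice of metric model for each Eilenberg--MacLane space and a quantitative cellular approximation theorem that converts cochain-level data into map-level data without accumulating super-linear loss. The rational hypothesis is indispensable: without it, the $k$-invariants can include rational Whitehead-product-type classes that introduce multiplicative (rather than additive) interactions between lower-stage obstructions, destroying the linear bound; indeed, the counterexample promised in the abstract is constructed precisely from such a non-formal situation.
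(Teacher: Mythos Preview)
Your Postnikov-tower outline has the right shape but leaves the decisive geometric step undone, and two of the intermediate claims are not correct as stated.

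First, the bound $\lVert o_k\rVert_\infty = O(C_{k-1}L)$ on the \emph{fixed} cell structure of $X\times[0,1]$ is not justified. If $H_{k-1}$ is $O(L)$-Lipschitz, then on each $(k{+}1)$-cell of unit size the obstruction value is the class of an $O(L)$-Lipschitz map of a $k$-sphere into the fiber $K(\pi_k(Y),k)$, which can be as large as $O(L^k)$ in the free part. So on the unsubdivided complex the obstruction cochain has polynomial, not linear, norm; your isoperimetric inequality on the fixed complex then only gives $\lVert b_k\rVert_\infty = O(L^k)$. More seriously, even after you modify by $b_k$ so that the obstruction cochain vanishes, you must still \emph{extend over every $(k{+}1)$-cell}: that means filling an arbitrary $O(L)$-Lipschitz, degree-zero map $S^k\to K(\pi_k,k)$ by an $O(L)$-Lipschitz disk. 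This is exactly the theorem for a single Eilenberg--MacLane target, so the induction is circular unless you supply that case separately. The paper's mechanism for this base case---simplicially approximate on an $L$-regular subdivision $X_L$ so the obstruction cochain has \emph{bounded} values, use the isoperimetric inequality on $X_L$ (which now contributes the factor $L$), and then subdivide $[0,1]$ into $O(L)$ layers and \emph{spread the primitive along the time direction} so that the resulting cochain on $X_L\times I$ again has bounded values, allowing extension cell by cell from a fixed finite menu of fillings---is precisely the content you defer to ``cellular realization.''

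Second, the assertion ``this class vanishes because $f\simeq g$ in $Y_{[k]}$'' is not automatic: the obstruction $H_{k-1}^*\kappa_k$ depends on the particular homotopy $H_{k-1}$ you built, not just on $f$ and $g$, and different homotopies in $Y_{[k-1]}$ can have different obstructions to lifting. The paper confronts the analogous issue explicitly: after building a controlled homotopy $F$ in $Z=\prod_i K(\mathbb{Z},n_i)$ it multiplies $F$ (via the H-space structure of $Z$) by an element drawn from a fixed finite set $\mathcal{H}\subset[SX,Z]$ so that the result does homotope rel ends into $Y$, and only then invokes a separate lemma---for pairs $(Z,Y)$ with finite relative homotopy groups, one can push a map from $Z$ into $Y$ at bounded, $L$-independent Lipschitz cost. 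This clean separation of the free part (isoperimetry on a subdivision) from the torsion part (finiteness gives a uniform bound) is what makes the argument close; your outline entangles the two and omits both the subdivision/spreading idea and the lifting correction.
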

The simplest setting in which this theorem applies is when $Y$ is an
odd-dimensional sphere, or when $Y$ is a $2k$-sphere and $n \leq 4k-2$.  More
generally, $Y$ may be any Lie group or, even more generally, H-space.  Given
that the targets in many topological problems are H-spaces, we are optimistic
that this partial result regarding the linearity of homotopies will have more
general application.  (We give an example below showing that this theorem cannot
be extended to arbitrary simply connected complexes in place of $Y$.)

One antecedent to this result is given in \cite{FWPNAS}, where maps with target
possessing finite homotopy groups are studied.  In that setting, the width of a
nullhomotopy is actually bounded universally, independent of $X$.  On the other
hand, that paper shows that for any space with infinite homotopy groups there
cannot be too uniform of an estimate of a linear upper bound on nullhomotopies.

The obstruction in \cite{FWPNAS} has to do ultimately with homological filling
functions.  Isoperimetry likewise comes up in our result, and is best
appreciated by considering the following very concrete setting:
\begin{lem*}
  If $f: S^2 \rightarrow S^2$ is a degree zero map with Lipschitz constant $L$,
  then there is a $CL$-Lipschitz nullhomotopy for some $C$.
\end{lem*}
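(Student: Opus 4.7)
The plan is to derive this lemma as a direct application of Theorem~\ref{intro:main} with $X=Y=S^2$ and $n=2$. The hypothesis on $Y$ is satisfied: the rational cohomology of $S^2$ is $\mathbb{Q}[x]/(x^2)$ with $|x|=2$, which through dimension $2$ agrees with the rational cohomology of $K(\mathbb{Q},2)$; equivalently, the map $S^2 \to K(\mathbb{Z},2)=\mathbb{C}P^\infty$ classifying the fundamental class is a rational equivalence through dimension $2$. Taking $g$ to be any constant map -- which is $0$-Lipschitz and hence $L$-Lipschitz -- and recalling that a degree zero self-map of $S^2$ is nullhomotopic by Hopf's theorem, Theorem~\ref{intro:main} produces a homotopy from $f$ to $g$ with Lipschitz constant $C(S^2,S^2)\cdot L$, which is precisely a $CL$-Lipschitz nullhomotopy.

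Since the lemma is advertised as a concrete illustration of how isoperimetry enters the story, I would also sketch the same result directly from that angle. Rescale so that $f\colon LS^2\to S^2$ is $1$-Lipschitz, and replace it by a simplicial approximation on a mesh-$1$ triangulation of $LS^2$ (having $O(L^2)$ simplices). The pullback $f^*\omega$ of the fundamental cocycle is then a $2$-cocycle of total mass $\deg f=0$, hence a coboundary. The linear isoperimetric inequality for $2$-cocycles on $S^2$ produces a $1$-cochain $b$ with $\delta b=f^*\omega$ and $\|b\|_\infty=O(1)$, independent of $L$. Converting this algebraic cofilling into a geometric nullhomotopy of width $O(1)$ in the rescaled metric, and then unscaling, yields the stated $O(L)$-Lipschitz nullhomotopy.

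The principal obstacle -- shared by both approaches -- is the passage from a bounded algebraic cofilling to a geometrically bounded homotopy. This is where the H-space structure underlying Theorem~\ref{intro:main} becomes essential: rationally through dimension $2$, $S^2$ behaves like $K(\mathbb{Q},2)$, so algebraic sums of cochains can be realized by genuine geometric homotopies with linearly controlled Lipschitz constants. In any attempt at a more self-contained direct proof, the same subtlety would reappear in the step that turns the short $1$-cochain $b$ into an actual deformation of maps, and this is the step on which I would focus my effort.
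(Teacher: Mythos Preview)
Your first approach—deriving the lemma from Theorem~\ref{intro:main}—is logically valid, but it inverts the paper's logic: the lemma is stated precisely as the motivating special case whose direct proof supplies ``the main intuition for the proof of Theorem~\ref{intro:main}.''  The paper proves it by hand in \S\ref{S:S2} via an explicit tube/prism construction, and that construction is then generalized to obtain Theorem~\ref{thm:ooo} and ultimately Theorem~\ref{intro:main}.  So your derivation, while correct, is not the paper's argument and sidesteps exactly the content the lemma is meant to illustrate.

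Your second sketch is much closer to what the paper actually does, but it contains a real error in the isoperimetric step.  You assert $\|b\|_\infty = O(1)$ independent of $L$; in fact one only obtains $\|b\|_\infty = O(L)$.  On $LS^2$ with a mesh-$1$ triangulation (equivalently, on $S^2$ with the $L$-regular subdivision), the $\ell^\infty$ coisoperimetric constant scales linearly in $L$: if $\omega$ is $+1$ on the $\sim L^2$ triangles of one hemisphere and $-1$ on the other, any $b$ with $\delta b=\omega$ must transport total degree $\sim L^2$ across an equator of $\sim L$ edges, forcing $\|b\|_\infty \gtrsim L$.  This is exactly the content of Lemma~\ref{lem0'}, and the paper points to this very example.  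The factor of $L$ is not a nuisance but the crux of the construction: it dictates that the interval $[0,1]$ be cut into $\sim L$ prism layers, after which the degree on each $2$-cell of each prism is bounded (by $3$), and the homotopy is assembled cell-by-cell from a fixed finite repertoire of local fillings.  Your final paragraph also mislocates the difficulty: in the $S^2$ case the passage from the cochain $b$ to a geometric nullhomotopy is this explicit, entirely combinatorial prism construction—no H-space structure enters.  The multiplication map appears only in the general proof of Theorem~\ref{intro:main}, where it is used to adjust the homotopy class of the auxiliary homotopy in $Z$ before invoking Lemma~\ref{lem:Qlift}.
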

This can be proved following the classical idea of Brouwer of cancelling point
inverses with opposite local degree, but in a careful layered way so as to be
able to control the Lipschitz constants.  We will give a careful explanation of
this as it provides the main intuition for the proof of Theorem
\ref*{intro:main}.

\subsection{Obstruction theory} \label{S:S2}

Let $f:S^2 \to S^2$ be a nullhomotopic $L$-Lipschitz map.  We assume this has a
very particular structure; later we will see that such a structure can be
obtained with only small penalties on constants.  The domain sphere $X$ is a
subdivision of a tetrahedron into a grid isometric subsimplices, $L$ to a side.
The map $f$ maps its 1-skeleton to the basepoint; for every 2-simplex either it
also maps it to the basepoint, or it maps a ball in the simplex homeomorphically
to $S^2$ minus the basepoint, with degree $\pm 1$.
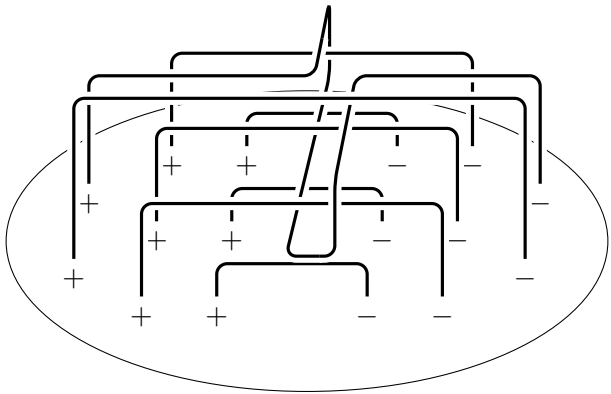
\begin{figure} 
  \centering
  \begin{tikzpicture}
    \tikzstyle{backing}=[white,line width=5pt];
    \draw (0,0) ellipse (4 and 2);
    \node (p1) at (-2.9,0.5) {$+$};
    \node (p2) at (-3.1,-0.5) {$+$};
    \node (p3) at (-1.8,1) {$+$};
    \node (p4) at (-2,0) {$+$};
    \node (p5) at (-2.2,-1) {$+$};
    \node (p6) at (-0.8,1) {$+$};
    \node (p7) at (-1,0) {$+$};
    \node (p8) at (-1.2,-1) {$+$};
    \node (m1) at (3.1,0.5) {$-$};
    \node (m2) at (2.9,-0.5) {$-$};
    \node (m3) at (2.2,1) {$-$};
    \node (m4) at (2,0) {$-$};
    \node (m5) at (1.8,-1) {$-$};
    \node (m6) at (1.2,1) {$-$};
    \node (m7) at (1,0) {$-$};
    \node (m8) at (0.8,-1) {$-$};
    \draw[backing,rounded corners] (p6) -- +(0,0.7) -- +(2,0.7) -- (m6);
    \draw[very thick,rounded corners] (p6) -- +(0,0.7) -- +(2,0.7) -- (m6);
    \draw[backing,rounded corners] (p7) -- +(0,0.7) -- +(2,0.7) -- (m7);
    \draw[very thick,rounded corners] (p7) -- +(0,0.7) -- +(2,0.7) -- (m7);
    \draw[backing,rounded corners] (p8) -- +(0,0.7) -- +(2,0.7) -- (m8);
    \draw[very thick,rounded corners] (p8) -- +(0,0.7) -- +(2,0.7) -- (m8);
    \draw[backing,rounded corners] (0.2,2.7) -- +(0.1,0.5) -- +(0.1,-0.5) --
    +(-0.48,-2.9) -- +(0,-2.9);
    \draw[very thick,rounded corners] (0.2,2.7) -- +(0.1,0.5) -- +(0.1,-0.5) --
    +(-0.48,-2.9) -- +(0,-2.9);
    \draw[backing,rounded corners] (p3) -- +(0,1.5) -- +(4,1.5) -- (m3);
    \draw[very thick,rounded corners] (p3) -- +(0,1.5) -- +(4,1.5) -- (m3);
    \draw[backing,rounded corners] (p4) -- +(0,1.5) -- +(4,1.5) -- (m4);
    \draw[very thick,rounded corners] (p4) -- +(0,1.5) -- +(4,1.5) -- (m4);
    \draw[backing,rounded corners] (p5) -- +(0,1.5) -- +(4,1.5) -- (m5);
    \draw[very thick,rounded corners] (p5) -- +(0,1.5) -- +(4,1.5) -- (m5);
    \draw[backing,rounded corners] (p1) -- +(0,1.7) -- +(3,1.7) -- +(3.1,2.2)
    +(3.07,-0.7) -- +(3.27,-0.7) -- +(3.27,0.3) -- +(3.55,1.7) -- +(6,1.7)
    -- (m1);
    \draw[very thick,rounded corners] (p1) -- +(0,1.7) -- +(3,1.7) -- +(3.1,2.2)
    +(3.07,-0.7) -- +(3.27,-0.7) -- +(3.27,0.3) -- +(3.55,1.7) -- +(6,1.7)
    -- (m1);
    \draw[backing,rounded corners] (p2) -- +(0,2.4) -- +(6,2.4) -- (m2);
    \draw[very thick,rounded corners] (p2) -- +(0,2.4) -- +(6,2.4) -- (m2);
  \end{tikzpicture}
  \caption{Connecting preimages of opposite orientations with tubes: the global
    picture.  Note that the Lipschitz constant of a nullhomotopy depends only on
    the thickness of the tubes; therefore inefficiencies in routing only matter
    insofar as they force many tubes to bunch up in the same region.}
\end{figure}

To construct a nullhomotopy of $f$, we need to connect the positive and negative
preimages with tubes in $X \times [0,1]$.  Care must be taken to route these
tubes in such a way that there are not too many clustered in any given spot.  To
do this, we decide beforehand how many tubes need to go through any particular
part of $X \times [0,1]$ and then connect them up in any available way.

To make this precise, assume that the tubes miss $X^{(0)} \times [0,1]$.  Then we
can count the number of tubes going through $p \times [0,1]$ for each 1-simplex
$p$ of $X$.  Every tube that goes into $q \times [0,1]$, for any 2-simplex $q$,
must either come out through another edge or come back to 0.  In other words, if
$\alpha \in C^1(X;\mathbb{Z})$ is the cochain which indicates the number of
tubes (with sign!) going through $p \times [0,1]$, then $\omega=\delta\alpha$
gives the degree of $f$ on 2-simplices of $X$.  In the language of obstruction
theory, $\omega$ is the obstruction to nullhomotoping $f$, and the existence of
$\alpha$ demonstrates that the obstruction can be resolved.

\begin{figure} 
  \centering
  \begin{subfigure}{0.7\textwidth}
    \centering
  \begin{tikzpicture}[scale=1.3]
    \draw (0,2.5) -- (.2,0) -- (1.4,1.3) -- cycle;
    \node(base) at (.65,1.25){$\langle \omega,q \rangle$};
    \node(top) at (8.5,1.5){$0$};
    \draw (0,2.5) -- (.2,0) -- (1.4,1.3) -- cycle;
    \foreach \x in {1.5,3,4.5,8} {
      \draw (\x,2.5) -- (\x+1.4,1.3) -- (\x+.2,0);
      \draw[dotted] (\x,2.5) -- (\x+.2,0);
    }
    \draw (0,2.5) -- (8,2.5) (.2,0) -- (8.2,0) (1.4,1.3) -- (9.4,1.3);
    \node(etc) at (6.8,1.8){\huge{$\cdots$}};
    \node(etc2) at (7,.7){\huge{$\cdots$}};
    \node(side1) at (1.4,2){
      $\left\lfloor \frac{1}{CL}\langle\alpha,p_0\rangle \right\rfloor$};
    \draw[->] (2.7,2.8) node[anchor=west]{
        $\left\lfloor \frac{2}{CL}\langle\alpha,p_0\rangle \right\rfloor
         -\left\lfloor \frac{1}{CL}\langle\alpha,p_0\rangle \right\rfloor$}
      .. controls (2.3,2.8) and (2.4,2.3) .. (2.9,1.8);
    \draw[->] (3.2,-.4) node[anchor=west]{
        $\langle \omega,q \rangle-\sum_{p \in \partial q}
         \left\lfloor \frac{2}{CL}\langle\alpha,p\rangle \right\rfloor$}
      .. controls (2,-.4) and (4.5,1.2) .. (3.8,1.2);
    \draw[->] (8.7,2.5) node[anchor=south]{edge $p_0$} -- (8.7,2);
  \end{tikzpicture}
  \caption{Degrees on 2-cells of prisms.}
  \end{subfigure}
  \begin{subfigure}[b]{0.7\textwidth}
    \centering
  \begin{tikzpicture}[scale=1.3]
    \draw (0,2.5) -- (.2,0) -- (1.4,1.3) -- cycle;
    \draw (0,2.5) -- (.2,0) -- (1.4,1.3) -- cycle;
    \foreach \x in {1.5,3,4.5,8} {
      \draw (\x,2.5) -- (\x+1.4,1.3) -- (\x+.2,0);
      \draw[dotted] (\x,2.5) -- (\x+.2,0);
    }
    \draw (0,2.5) -- (8,2.5) (.2,0) -- (8.2,0) (1.4,1.3) -- (9.4,1.3);
    \node(etc) at (6.8,1.8){\huge{$\cdots$}};
    \node(etc2) at (7,.7){\huge{$\cdots$}};
    \draw (.6,1.25) circle [x radius = .25, y radius = .3] node{$+$};
    \draw (.6,1.55) .. controls (1.2,1.55) .. (1.14,2);
    \draw (.6,.95) .. controls (1.6,.95) .. (1.66,1.8);
    \draw (1.4,1.9) circle [x radius = .2, y radius = .3, rotate=45] node{$-$};
    \draw (3,.7) circle [x radius = .2, y radius = .3, rotate=-45] node{$+$};
    \draw (5.1,1.55) -- (3.6,1.55) .. controls (2.74,1.55) .. (2.74,.6);
    \draw (5.1,.95) -- (3.6,.95) .. controls (3.26,.95) .. (3.26,.8);
    \draw (3.6,1.25) circle [x radius = .25, y radius = .3];
    \draw (5.1,1.25) circle [x radius = .25, y radius = .3] node{$-$};
    \draw (2.4,2.15) circle [x radius = .2, y radius = .3, rotate=45] node{$-$};
    \draw (2.14,.2) -- (2.14,2.3) (2.66,.45) -- (2.66,2.05);
    \draw (2.4,.35) circle [x radius = .2, y radius = .3, rotate=-45] node{$+$};
  \end{tikzpicture}
    \caption{Connecting homeomorphic preimages of $S^2 \setminus *$ with tubes.}
  \end{subfigure}
  \caption{Constructing a nullhomotopy: the local picture.}
\end{figure}
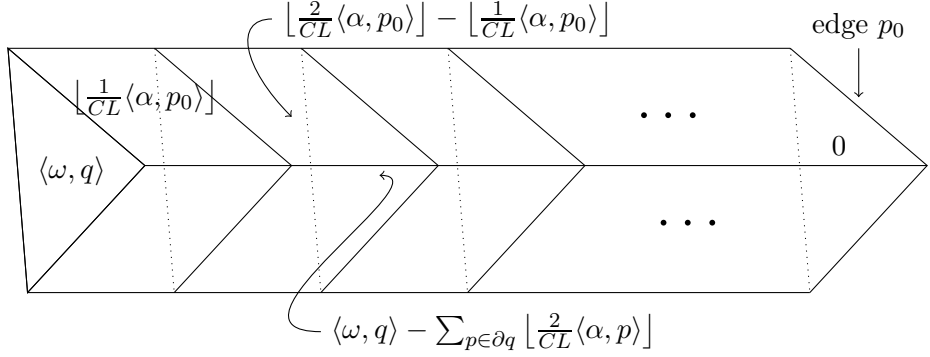
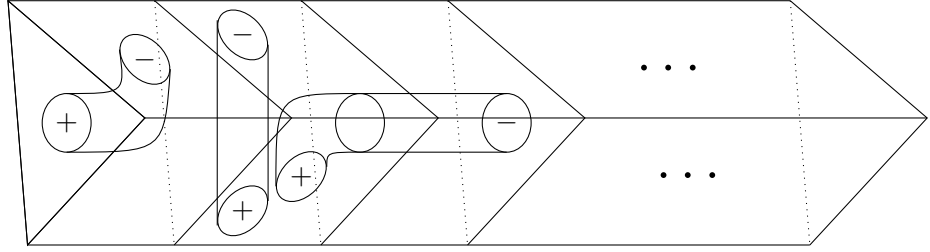
To ensure that it can be resolved efficiently, we need to pick a relatively
small $\alpha$.  The best we can do is to choose an $\alpha$ which takes values
$\leq CL$.  By considering a situation with degree $O(L^2)$ on one side of $X$
canceling out degree $-O(L^2)$ on the other side, we see that we can do no
better.  That this is also the worst possible situation follows from the
classical isoperimetric inequality for spheres; this is discussed in much
greater generality in Section \ref{Siso}.

In effect, once we have set $\alpha$, deciding how many tubes must go through a
given point, we can connect them up in an entirely local way.  We give
$X \times [0,1]$ a cellulation by prisms of length $1/CL$ and base the
2-simplices of $X$.  We then construct the map $F$ by skeleta on this
cellulation:
\begin{enumerate}
\item First, map the 1-skeleton to the basepoint.
\item Next, we can map the 2-cells via maps of degree between $-3$ and $3$ in
  such a way that the map on the boundary of each prism has total degree zero.
  (It is here that we ``layer'' the nullhomotopy.)
\item Finally, we choose a way to connect pairs of preimages on each prism via
  tubes.  Since the number of tubes in each prism is bounded, we can do this
  with bounded Lipschitz constant.
\end{enumerate}
For the second step, we need to use our $\alpha$.  If we ensure that for each
1-simplex $p$ of $X$, the degree of $F$ on $p \times [0,1]$ is
$\langle\alpha,p\rangle$, then $F$ will have degree 0 on the boundary of each
``long prism'' $q \times [0,1]$, where $q$ is a 2-simplex of $X$.

It remains to make sure that the degree is zero on the ``short prisms''.  To do
this, we spread $\langle\alpha,p\rangle$ as evenly as possible along the unit
interval: for every integer $1 \leq t \leq CL$, the degree of $\alpha$ on
$p \times [0,t/CL]$ is $\lfloor \frac{t}{CL}\langle\alpha,p\rangle \rfloor$.
This then also determines the required degree on $q \times \{t/CL\}$ for every
2-simplex $q$ and time $t$ to make the total degree on the boundary of each
prism zero.  It is easy to check that the resulting degrees on all 2-cells are
at most 3.

\subsection{Outline of proof of Theorem \ref*{intro:main}}

We now describe how the proof of the above Lemma leads to the proof of Theorem
\ref*{intro:main}.  The motto is the same: if we can kill the obstruction to
finding a homotopy, then we can do the killing in a bounded way.

The first step is to reduce to a case where obstruction theory applies.  For
this, we simplicially approximate our map in a quantitative way.  That is, given
a map $X \to Y$ between metric simplicial complexes, the fineness of the
subdivision of $X$ must be inversely proportional to the Lipschitz constant of
the map.

From here, the general strategy is to build a homotopy by induction on the
skeleta of $X \times I$ with a product cell structure.  This homotopy will not
in general be simplicial, but it will have the property that restrictions to
each cell form a fixed finite set depending only on $X$ and $Y$.  Every time we
run into a null-cohomologous obstruction cocycle, we use a cochain that it
bounds to modify the map on the previous skeleton.  We ensure that these
modifications are chosen from a fixed finite set of maps, leaving us with a
fixed finite set of maps on the boundaries of cells one dimension higher.  Then
we can fill each such map in a fixed way, preserving the desired property.

When the obstructions are torsion, the main issue is the well-known one that
killing obstructions ``blindly'' will sometimes lead to a dead end even when a
homotopy exists.  On the other hand, since there is a finite number of choices
of torsion values for a cochain to take, we may avoid this by following a ``road
map'' given by a known, but potentially uncontrolled, nullhomotopy of $f$.  This
is the content of Lemma \ref{lem:Qlift}.

On the other hand, when we get integral obstructions, our choice of rational
homotopy structure ensures that such issues do not come up.  On the other hand,
we do need to worry about isoperimetry.  This is covered by Theorem
\ref{thm:ooo}, which generalizes the argument above.

\subsection{Acknowledgments}
This paper owes a lot to the ideas of Gromov, as this introduction makes amply
clear.  The last author would like to thank Steve Ferry for a collaboration that
began this work.  Essentially, the polynomial bound in the non-oriented case can
be obtained by combining \cite{FWPNAS} with some of the embedding arguments in
this paper.  We also thank MSRI for its hospitality during a semester (long ago)
when we began working towards the results reported here.  The authors would also
like to thank Alexander Nabutovsky and Vitali Kapovitch for pointing out
simplifications to several proofs, and for many useful conversations.  Finally,
we would like to thank the anonymous referee for a large number of remarkably
insightful suggestions and corrections.  The first author was partially
supported by an NSERC postdoctoral fellowship.

\section{Preliminaries}

In this section, we discuss how to subdivide a metric simplicial complex so that
the edges all have length approximately $1/L$ for a specified $L$.  We also show
that, for any simplicial map $f: X \rightarrow Y$ and any $L$, we can subdivide
$X$ as above to form $X_L$ and homotope $f$ through a short homotopy
to $\widetilde{f}: X_L \rightarrow Y$.

\subsection{Regular subdivision of simplices}
\begin{defn}
  Define a \emph{simplicial subdivision scheme} to be a family, for every $n$
  and $L$, of metric simplicial complexes $\Delta^n(L)$ isometric to the
  standard $\Delta^n$ with length 1 edges, such that $\Delta^n(L)$ restricts to
  $\Delta^{n-1}(L)$ on all faces.  A subdivision scheme is \emph{regular} if for
  each $n$ there is a constant $A_n$ such that $\Delta^n(L)$ has at most $A_n$
  isometry classes of simplices and a constant $r_n$ such that all 1-simplices
  of $\Delta^n(L)$ have length in $[r_n^{-1}L^{-1},r_nL^{-1}]$.

  Given a regular subdivision scheme, we can define the
  \emph{$L$-regular subdivision} of any metric simplicial complex, where each
  simplex is replaced by an appropriately scaled copy of $\Delta^n(L)$.
\end{defn}
Note that $L$ times barycentric subdivision is \emph{not} regular.  On the other
hand, there are at least two known examples of regular subdivision.  One is the
edgewise subdivision of Edelsbrunner and Grayson \cite{EdGr}, which has the
advantages that the $L$-regular subdivision of $\Delta^n(M)$ is $\Delta^n(LM)$
and that the lengths of edges vary by a factor of only $\sqrt{2}$.  Roughly, the
method is to cut the simplex into small polyhedra by planes parallel to the
$(n-1)$-dimensional faces, then partition each such polyhedron into simplices in
a standard way.  The other is described by Ferry and Weinberger \cite{FWPNAS}:
the trick is to subdivide $\Delta^n$ into $n+1$ identical cubes, then subdivide
these in the obvious way into $L^n$ cubes, and finally subdivide these in a
canonical way into simplices.  This method has the advantage of being easy to
describe.

None of the listed advantages is crucial for our continued discussion, so we may
remain agnostic as to how precisely we subdivide our simplices.

\subsection{Simplicial approximation}
\begin{prop}[Quantitative simplicial approximation theorem]
  \label{prop:quantitative_simplicial_approximation}
  For finite simplicial complexes $X$ and $Y$ with piecewise linear metrics,
  there are constants $C$ and $C^\prime$ such that any $L$-Lipschitz map
  $f:X \to Y$ has a $CL$-Lipschitz simplicial approximation via a
  $(CL+C^\prime)$-Lipschitz homotopy.
\end{prop}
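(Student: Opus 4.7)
The plan is to execute the classical simplicial approximation construction with careful Lipschitz bookkeeping at each step. Let $\epsi_Y > 0$ be a Lebesgue number for the open cover of $Y$ by open vertex stars; this exists because $Y$ is a finite PL complex. Choose a constant $C_0$, depending on $Y$ and on the regular subdivision scheme, large enough that in the $(C_0 L)$-regular subdivision $X_L$ of $X$ every closed star $\st(v)$ has diameter at most $\epsi_Y/(2L)$. Since $f$ is $L$-Lipschitz, $f(\st(v))$ has diameter $\le \epsi_Y/2$, so there is a vertex $\phi(v) \in Y$ with $f(\st(v)) \subset \st(\phi(v))$. Extending $\phi$ affinely on each simplex produces the candidate simplicial approximation $\tilde f\colon X_L \to Y$.

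Next I would estimate $\Lip(\tilde f)$. On each simplex $\sigma$ of $X_L$, which has diameter $O(1/L)$, the map $\tilde f$ is affine into a simplex of $Y$ of bounded diameter, so $\Lip(\tilde f|_\sigma) = O(L)$. Because $X_L$ has only $A_n$ isometry classes of simplices with edge lengths comparable to $1/(C_0 L)$, these local estimates patch to a global bound $\Lip(\tilde f) \le CL$ for some $C = C(X,Y)$, using the standard fact that geodesics in a finite PL complex cross simplices in a uniformly bounded way.

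The homotopy between $f$ and $\tilde f$ is built by straight-line interpolation inside simplices of $Y$. For $x$ in the open simplex $\sigma$ of $X_L$ with vertices $v_0, \dots, v_k$, the images $\phi(v_0), \dots, \phi(v_k)$ span a simplex $\tau \subset Y$, and $f(x) \in \bigcap_i \st(\phi(v_i)) = \st(\tau)$, so $f(x)$ lies in the interior of some simplex $\tau'$ of $Y$ having $\tau$ as a face. Since $\tilde f(x) \in \tau \subset \overline{\tau'}$, both endpoints lie in $\overline{\tau'}$, and I set $H(x,t)$ to be the point of $\overline{\tau'}$ whose barycentric coordinates are $(1-t)$ times those of $f(x)$ plus $t$ times those of $\tilde f(x)$. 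Continuity across faces of $X_L$ follows because as $x$ approaches a face of $\sigma$, the enclosing simplex $\tau$ shrinks to a face and $\tau'$ updates consistently.

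For the Lipschitz bound on $H$ as a map $X \times [0,1] \to Y$ with the product metric: the temporal derivative at fixed $x$ is bounded by $d_Y(f(x), \tilde f(x)) \le \diam(\tau') \le C'$, where $C'$ depends only on the PL mesh of $Y$; the spatial Lipschitz constant is $O(\Lip(f) + \Lip(\tilde f)) = O(L)$ by a convexity argument inside each simplex of $Y$. Combining these directions yields $\Lip(H) \le CL + C'$. The main technical obstacle is the spatial Lipschitz estimate on $H$: the interpolation is only piecewise affine in $Y$ because the enclosing simplex $\tau'$ jumps as $x$ varies, so one must verify that the finite combinatorial and metric complexity of $Y$ forces uniformly bounded distortion when straight-line homotopies are glued across adjacent simplices. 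This is the step where the PL-metric hypothesis on $Y$, rather than merely a Lipschitz one, is essential.
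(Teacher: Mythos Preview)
Your proposal is correct and follows essentially the same route as the paper's proof: both trace the classical simplicial approximation argument (Lebesgue number for the star cover of $Y$, regular subdivision of $X$ at scale $\sim 1/L$, vertex assignment $\phi$, affine extension, straight-line homotopy), keeping track of Lipschitz constants at each step. The paper is terser---it pulls the Lebesgue number back along $f$ rather than pushing star diameters forward, and it dispatches both the Lipschitz extension and the linear homotopy with one-line appeals to ``standard argument''---whereas you spell out the enclosing-simplex machinery for the homotopy and flag the patching across faces as the delicate point; but these are differences of exposition, not of strategy.
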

\begin{proof}
  We trace constants through the usual proof of the simplicial approximation
  theorem, as given in \cite{Hatc}.

  Denote the open star of a vertex $v$ by $\st v$.  Let $c$ be a Lebesgue number
  for the open cover $\{\st w \mid w$ is a vertex of $Y\}$ of $Y$, that is, a
  number such that every $c$-ball in $Y$ is contained in one of the sets in the
  cover.  Then $c/L$ is a Lebesgue number for the open cover $\{f^{-1}(\st w)\}$
  of $X$.  Take a regular subdivision $X_L$ of $X$ so that for some $0<d(X)<1/2$
  each simplex of $X_L$ has diameter between $dc/L$ and $c/2L$.  Hence $f$ maps
  the closed star of each vertex $v$ of $X_L$ to the open star of some vertex
  $g(v)$ of $Y$.  This gives us a map $g:X_L^{(0)} \to Y^{(0)}$ which takes
  adjacent vertices of $X_L$ to adjacent vertices of $Y$, and hence if $\ell$ is
  the maximum edge length of $Y$, $g$ is $\ell L/dc$-Lipschitz.

  By a standard argument, this map $g$ extends linearly to a map $g:X_L \to Y$
  with the same Lipschitz constant.  The linear homotopy from $f$ to $g$ has
  Lipschitz constant $\max\{\ell L/dc, \ell\}$.
\end{proof}
\begin{rmk}
  Suppose that $Y$ and $X$ are $n$-dimensional and made up of standard simplices
  of edge length 1.  Then $c$ is the inradius of a standard simplex,
  $c=\frac{1}{\sqrt{2n(n+1)}}$, and by using the edgewise subdivision we can
  make sure that $d>1/2\sqrt{2}$.  Thus the Lipschitz constant of the map
  increases by a factor of at most
  $$C \leq 4\sqrt{n(n+1)}.$$
  Furthermore, if $X$ is 2-dimensional, then all of the edge lengths of the
  subdivision are equal.  Therefore, in this case, $C \leq 4\sqrt{3}$, and in
  fact approaches $2\sqrt{3}$ for large $L$, since we can choose a subdivision
  parameter very close to $L$, and thus $d$ very close to $1$.
\end{rmk}
We will use simplicial approximation mainly as a way of ensuring that our maps
have a uniformly finite number of possible restrictions to simplices.  Almost
all instances of ``simplicial'' in this paper can be replaced with ``such that
the restrictions to simplices are chosen from a finite set associated with the
target space.''  This formulation makes sense even when the target space is not
a simplicial complex.  In particular, it is preserved by postcomposition with
any map, for example one collapsing certain simplices.

\section{Isoperimetry for integral cochains} \label{Siso}

The goal of this section is to prove the following (co)isoperimetric inequality.
\begin{lem}[$\ell^\infty$ coisoperimetry] \label{lem0'}
  Let $X$ be a finite simplicial complex equipped with the standard metric, and
  let $X_L$ be the cubical or edgewise $L$-regular subdivision of $X$, and $k
  \geq 1$.  Then there is a constant $C_{\mathrm{IP}}=C_{\mathrm{IP}}(X,k)$ such that
  for any simplicial coboundary $\omega \in C^k(X_L;\mathbb{Z})$, there is an
  $\alpha \in C^{k-1}(X_L;\mathbb{Z})$ with $d\alpha=\omega$ such that
  $\lVert\alpha\rVert_\infty \leq C_{\mathrm{IP}}L\lVert\omega\rVert_\infty$.
\end{lem}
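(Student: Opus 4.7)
My plan is to build $\alpha$ as the dual of an integer chain contraction $h$ of $X_L$ whose cellular $\ell^1$-norm grows linearly in $L$. More precisely, the goal is an integer operator
\[
h:C_{k-1}(X_L;\mathbb{Z})\to C_k(X_L;\mathbb{Z})
\]
satisfying $\partial h+h\partial=\mathrm{id}-\pi$, where $\pi$ projects onto a fixed set of integer cycle representatives of $H_{k-1}(X;\mathbb{Z})$, together with $\|h(\sigma)\|_1\leq C_{\mathrm{IP}}L$ for every $(k-1)$-cell $\sigma$ of $X_L$. Given such $h$, set $\alpha:=h^*\omega$, defined by $\alpha(\sigma):=\omega(h(\sigma))$. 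Dualizing the chain contraction identity and using that $\omega$ is a cocycle with trivial cohomology class (so $\pi^*\omega=0$), one obtains $d\alpha=\omega$, and the norm bound follows at once from $|\alpha(\sigma)|\leq\|h(\sigma)\|_1\cdot\|\omega\|_\infty$.

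\textbf{Construction of $h$.} Assemble $h$ from local chain contractions on each coarse simplex. For a coarse simplex $\sigma$ of $X$, the subdivided $\sigma_L$ is contractible, and the straight-line retraction of $\sigma$ onto a chosen basepoint vertex $v_\sigma$ yields a chain contraction $h_\sigma$ of $\sigma_L$ to $v_\sigma$. The key volume estimate: a $(k-1)$-cell $\tau$ of $\sigma_L$ has $(k-1)$-volume $\sim L^{-(k-1)}$, so the cone from $v_\sigma$ over $\tau$ has $k$-volume $\sim L^{-(k-1)}$ (the distance to $v_\sigma$ is $\leq 1$), and this cone contains $\sim L$ $k$-cells of $\sigma_L$ (each of $k$-volume $\sim L^{-k}$). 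This gives $\|h_\sigma(\tau)\|_1=O_n(L)$. Glue the local $h_\sigma$ using fixed ``connectors'' between the various $v_\sigma$'s; since $X$ has a fixed number of coarse simplices, this gluing contributes only a bounded constant to the final norm, absorbed into $C_{\mathrm{IP}}(X,k)$.

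\textbf{Main obstacle.} The principal difficulty is making each local chain contraction $h_\sigma$ integer-valued and cellular with respect to the subdivided structure of $\sigma_L$, while retaining the linear $\ell^1$-norm bound. The straight-line cone from $v_\sigma$ to a cell $\tau$ is a $k$-simplex of the ambient $\Delta^n$ but not in general a subcomplex of $\sigma_L$, and crude cellular approximations can fail the chain contraction identity or blow up the norm. I expect to exploit the product structure of the cubical (Ferry--Weinberger) or edgewise (Edelsbrunner--Grayson) subdivision: build $h_\sigma$ by an iterated cellular prism construction that sweeps each cell toward $v_\sigma$ one coordinate direction at a time, giving a genuine chain contraction with integer coefficients and $\ell^1$-norm $O(L)$. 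The hypothesis that $X_L$ is one of these two regular subdivisions is exactly what is needed to carry out such a construction uniformly.
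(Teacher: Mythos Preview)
Your approach is viable and genuinely different from the paper's proof.  The paper proceeds in two stages: first it proves the lemma over $\mathbb{Q}$ by establishing the dual $\ell^1$ isoperimetric inequality for chains via the Federer--Fleming deformation theorem and then invoking Hahn--Banach duality between $\ell^1$ and $\ell^\infty$; second, it rounds the rational primitive to an integral one through a separate ``bounded lifting'' lemma that rests on constructing $k$-spanning trees of $X_L$ whose \emph{gnarledness} is bounded independently of $L$.  Your route bypasses both stages by building an integer chain contraction directly and dualizing; this is more elementary (no geometric measure theory, no functional-analytic duality, no separate rounding step) but leans harder on the explicit combinatorics of the cubical or edgewise subdivision, whereas the paper's rational argument goes through for \emph{any} regular subdivision scheme.

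Two points need tightening.  First, there is a degree slip: to obtain $\delta\alpha=\omega$ you must use the chain-homotopy identity in degree $k$, namely $\partial h_k+h_{k-1}\partial=\mathrm{id}-\iota\pi$ on $C_k(X_L)$, with $\iota\pi$ landing in \emph{$k$-cycles} so that the coboundary $\omega$ annihilates $\iota\pi(c)$; thus you need both $h_{k-1}$ (with the $O(L)$ norm bound) and $h_k$ (mere existence), and $\pi$ must project onto cycle representatives for $H_k$, not $H_{k-1}$.  Second, the gluing of the local cones is more than ``connectors between the $v_\sigma$'s'': the operators $h_\sigma$ disagree on shared faces, and reconciling them while preserving the chain-homotopy identity and the $O(L)$ bound requires an induction over the coarse skeleta of $X$; the contribution of the gluing is a multiplicative factor in front of $L$ depending on the combinatorics of $X$, not an additive bounded constant as you suggest.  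Your iterated-prism plan for making $h_\sigma$ cellular and integral on a subdivided cube is the right idea; the paper's explicit construction of cube ``spanning trees rel boundary'' (which sweeps cells to the boundary one coordinate direction at a time) is essentially the same mechanism and could be adapted to furnish the $h_\sigma$ you need.
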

We will start by proving the much easier version over a field; in the rest of
the section $\mathbb{F}$ will denote $\mathbb{Q}$ or $\mathbb{R}$.  Then we will
demonstrate how to find an integral filling cochain near a rational or real one.
\begin{lem} \label{lem0F}
  Let $X$ be a finite simplicial complex equipped with the standard metric, and
  let $X_L$ be an $L$-regular subdivision of $X$.  Then for any $k$, there is a
  constant $K=K(X,k)$ such that for any simplicial coboundary $\omega \in
  C^k(X_L;\mathbb{F})$, there is an $\alpha \in C^{k-1}(X_L;\mathbb{F})$ with
  $d\alpha=\omega$ such that $\lVert\alpha\rVert_\infty \leq
  KL\lVert\omega\rVert_\infty$.
\end{lem}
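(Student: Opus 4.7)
The plan is to combine a local isoperimetric estimate on a single regularly subdivided simplex $\Delta^n(L)$ with an inductive assembly along the skeleta of $X$.

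First, I would establish a local estimate: for each $n$ and each $1 \leq k \leq n$, there is a constant $C(n)$ such that for any cocycle $\omega \in C^k(\Delta^n(L); \mathbb{F})$ and any compatible $\alpha_0 \in C^{k-1}(\partial \Delta^n(L); \mathbb{F})$ with $d\alpha_0 = \omega|_{\partial \Delta^n(L)}$, there exists $\alpha \in C^{k-1}(\Delta^n(L); \mathbb{F})$ with $\alpha|_{\partial \Delta^n(L)} = \alpha_0$, $d\alpha = \omega$, and
\[
\|\alpha\|_\infty \leq C(n)\bigl(L\|\omega\|_\infty + \|\alpha_0\|_\infty\bigr).
\]
The idea is to build an explicit cochain contraction on $\Delta^n(L)$ of $\ell^\infty$-operator norm $O(L)$. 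The regular subdivision schemes discussed in the Preliminaries decompose $\Delta^n$ into $n+1$ sub-cubes of the form $[0,L]^n$; on each such cube, a Künneth--Eilenberg--Zilber iteration of the one-dimensional integration $(h\omega)(v_i) = \sum_{j \leq i}\omega(e_j)$, which itself has operator norm $\leq L$, yields a cochain contraction of norm at most $nL$, because in $\ell^\infty$ the contributions from integrations in distinct coordinate directions add rather than multiply. These cube-wise contractions can be glued along the interior codimension-one faces of the decomposition by a Mayer--Vietoris-style patching, producing a single contraction on $\Delta^n(L)$ whose norm is still $O(L)$, with constants depending only on the finite list of isometry classes supplied by the scheme.

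Second, with the local estimate in hand, I would assemble $\alpha$ globally by induction on the dimension $i$ of the skeleton $X^{(i)}$. At stage $i$, standard obstruction theory identifies the obstruction to extending the already-constructed $\alpha$ across each $i$-simplex $\sigma$ of $X$; because $\omega$ is a coboundary on $X_L$ and the natural map $H^*(X_L;\mathbb{F}) \to H^*(X;\mathbb{F})$ is an isomorphism (they compute the cohomology of the same underlying space), the global obstruction class vanishes on $X$. One may therefore add a correction obtained by filling a cochain on the fixed finite complex $X$ — whose cost is bounded independent of $L$ — to make the obstruction locally zero on each $\sigma$, and then the local estimate extends $\alpha$ across $\sigma_L$. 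Writing $M_i$ for the resulting $\ell^\infty$-bound on $\alpha$ after extension to $(X^{(i)})_L$, one obtains the recurrence
\[
M_i \leq C(n)L\|\omega\|_\infty + C(n)M_{i-1},
\]
and unrolling across $i = 0, 1, \ldots, \dim X$ produces the final bound $M_{\dim X} \leq K(X,k) \cdot L\|\omega\|_\infty$, with $K(X,k)$ depending only on $\dim X$ and the $C(n)$.

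The main obstacle is the local estimate, specifically obtaining a contraction of norm linear — rather than higher polynomial — in $L$. A naive cone from a single vertex of $\Delta^n(L)$ fails for two reasons: the combinatorial distance from the vertex to an arbitrary point in the subdivision is $\Theta(L)$, and straight-line cones are not simplicial in the regular scheme. The product/Künneth construction sketched above is the remedy, exploiting both the sup-norm measurement (so distinct coordinate directions contribute additively) and the product-like structure of the regular subdivision, so that the diameter of $\Delta^n(L)$ rather than its cell count governs the filling constant.
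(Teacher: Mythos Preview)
Your route is genuinely different from the paper's. The paper never builds a contraction on $\Delta^n(L)$ or assembles along skeleta; instead it proves the dual $\ell^1$ statement for \emph{chains} using the Federer--Fleming deformation theorem (push the boundary $b$ to a simplicial cycle in $X$, fill there with cost depending only on $X$, then deform the Lipschitz filling back to a simplicial chain in $X_L$), and then converts this into the $\ell^\infty$ cochain statement by a Hahn--Banach duality argument showing that the optimal constants in the two inequalities coincide. That argument is insensitive to the particular regular scheme and needs no boundary-matching.

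There is, however, a real gap in your plan. The recurrence $M_i \leq C(n)L\lVert\omega\rVert_\infty + C(n)M_{i-1}$ requires the \emph{relative} local estimate in the precise form you wrote, with the coefficient on $\lVert\alpha_0\rVert_\infty$ independent of $L$. But an absolute contraction $h$ of norm $O(L)$ on $\Delta^i(L)$ does not deliver this. From $h$ you get $\tilde\alpha=h\omega$ with $d\tilde\alpha=\omega$ and $\lVert\tilde\alpha\rVert_\infty\le CL\lVert\omega\rVert_\infty$; to force $\alpha|_{\partial}=\alpha_0$ you must then correct by an extension of the $(k-1)$-cocycle $\alpha_0-\tilde\alpha|_\partial$ on $\partial\Delta^i(L)\cong S^{i-1}_L$. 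For $i>k$ this cocycle is exact, but producing a primitive on $S^{i-1}_L$ costs a coisoperimetric factor of $L$, so the extension has norm $\lesssim L(\lVert\alpha_0\rVert_\infty+L\lVert\omega\rVert_\infty)$. The recurrence then becomes $M_i\lesssim L\lVert\omega\rVert_\infty+L\,M_{i-1}$, which unrolls to $L^{\,\dim X-k+1}\lVert\omega\rVert_\infty$ rather than $L\lVert\omega\rVert_\infty$. (Your K\"unneth contraction on a cube is likewise not boundary-compatible: integrating in the first coordinate leaves nonzero values on the far face $\{L\}\times[0,L]^{n-1}$.)

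To rescue the scheme you would need an additional ingredient: a bounded linear extension $E:C^{k-1}(\partial\Delta^i(L))\to C^{k-1}(\Delta^i(L))$ with $\lVert E\rVert=O(1)$ and $\lVert dE\alpha_0\rVert_\infty\lesssim\lVert\omega\rVert_\infty+\lVert\alpha_0\rVert_\infty/L$ (a discrete ``harmonic'' or linear-interpolation extension), together with a \emph{relative} contraction of norm $O(L)$ on $(\Delta^i(L),\partial\Delta^i(L))$. Both are plausible for the cubical scheme but neither follows from what you sketched; the Mayer--Vietoris patching and the passage from cubes to $\Delta^n(L)$ would also have to be made compatible with the boundary. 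This is exactly the difficulty that the paper's chain-side Federer--Fleming argument plus duality sidesteps.
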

\begin{proof}
  We first show a similar isoperimetric inequality, and then demonstrate that it
  is equivalent to the coisoperimetric version.
  \begin{lem} \label{lem1}
    There is a constant $K=K(X,k)$ such that boundaries $b \in
    C_{k-1}(X_L;\mathbb{F})$ of simplicial volume $V$ bound chains of simplicial
    volume at most $KLV$.
  \end{lem}
  \begin{proof}
    There are two ways we can measure the volume of a simplicial $i$-chain in
    $X_L$.  The first, \emph{simplicial volume}, is given by assigning every
    simplex volume 1, i.e.
    $$\vol\left(\sum \alpha_ip_i\right)=\sum \lvert\alpha_i\rvert.$$
    Alternatively, we can measure the $i$-\emph{mass} of chains: the mass of a
    simplex $p$ is its Riemannian $i$-volume, and in general
    $$\mass\left(\sum \alpha_ip_i\right)=\sum \lvert\alpha_i\rvert\vol_i(p_i).$$
    Thus there are constants $K_i$ and $K^\prime_i$, depending on the choice of
    subdivision scheme, such that for every $i$-chain $c$,
    $$K_iL^i\mass c \leq \vol c \leq K^\prime_iL^i.$$
    Therefore to prove the lemma it suffices to show that a boundary whose
    $(k-1)$-mass in $X$ is $V$ bounds a chain whose $k$-mass is at most $KV$.

    Our main tool here is the Federer--Fleming deformation theorem, a powerful
    result in geometric measure theory which allows very general chains to be
    deformed to simplicial ones in a controlled way.  One proves this result by
    shining a light from the right spot inside each simplex so that the
    resulting shadow on the boundary of the simplex is not too large.  By
    iterating this procedure on simplices of each dimension between $n$ and
    $k+1$, we eventually end up with a shadow in the $k$-skeleton, which is the
    desired simplicial chain.  Federer and Fleming's original version
    \cite[Thm.~5.5]{FF} was based on deformation to the standard cubical lattice
    in $\mathbb{R}^n$.  However, everything in their proof, except for the
    precise constants, translates to simplicial complexes.  (See
    \cite[Thm.~10.3.3]{ECHLP} for a proof of a slightly narrower analogue in the
    case of triangulated manifolds, which however also applies to any simplicial
    complex.)

    Federer and Fleming's theorem works for normal currents.  To avoid this
    rather technical concept, we state the result for Lipschitz chains, that is,
    singular chains whose simplices are Lipschitz.
    \begin{thm*}[Federer--Fleming deformation theorem]
      Let $W$ be an $n$-dimensional simplicial complex with the standard metric
      on each simplex.  There is a constant $\rho(k,n)$ such that the following
      holds.  Let $T$ be a Lipschitz $k$-chain in $W$ with coefficients in
      $\mathbb{F}$.  Then we can write $T=P+Q+\partial S$, where
      \begin{enumerate}
      \item $\mass(P) \leq \rho(k,n)(\mass(T)+\mass(\partial T))$;
      \item $\mass(Q) \leq \rho(k,n)\mass(\partial T)$;
      \item $\mass(S) \leq \rho(k,n)\mass(T)$;
      \item $P$ can be expressed as an $\mathbb{F}$-linear combination of
        $k$-simplices of $W$.
      \item If $\partial T$ can already be expressed as a combination of
        $(k-1)$-simplices of $W$ (for example, if $T$ is a cycle), then $Q=0$
        and
        $$\mass(P) \leq \rho(k,n)(\mass(T).$$
      \end{enumerate}
    \end{thm*}
    Now suppose that $W$ is given a metric $d_W$ whose simplices are not
    standard, but such that the identity map
    $\iota:(W,d_{\mathrm{std}}) \to (W,d_W)$ satisfies
    $$\lambda_1d(x,y) \leq d(\iota(x),\iota(y)) \leq \lambda_2d(x,y)$$
    for all $x,y \in W$.  When mass is measured with respect to $d_W$, the
    bounds in the theorem become
    \begin{enumerate}
    \item $\displaystyle{\mass(P) \leq
      \rho(k,n)\left(\frac{\lambda_2^k}{\lambda_1^k}\mass(T)
      +\frac{\lambda_2^{k-1}}{\lambda_1^k}\mass(\partial T)\right)}$;
    \item $\displaystyle{\mass(Q) \leq
      \frac{\lambda_2^{k-1}}{\lambda_1^k}\rho(k,n)\mass(\partial T)}$;
    \item $\displaystyle{\mass(S) \leq
      \frac{\lambda_2^{k+1}}{\lambda_1^k}\rho(k,n)\mass(T)}$.
    \end{enumerate}
    We apply the theorem twice.  First, we apply it to $b$ as a Lipschitz cycle
    in $X$, to show that it is homologous to a $(k-1)$-cycle $P \in
    C_{k-1}(X;\mathbb{F})$ of volume $\leq C(k)V$ via a Lipschitz $k$-chain $S$
    of volume $\leq C(k)V$.  Next, we apply it to $S$ as a $k$-chain in $X_L$.
    Notice that the ratio $\lambda_2/\lambda_1$ is bounded independent of $L$
    for a regular subdivision; therefore, $S$ deforms rel boundary to a chain in
    $C_k(X_L;\mathbb{F})$ of volume $\leq C(k)C^\prime(k)V$, where $C^\prime$
    depends on the subdivision scheme.  Finally, $P$ bounds a chain in $X$ of
    volume $\leq C^\dagger C(k)V$, where $C^\dagger$ depends only on the geometry
    of $X$.  Thus we can set $K=(C^\dagger+C^\prime)C$.
  \end{proof}
  \begin{lem}
    Let $X$ be a finite simplicial complex.  Then the following are equivalent
    for any constant $C$:
    \begin{enumerate}
    \item any boundary $\sigma \in C_{k-1}(X;\mathbb{F})$ has a filling $\tau
      \in C_k(X;\mathbb{F})$ with $\vol\tau \leq C\vol\sigma$;
    \item any coboundary $\omega \in C^k(X;\mathbb{F})$ is the coboundary of
      some $\alpha \in C^{k-1}(X;\mathbb{F})$ with $\lVert\alpha\rVert_\infty
      \leq C\lVert\omega\rVert_\infty$.
    \end{enumerate}
  \end{lem}
  The authors would like to thank Alexander Nabutovsky and Vitali Kapovitch for
  pointing out this simplified proof.
  \begin{proof}
    The cochain complex is dual to the chain complex, and the $L_\infty$-norm on
    cochains is dual to the volume norm on chains.  So consider the general
    situation of a linear transformation between two normed vector spaces
    $T:(V,\lVert\cdot\rVert_V) \to (W,\lVert\cdot\rVert_W)$, and let $C(T)$ be
    the operator norm of the transformation
    $$\bar T^{-1}: (\img T,\lVert\cdot\rVert_W) \to
    (V/\ker T,\lVert\cdot\rVert_{\bar V}),$$
    where the norm of an equivalence class $\bar v \in V/\ker T$ is given by
    $\lVert \bar v \rVert_{\bar V}=\min_{v \in \bar v} \lVert v \rVert_V$.  When
    $T$ is the boundary operator on $C_k(X;\mathbb{F})$, $C(T)$ is exactly the
    minimal constant $C$ in condition (1).  Hence this is also the operator norm
    of the dual transformation $(\bar T^{-1})^*:\img T^* \to W^*/\ker T^*$.  It
    remains to investigate the dual norms on these spaces.

    By the Hahn--Banach theorem, any operator on $\img T$ extends to an operator
    of the same norm on all of $W$.  Hence the dual norm of
    $\lVert\cdot\rVert_W|_{\img T}$ is exactly the norm
    $\lVert \overline{\ph} \rVert_{\overline{W^*}}=\min_{\ph \in \overline{\ph}}
    \lVert \ph \rVert_{W^*}$ on $W^*/\ker T^*$, and similarly the dual norm of
    $\lVert\cdot\rVert_{\bar V}$ is $\lVert\cdot\rVert_{V^*}|_{\img T^*}$.
    Therefore the operator norm of $(\bar T^{-1})^*$ is the minimal constant of
    condition (2).
  \end{proof}
  Putting these together, we complete the proof of the rational and real
  coisoperimetry lemma.
\end{proof}
Now we introduce the ingredients for proving the integral version.
\begin{defn}
  A \emph{$k$-spanning tree} of a simplicial complex $X$ is a $k$-dimensional
  subcomplex $T$ which contains $X^{(k-1)}$, such that the induced map
  $H_{k-1}(T;\mathbb{Q}) \to H_{k-1}(X;\mathbb{Q})$ is an isomorphism and
  $H_k(T;\mathbb{Q})=0$.  A \emph{$k$-wrapping tree} of $X$ is a $k$-dimensional
  subcomplex $U$ which contains $X^{(k-1)}$ and such that $H_{k-1}(U;\mathbb{Q})
  \to H_{k-1}(X;\mathbb{Q})$ and $H_k(U;\mathbb{Q}) \to H_k(X;\mathbb{Q})$ are
  both isomorphisms.
\end{defn}
\begin{lem}
  Every simplicial complex $X$ has a $k$-spanning tree and a $k$-wrapping tree.
\end{lem}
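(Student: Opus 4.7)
The plan is to construct both trees greedily, over $\mathbb{Q}$: first the spanning tree $T$, then adjoin a handful of $k$-simplices to obtain the wrapping tree $U$.

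For the $k$-spanning tree, I would enumerate the $k$-simplices of $X$ as $\sigma_1,\dots,\sigma_m$ and build a nested sequence $T_0\subseteq T_1\subseteq\cdots\subseteq T_m$ starting with $T_0=X^{(k-1)}$. At step $i$, adjoin $\sigma_i$ to $T_{i-1}$ if $\partial\sigma_i\notin B_{k-1}(T_{i-1};\mathbb{Q})$, and otherwise set $T_i=T_{i-1}$; put $T=T_m$. No step creates a $k$-cycle: any cycle in $C_k(T_i;\mathbb{Q})$ may be written as $c+a\sigma_i$ with $c\in C_k(T_{i-1};\mathbb{Q})$, and if $a\neq 0$ then $\partial\sigma_i=-a^{-1}\partial c\in B_{k-1}(T_{i-1};\mathbb{Q})$, contradicting the selection rule, so by induction $Z_k(T;\mathbb{Q})=0$ and hence $H_k(T;\mathbb{Q})=0$. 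Since $Z_{k-1}(T)=Z_{k-1}(X)$ (the $(k-1)$-skeletons agree), the map $H_{k-1}(T;\mathbb{Q})\to H_{k-1}(X;\mathbb{Q})$ is an isomorphism as soon as $B_{k-1}(T;\mathbb{Q})=B_{k-1}(X;\mathbb{Q})$, and this is immediate: every $\partial\sigma_i$ was either added to $T$ or was already a boundary in $T_{i-1}\subseteq T$.

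To extend $T$ to a wrapping tree, for each $k$-simplex $\sigma\notin T$ choose $c_\sigma\in C_k(T;\mathbb{Q})$ with $\partial c_\sigma=\partial\sigma$, which exists by the previous step, and form $z_\sigma=\sigma-c_\sigma\in Z_k(X;\mathbb{Q})$. A direct splitting argument — write any $z\in Z_k(X;\mathbb{Q})$ as its $T$-part plus its $(X\setminus T)$-part $\sum a_\sigma\sigma$ and observe that $z-\sum a_\sigma z_\sigma\in Z_k(T;\mathbb{Q})=0$, while linear independence is visible on the $(X\setminus T)$-part — shows that $\{z_\sigma:\sigma\notin T\}$ is a $\mathbb{Q}$-basis of $Z_k(X;\mathbb{Q})$. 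By the Steinitz exchange lemma, select $S\subseteq\{\sigma:\sigma\notin T\}$ so that $\{[z_\sigma]:\sigma\in S\}$ is a basis of $H_k(X;\mathbb{Q})$, and put $U=T\cup\bigcup_{\sigma\in S}\sigma$. Then $U\supseteq X^{(k-1)}$ and $B_{k-1}(U;\mathbb{Q})\supseteq B_{k-1}(T;\mathbb{Q})=B_{k-1}(X;\mathbb{Q})$, so the $H_{k-1}$ condition is inherited from $T$. The same splitting applied inside $U$ gives $Z_k(U;\mathbb{Q})=\operatorname{span}\{z_\sigma:\sigma\in S\}$, and by the choice of $S$ the natural map carries $H_k(U;\mathbb{Q})=Z_k(U;\mathbb{Q})$ isomorphically onto $H_k(X;\mathbb{Q})$.

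There is no serious obstacle here: the content of the lemma is that the $k$-simplices of $X$ form a matroid whose independent sets are those with $\mathbb{Q}$-linearly independent boundaries, and a spanning tree is a basis of that matroid while a wrapping tree augments it by representatives of $H_k(X;\mathbb{Q})$. The only detail that demands attention is tracking the two vector-space splittings of $Z_k$ (one for $X$, one for $U$) carefully enough to see that the extra simplices in $S$ supply exactly $H_k(X;\mathbb{Q})$ without disturbing the equality $B_{k-1}(U;\mathbb{Q})=B_{k-1}(X;\mathbb{Q})$ already secured by $T$.
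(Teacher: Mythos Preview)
Your proof is correct and takes essentially the same approach as the paper: a greedy construction of the spanning tree starting from $X^{(k-1)}$, followed by adjoining a set of $k$-simplices outside $T$ whose classes form a basis of $H_k$. The paper phrases the second step in terms of $H_k(X,T;\mathbb{Q})$ rather than your absolute cycles $z_\sigma$, but since $H_k(X;\mathbb{Q})\cong H_k(X,T;\mathbb{Q})$ via $[z_\sigma]\mapsto[\sigma]$ (the spanning tree properties force this), the two selections are the same; your version simply spells out the verifications the paper leaves implicit.
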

\begin{proof}
  A $k$-spanning tree for any $X$ can be constucted greedily starting from
  $X^{(k-1)}$.  At each step, we find a $k$-simplex $c$ in $X$ such that
  $\partial c$ represents a nonzero class in $H_{k-1}(T;\mathbb{Q})$ and add it
  to $T$.  Once there are no such simplices left, $H_{k-1}(T;\mathbb{Q}) \to
  H_{k-1}(X;\mathbb{Q})$ is an isomorphism.  By construction, $T$ has no rational
  $k$-cycles.

  Notice that every $k$-simplex of $X$ outside $T$ is a cycle in
  $C_k(X,T;\mathbb{Q})$.  To build a $k$-wrapping tree from a $k$-spanning tree,
  we may choose a basis for $H_k(X,T;\mathbb{Q})$ from among the simplices and
  add it to the tree.
\end{proof}
Informally speaking, a $k$-spanning tree should be thought of as the least
subcomplex $T$ so that every $k$-simplex outside $T$ is a cycle mod $T$; a
$k$-wrapping tree is the least subcomplex $U$ so that every $k$-simplex outside
$U$ is a boundary mod $U$.  In both cases, the minimality means that there is a
unique ``completion'' for a $k$-simplex $q$, i.e.~a chain $c$ supported in $T$
(respectively, $U$) so that $c+q$ is a cycle (resp., boundary.)

Such spanning trees have been previously studied by Kalai \cite{Kalai} and
Duval--Klivans--Martin \cite{DKM} and \cite{DKM2} in the case where $k$ is the
dimension of the complex.  In that case, the $k$-simplices not contained in a
spanning tree $T$ form a basis for $H_k(X,T;\mathbb{Q})$ (and a $k$-wrapping
tree is simply the whole complex.)  When $X$ contains simplices in dimension
$k+1$, however, there may be relations between the simplices when viewed as
cycles in $X$ modulo $T$.  The next definition attempts to quantify the extent
to which such relations constrain the behavior of cocycles in the pair $(X,T)$.
\begin{defn}
  Let $T$ be a $k$-spanning tree of $X$.  Consider the set $\mathcal{A}$ of
  vectors in $H_k(X,T;\mathbb{Q})$ which are images of $k$-simplices of $X$.  We
  define the \emph{gnarledness}
  $$G(T)=\min \left\{\max_{a \in \mathcal{A}} \lVert a \rVert_1: \text{bases
    $\mathcal{B}$ for $H_k(X,T;\mathbb{Q})$ such that }\mathcal{A} \subset
  \mathbb{Z}\mathcal{B}\right\}.$$
  We say that $T$ is \emph{$G(T)$-gnarled}; we say a basis is \emph{optimal} if
  $\max_{a \in \mathcal{A}} \lVert a \rVert_1$ is minimal in it.
\end{defn}
The gnarledness measures the extent to which certain simplices are homologically
``larger'' than others.  For example, consider a 2-dimensional simplicial
complex which is homeomorphic to the mapping telescope of a degree two self-map
of $S^1$,
$$X=S^1 \times [0,1]/(x,1) \sim (-x,1).$$
Let's say we take a one-dimensional spanning tree $T$ which includes all but one
of the simplices of both $S^1 \times \{0\}$ and $S^1 \times \{1\}$; let $e_0$
and $e_1$, respectively, be the relevant 1-simplices in $X \setminus T$.  Then
in $H_1(X,T;\mathbb{Q}) \cong \mathbb{Q}$, $[e_0]=2$ and $[e_1]=1$.  For any
basis for $H_1(X,T;\mathbb{Q})$ in which $e_1$ is a lattice point,
$\lVert e_0 \rVert_1 \geq 2$, so the tree $T$ is at least 2-gnarled.  Indeed,
the same will happen for any spanning tree of this complex.
\begin{lem} \label{lem:sub}
  The cubical and edgewise $L$-regular subdivisions of $X$ both admit
  $k$-spanning trees which are at most $C(X)$-gnarled; the gnarledness is
  bounded independent of $L$.
\end{lem}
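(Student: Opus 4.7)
The plan is to construct $T$ as a lift of a fixed $k$-spanning tree $T_X$ of $X$ to the subdivision $X_L$, and then to use the natural isomorphism $H_k(X_L, T;\mathbb{Q}) \cong H_k(X, T_X;\mathbb{Q})$ to transfer an optimal basis from $T_X$ to $T$. The constant $C(X)$ in the conclusion will simply be the gnarledness $G(T_X)$.

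For the construction, I first fix a $k$-spanning tree $T_X$ with optimal basis $\mathcal{B}_X$ realizing $G(T_X) = C_0$ (all depending only on $X$). I then define subcomplexes $T_\sigma \subset \sigma_L$ by induction on $m = \dim \sigma \geq k$. When $m = k$, set $T_\sigma = \sigma_L$ if $\sigma \in T_X$ and $T_\sigma = \sigma_L \setminus \{q_\sigma\}$ otherwise, where $q_\sigma$ is a designated interior $k$-simplex chosen uniformly within its isometry class. When $m > k$, the inductive step has already prescribed $T_\sigma \cap \partial\sigma_L$; I extend it to $\sigma_L$ by greedily adding interior $k$-simplices that kill $(k-1)$-cycles until the result is a $k$-spanning tree of $\sigma_L$. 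The greedy extension succeeds because $\sigma_L \simeq D^m$ is contractible, and the choice can be made uniformly in $L$: regularity makes $\sigma_L$ independent of $L$ up to isometry, and the boundary pattern depends only on which $k$-faces of $\partial\sigma$ lie in $T_X$, giving finitely many configurations. Set $T = \bigcup_\sigma T_\sigma$. An inductive Mayer--Vietoris argument over the cells of $X$ then verifies that $T$ is a $k$-spanning tree of $X_L$: the pieces $T_\sigma$ with $\dim\sigma > k$ contribute trivially to $H_k$ and $H_{k-1}$, while the $k$-dimensional pieces faithfully reproduce the homological behavior of $T_X$.

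For the gnarledness bound, the long exact sequence of $(X_L, T)$ together with the subdivision chain equivalence gives a canonical isomorphism $\Phi : H_k(X_L, T;\mathbb{Q}) \to H_k(X, T_X;\mathbb{Q})$, and I take $\mathcal{B} = \Phi^{-1}(\mathcal{B}_X)$. Every $k$-simplex $q \notin T$ is of one of two types. If $q$ lies in the interior of some $\sigma_L$ with $\dim\sigma > k$, then a completion of $q$ to a cycle can be chosen entirely in $T_\sigma$, which is supported in the contractible complex $\sigma_L$, so the completed cycle is null-homologous and $[q] = 0$ in $H_k(X_L, T;\mathbb{Q})$. Otherwise $q = q_\tau$ for some $\tau \in X^{(k)} \setminus T_X$; writing $d_X \in C_k(T_X)$ for a chain with $\partial d_X = -\partial\tau$ (which exists because $[\partial\tau] = 0$ in $H_{k-1}(T_X;\mathbb{Q})$) and $s$ for the subdivision chain map, the cycle $q_\tau + c$ with $c = (\tau_L - q_\tau) + s(d_X) \in C_k(T)$ equals $s(\tau + d_X)$, whence $\Phi([q_\tau]) = [\tau] \in H_k(X, T_X;\mathbb{Q})$, so the $\ell^1$-norm of $[q_\tau]$ in $\mathcal{B}$ matches that of $[\tau]$ in $\mathcal{B}_X$. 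Either way the coefficient is at most $C_0$, giving $G(T) \leq C_0$ independently of $L$.

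The hard part will be carrying out the inductive construction for $m > k$: for each boundary pattern on $\partial\sigma_L$ inherited from lower-dimensional choices, I must verify that an extension to a $k$-spanning tree of $\sigma_L$ exists and can be selected uniformly across $L$. This reduces to an elementary combinatorial statement about the explicit cubical or edgewise subdivision of $\Delta^m$, relying on the vanishing of $H_{k-1}$ of the $k$-skeleton of the contractible complex $\sigma_L$ (so the greedy algorithm terminates) and on the regularity of the subdivision (which bounds the boundary patterns to a finite set depending only on $\dim\sigma$ and the trace of $T_X$ on the $k$-faces of $\partial\sigma$).
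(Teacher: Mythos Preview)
Your overall strategy is sound and genuinely different from the paper's: you build $T$ by extending a fixed spanning tree $T_X$ greedily across higher-dimensional cells, and your gnarledness computation in the last paragraph is correct and even yields the sharper bound $G(T)\le G(T_X)$ (the paper gets $G(T_L)\le G(T_X)\cdot 2^{n-k}n!/k!$).  The paper instead proves an explicit geometric lemma (Lemma~\ref{lem:cube_tree}) constructing a very particular ``spanning tree rel boundary'' on each subdivided simplex, with the property that every $k$-cell outside it pushes to a bounded number of boxes on the boundary; this explicit construction replaces your greedy step and is where the extra combinatorial factor enters.

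The gap is in the greedy step itself, which you correctly flag as ``the hard part'' but do not actually carry out.  For the greedy extension on $\sigma_L$ (with $\dim\sigma>k$) to produce a $k$-spanning tree, two things must hold: the starting complex $T_0=T\cap\partial\sigma_L\cup(\sigma_L)^{(k-1)}$ must have $H_k(T_0)=0$ (otherwise adding $k$-simplices cannot kill the cycles), and every class in $H_{k-1}(T_0)$ must be killable by \emph{interior} $k$-simplices alone---equivalently, $H_{k-1}\bigl((\sigma_L)^{(k)}\setminus\{q_\tau\}\bigr)\cong H_{k-1}(\sigma_L)$.  Neither is automatic.  The second condition in particular can fail for small $L$: when $L=1$ and a $(k{+}1)$-simplex $\sigma$ has more than one $k$-face outside $T_X$, there are simply no interior $k$-simplices to add, so no spanning tree of $\sigma$ satisfying your boundary constraints exists.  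For larger $L$ the condition does hold (each removed $q_\tau$ has an adjacent interior $(k{+}1)$-simplex whose remaining faces provide an alternative filling of $\partial q_\tau$), but this needs to be argued and depends on $q_\tau$ being chosen away from the corners.  Finally, your remark that ``regularity makes $\sigma_L$ independent of $L$ up to isometry'' is not correct---regularity only bounds the isometry classes of \emph{individual} simplices, while $\sigma_L$ itself has $\sim L^m$ cells---but this is inessential, since your gnarledness bound does not actually use any uniformity in the construction of $T_\sigma$.
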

We will actually show this for grids in a cube complex.  It is routine to modify
this proof to work for the cubical subdivision of a simplicial complex; a
similar construction works for the edgewise subdivision, since it consists of a
grid of subspaces parallel to the faces which is then subdivided in a fixed way
depending on dimension.

We first show that the subdivision of a cube has a ``$k$-spanning tree rel
boundary'' with good geometric properties.  To be precise:
\begin{lem} \label{lem:cube_tree}
  Let $K=I^n$ be cubulated by a grid of side length $1/r$, and let $k \leq n$.
  We refer to
  \begin{itemize}
  \item \emph{cells}, i.e.~faces of the cubulation;
  \item \emph{faces}, i.e.~subcomplexes corresponding to faces of the unit cube;
  \item and \emph{boxes}, i.e.~subcomplexes which are products of subintervals.
  \end{itemize}
  Then there is a $k$-subcomplex $T \subset K$ which contains $K^{(k-1)}$ with
  the following properties:
  \begin{enumerate}
  \item $T \cap \partial K=(\partial K)^{(k-1)}$.
  \item $T$ deformation retracts to $(\partial K)^{(k-1)}$.
  \item Every $k$-cell of $K \setminus T$ is homologous rel $T$ to a chain in
    $\partial K$ whose intersection with each $(n-1)$-face is a box.
  \item More generally, every $k$-dimensional box in $K$ is homologous rel $T$
    to a chain in $\partial K$ whose intersection with each $(n-1)$-face is a
    box.
  \end{enumerate}
\end{lem}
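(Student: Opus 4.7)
My plan is to construct $T$ explicitly via an acyclic discrete Morse collapse toward $\partial K$, then verify the four properties in turn.

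Call a cell of $K$ \emph{interior} if each of its fixed coordinates takes a value strictly between $0$ and $1$. To each interior $(k-1)$-cell $c$ I would assign a companion interior $k$-cell $\pi(c) \supset c$ by a layered rule: find the largest coordinate index $j$ such that $x_j$ is fixed in $c$, and let $\pi(c)$ be the $k$-cell obtained by making $x_j$ range over $[a_j - 1/r, a_j]$, where $a_j$ is the fixed value of $x_j$ in $c$. When this naive rule produces a collision (two distinct $(k-1)$-cells mapping to the same $k$-cell), I would break ties by switching to the next-largest direction or by a backtracking reassignment. Define $T = K^{(k-1)} \cup \{\pi(c) : c \text{ an interior } (k-1)\text{-cell}\}$. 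Property (1) is then immediate, since each $\pi(c)$ has all fixed coordinates in $(0,1)$ and hence is disjoint from $\partial K$. For (2), I extend the pairing $c \leftrightarrow \pi(c)$ to interior cells of all dimensions $\le k-1$ by the same rule; this yields an acyclic discrete Morse matching on $T$ whose critical cells are precisely $(\partial K)^{(k-1)}$, with acyclicity witnessed by a lex-ordered Morse function on the top-corner vertices of cells.

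For properties (3) and (4), I would iteratively push any $k$-box $B$ toward $\partial K$ via a prism construction. If $B$ lies in a horizontal slice $x_n = a_n$, the $(k+1)$-prism $B \times [0, a_n]$ in the $x_n$-direction has boundary $B - B|_{x_n = 0} \pm (\partial B) \times [0, a_n]$; the shadow $B|_{x_n=0}$ is a box on the bottom $(n-1)$-face, and each summand of the side prism $(\partial B) \times [0, a_n]$ is either interior (and lies in $T$, being a vertical $k$-cell with $x_n$-range strictly inside $[0,1]$) or else lies on a side $(n-1)$-face as a box in that face. If $x_n$ is a free coordinate of $B$, the same argument runs with the largest fixed coordinate of $B$ in place of $x_n$; such a coordinate exists because $k \le n$ forces $B$ to have at least one fixed coordinate. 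Iterating this push across all coordinates reduces $B$ to a chain on $\partial K$ whose intersection with each $(n-1)$-face is a box, modulo chains in $T$ and boundaries of $(k+1)$-chains.

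The main obstacle is making the companion assignment $\pi$ simultaneously injective and acyclic. The naive ``always use the largest-index fixed coordinate'' rule already fails in small cases: for $n=3$, $k=2$, $r=2$, the central interior 1-cell running in the $x_3$-direction and a central interior 1-cell running in an orthogonal direction can both be forced, under their canonical rules, to pair with the same interior 2-cell. Resolving such collisions requires a careful layered rule with tiebreakers, and one must then verify both that the resulting matching is acyclic (giving (2)) and that the chosen $T$-cells contain precisely the interior vertical pieces needed to absorb the side-prism terms appearing in the push-to-boundary argument for (3) and (4).
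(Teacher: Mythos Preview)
Your $T$, defined as $K^{(k-1)}$ together with the image of the naive map $\pi$ (ignoring collisions), actually coincides up to a reflection with the paper's construction; so your instinct for the shape of $T$ is right. The difficulties are entirely in the verifications.

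For property (2), your plan to make $\pi$ into an injective matching cannot succeed as stated. The issue is not merely one of collisions to be resolved by tiebreakers: a counting argument shows injectivity is impossible for $k$ near $n$. For $n=3$, $k=3$, $r=2$ there are twelve interior $2$-cells but only eight $3$-cells, so no injective map from the former to the latter exists at all. What this really means is that in any acyclic matching on $T$ with critical set $(\partial K)^{(k-1)}$, many interior $(k-1)$-cells must be paired \emph{downward} with $(k-2)$-cells rather than upward with $k$-cells; your scheme does not provide for this. The paper sidesteps the matching issue entirely by building $T_{n,k}$ inductively from copies of $T_{n-1,k}$ on the horizontal slices $K'\times\{i/r\}$ and of $T_{n-1,k-1}$ on the vertical layers $K'\times(i/r,(i+1)/r)$; the retraction is then just a composite of the inductively known retractions.

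For properties (3) and (4), your one-direction prism push works cleanly only when you push in direction $x_n$. When $x_n$ is free in $B$ and you push in the largest fixed direction $m<n$, the resulting side cells have $x_m$ free but may acquire a fixed coordinate $i>m$, and then the membership criterion for $T$ depends on coordinates above $i$, not on $x_m$. Concretely, for $n=3$, $k=2$, $r=3$ the cell $c$ with $x_1=1/3$ and $x_2,x_3\in[2/3,1]$ is not in $T$, and pushing it in the $x_1$-direction leaves two interior side faces (at $x_2=2/3$ and at $x_3=2/3$) that are also not in $T$. Your proposed iteration does eventually terminate, but along the way it deposits several boxes on the same $(n-1)$-face of $K$, and you would still owe a proof that these always assemble into a single box. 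The paper's remedy is to push in all of the ``top'' free directions and in direction $m$ simultaneously, producing a single $(k+1)$-box whose boundary meets each $(n-1)$-face in at most one box and whose interior boundary cells lie in $T$ by a direct check of the criterion.
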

Suppose now that we equip every face of $K$ in dimensions $k \leq i \leq n$ with
subcomplexes satisfying these properties, and let $T^\prime$ be the union of all
these subcomplexes.  Then by induction using property (4), any $k$-cell of
$K \setminus T^\prime$ is homologous rel $T^\prime$ to a union of at most
$2^{n-k}n!/k!$ boxes in the $k$-faces of $K$.  In turn, by property (2), each of
these boxes has at most one cell outside $T^\prime$.  Therefore any $k$-cell is
homologous rel $T^\prime$ to a sum of at most $2^{n-k}n!/k!$ cells in the
$k$-faces.  This is the property that we use to prove Lemma \ref{lem:sub}.
\begin{proof}
  \begin{figure} 
    \centering
    \begin{tikzpicture}
      \tikzstyle{chainfill}=[fill=gray!40];
      \tikzstyle{dott}=[circle,fill=black,scale=0.4];
      \foreach \x in {0,1,2,3} {
        \draw[dashed] (0,\x+1) -- (3,\x+1);
        \draw[dashed] (\x,1) -- (\x,4);
        \node[dott] (l\x) at (0,\x+1) {};
        \node[dott] (r\x) at (3,\x+1) {};
      }
      \node[dott] (b1) at (1,1) {};
      \node[dott] (b2) at (2,1) {};
      \node[dott] (t1) at (1,4) {};
      \node[dott] (t2) at (2,4) {};
      \node[dott] (m1) at (1,2) {};
      \node[dott] (m2) at (1,3) {};
      \draw[very thick] (m1) -- (r1) (m2) -- (r2);
      \newcommand{\oo}{7.7}
      \newcommand{\ya}{-0.9}
      \newcommand{\yb}{0.3}
      \newcommand{\xa}{0.5}
      \newcommand{\xb}{0.55}
      \newcommand{\za}{0}
      \newcommand{\zb}{0.8}
      \foreach \x in {0,1,2,3} {
        \draw (\oo+\x * \xa+3*\ya+3*\za,\x * \xb+3*\yb+3*\zb) --
        (\oo+\x * \xa+3*\ya,\x * \xb+3*\yb) -- (\oo+\x * \xa,\x * \xb);
      }
      \draw (\oo,0) -- (\oo+\xa * 3,\xb * 3) --
      (\oo+\xa * 3+\ya * 3,\xb * 3+\yb * 3) --
      (\oo+\ya * 3,\yb * 3) -- cycle;
      \filldraw[chainfill,thick] (\oo+\xa+2*\ya,\xb+2*\yb) --
      (\oo+3*\xa+2*\ya,3*\xb+2*\yb) -- (\oo+3*\xa+2*\ya+\za,3*\xb+2*\yb+\zb) --
      (\oo+\xa+2*\ya+\za,\xb+2*\yb+\zb) -- cycle;
      \filldraw[chainfill,thick] (\oo+\xa+\ya,\xb+\yb) --
      (\oo+3*\xa+\ya,3*\xb+\yb) -- (\oo+3*\xa+\ya+\za,3*\xb+\yb+\zb) --
      (\oo+\xa+\ya+\za,\xb+\yb+\zb) -- cycle;
      \draw  (\oo+\ya+\za,\zb+\yb) -- (\oo+\za,\zb) -- (\oo+\xa+\za,\xb+\zb);
      \filldraw[chainfill,thick] (\oo+\ya+\za,\zb+\yb) --
      (\oo+\xa+\ya+\za,\zb+\xb+\yb) --
      (\oo+\xa+\za,\xb+\zb) -- (\oo+\xa * 3+\za,\zb+\xb * 3) --
      (\oo+\xa * 3+\ya * 3+\za,\zb+\xb * 3+\yb * 3) --
      (\oo+\ya * 3+\za,\zb+\yb * 3) -- cycle;
      \filldraw[chainfill,thick] (\oo+\xa+2*\ya+\za,\xb+2*\yb+\zb) --
      (\oo+3*\xa+2*\ya+\za,3*\xb+2*\yb+\zb) --
      (\oo+3*\xa+2*\ya+2*\za,3*\xb+2*\yb+2*\zb) --
      (\oo+\xa+2*\ya+2*\za,\xb+2*\yb+2*\zb) -- cycle;
      \filldraw[chainfill,thick] (\oo+\xa+\ya+\za,\xb+\yb+\zb) --
      (\oo+3*\xa+\ya+\za,3*\xb+\yb+\zb) --
      (\oo+3*\xa+\ya+2*\za,3*\xb+\yb+2*\zb) --
      (\oo+\xa+\ya+2*\za,\xb+\yb+2*\zb) -- cycle;
      \draw  (\oo+\ya+2*\za,2*\zb+\yb) -- (\oo+2*\za,2*\zb) --
      (\oo+\xa+2*\za,\xb+2*\zb);
      \filldraw[chainfill,thick] (\oo+\ya+2*\za,2*\zb+\yb) --
      (\oo+\xa+\ya+2*\za,2*\zb+\xb+\yb) --
      (\oo+\xa+2*\za,\xb+2*\zb) -- (\oo+\xa * 3+2*\za,2*\zb+\xb * 3) --
      (\oo+\xa * 3+\ya * 3+2*\za,2*\zb+\xb * 3+\yb * 3) --
      (\oo+\ya * 3+2*\za,2*\zb+\yb * 3) -- cycle;
      \filldraw[chainfill,thick] (\oo+\xa+2*\ya+2*\za,\xb+2*\yb+2*\zb) --
      (\oo+3*\xa+2*\ya+2*\za,3*\xb+2*\yb+2*\zb) --
      (\oo+3*\xa+2*\ya+3*\za,3*\xb+2*\yb+3*\zb) --
      (\oo+\xa+2*\ya+3*\za,\xb+2*\yb+3*\zb) -- cycle;
      \filldraw[chainfill,thick] (\oo+\xa+\ya+2*\za,\xb+\yb+2*\zb) --
      (\oo+3*\xa+\ya+2*\za,3*\xb+\yb+2*\zb) --
      (\oo+3*\xa+\ya+3*\za,3*\xb+\yb+3*\zb) -- (\oo+\xa+\ya+3*\za,\xb+\yb+3*\zb)
      -- cycle;
      \draw (\oo+\za * 3,\zb * 3) --
      (\oo+\xa * 3+\za * 3,\zb * 3+\xb * 3) --
      (\oo+\xa * 3+\ya * 3+\za * 3,\zb * 3+\xb * 3+\yb * 3) --
      (\oo+\ya * 3+\za * 3,\zb * 3+\yb * 3) -- cycle;
      \foreach \x in {0,1,2,3} {
        \draw (\oo+\x * \xa+3*\ya+3*\za,\x * \xb+3*\yb+3*\zb) --
        (\oo+\x * \xa+3*\za,\x * \xb+3*\zb) -- (\oo+\x * \xa,\x * \xb);
      }
      \draw (\oo+\xa+\ya+3*\za,\xb+\yb+3*\zb) -- (\oo+\ya+3*\za,\yb+3*\zb);
      \draw (\oo+\xa+2*\ya+3*\za,\xb+2*\yb+3*\zb) --
      (\oo+2*\ya+3*\za,2*\yb+3*\zb);
      \draw (\oo+\ya+3*\za,\yb+3*\zb) -- (\oo+\ya,\yb) --(\oo+\xa+\ya,\xb+\yb);
      \draw (\oo+2*\ya+3*\za,2*\yb+3*\zb) -- (\oo+2*\ya,2*\yb) --
      (\oo+\xa+2*\ya,\xb+2*\yb);
    \end{tikzpicture}
    
    \caption{
      An illustration of the subcomplex $T$ for $n=2$, $k=1$ and $n=3$, $k=2$.
    } \label{fig:cycle}
  \end{figure}
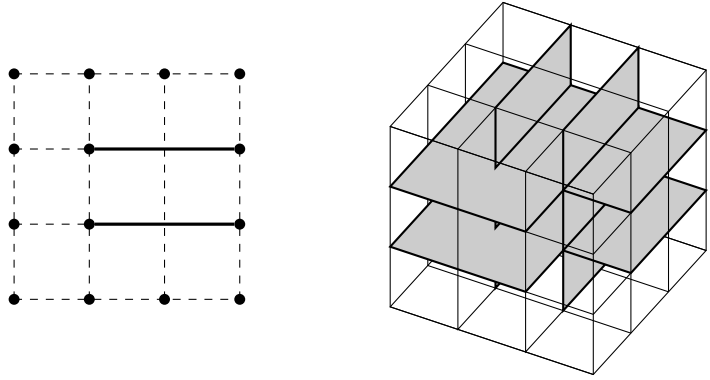

  We construct $T=T_{n,k}$ by induction on $n$ and $k$.  For $k=0$ we can set
  $T_{n,0}=\emptyset$.  Similarly, for $k=n$ we can take $T_{n,n}$ to be $K$ less
  the interior of any one cell---for concreteness, let that be the cell that
  includes the origin.

  Now we construct $T_{n,k}$ for $n>k>0$ by induction on $n$.  Write $K=K^\prime
  \times I$; then for every $0<i<r$, let $T|_{K^\prime \times \{i/r\}}=T_{n-1,k}
  \times \{i/r\}$, and for every $0 \leq i<r$, let
  $$T|_{K^\prime \times (i/r,i+1/r)}=T_{n-1,k-1} \times (i/r,i+1/r).$$
  Finally, we throw in $K^{k-1} \cap K^\prime \times \{0,1\}$.  It remains to
  show that the resulting complex $T=T_{n,k}$ satisfies the lemma.

  It is clear that $T$ contains $K^{(k-1)}$ and that condition (1) holds.
  Moreover, $T$ deformation retracts first to
  $T|_{\partial K \cup K^\prime \times \{i/r: i=1,2,\ldots,r-1\}}$ using a retraction of
  $T_{n-1,k-1}$, and thence to $(\partial K)^{(k-1)}$ via a retraction of each
  layer individually.  This demonstrates (2).  It remains to show that (3) and
  (4) hold.

  In order to do this more easily, we present an alternate rule for determining
  whether a $k$-cell $c$ is contained in $T$.  Showing that it is indeed
  equivalent to the previous definition is tedious but straightforward.  Let $c$
  be a $k$-cell of $K \setminus \partial K$ and let $\mathcal{I}(c) \subset
  \{1,\ldots,n\}$ be the set of directions in which it has positive width.
  Write $\pi_i$ for the projection of $K$ onto its $i$th interval factor, and
  $\ell(c)$ for the greatest integer such that $\{1,\ldots,\ell\} \subset
  \mathcal{I}(c)$.  Then $c$ is in $T$ if and only if $\pi_ic \neq [0,1/r]$ for
  some $i \leq \ell$.  In particular, if $\ell=0$, then $c \notin T$.

  Now let $c$ be a $k$-cell of $K \setminus T$.  If $c \in \partial K$, then it
  already fits the bill, so suppose it is in $K \setminus (T \cup \partial K)$.
  We will argue that $c$ is bounded rel $T \cup \partial K$ by a box $B$ with
  positive width in directions $\mathcal{I}(c) \cup \{\ell(c)+1\}$.
  Specifically, the projections of $B$ onto each interval factor of $K$ are as
  follows:
  \begin{align*}
    \pi_iB &= I & \text{if }1 \leq i \leq \ell(c); \\
    \pi_iB &= [x,1]\text{ where }\pi_ic=\{x\} & \text{if }i = \ell(c)+1; \\
    \pi_iB &= \pi_ic & \text{otherwise.}
  \end{align*}
  By the criterion above, $\partial B \setminus \partial K$ contains only one
  cell which is not in $T$, namely $c$.  Thus $\partial B \cap \partial K$ is
  the chain desired for (3).

  More generally, given a $k$-dimensional box in $K$, one can take the union of
  the $B$'s constructed for each cell in the box.  This gives a solution for
  (4).
\end{proof}
\begin{proof}[Proof of Lemma \ref{lem:sub} for cubulations.]
  Let $X_L$ be the complex obtained by dividing $X$ into grids at scale $1/L$.
  We begin by choosing a $k$-spanning tree $T$ for $X$, then use it to build a
  $k$-spanning tree $T_L$ for $X_L$.  We include all cells of $X_L$ contained in
  $T$; for every cell of $X$ not contained in $T$, we include a complex as in
  Lemma \ref{lem:cube_tree}.  The resulting subcomplex includes $X_L^{(k-1)}$
  and, by induction on $n-k$, deformation retracts to $T$.  Therefore it is a
  $k$-spanning tree for $X_L$.

  Now let $\mathcal{B}$ be an optimal basis for $H_k(X,T;\mathbb{Q}) \cong
  H_k(X_L,T_L;\mathbb{Q})$.  By the argument above, any $k$-cell of $X_L$ is
  homologous rel $T_L$ to a sum of at most $2^{n-k}n!/k!$ cells in the $k$-faces,
  where $n$ is the dimension of $X$.  In turn, any $k$-cell of
  $X_L \setminus T_L$ which is contained in a $k$-face represents the same
  homology class modulo $T_L$ as that face does modulo $T$, and therefore can be
  represented as a sum of at most $G(T)$ elements of $\mathcal{B}$.  Therefore,
  $G(T_L) \leq G(T) \cdot 2^{n-k}n!/k!$.
\end{proof}
We now have the tools we need to prove Lemma \ref{lem0'}.  We will do this by
way of two auxiliary lemmas.  The first states that any cochain with
coefficients in $\mathbb{F}/\mathbb{Z}$ which can be lifted to $\mathbb{F}$ can
be lifted to a cochain which is not too big.
\begin{lem}[bounded lifting] \label{lem2}
  Let $X$ be a finite simplicial complex and $z \in
  Z^k(X;\mathbb{F}/\mathbb{Z})$ a cocycle, and let $T$ be a $k$-spanning tree of
  $X$.  Then if $z$ lifts to a cocycle $\tilde z \in Z^k(X;\mathbb{F})$, we can
  find such a lift $\tilde z$ with $\lVert \tilde z \rVert_\infty \leq k+1+G(T)$.
\end{lem}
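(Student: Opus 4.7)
I would start from an arbitrary cocycle lift $\tilde z_0 \in Z^k(X;\mathbb{F})$ of $z$ (which exists by hypothesis) and modify it in two stages, each by adding an integer cocycle so that the lift property is preserved. Fix an optimal basis $\mathcal{B}$ for $H_k(X,T;\mathbb{Q})$ realizing $G(T)$, and let $\mathcal{B}^*$ be the dual basis of $H^k(X,T;\mathbb{F})$; recall that the defining properties of a $k$-spanning tree yield an isomorphism $H^k(X,T;\mathbb{F}) \cong H^k(X;\mathbb{F})$.

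The first step is to make the restriction to $T$ small. Because $T$ is $k$-dimensional with $H_k(T;\mathbb{Q})=0$, we have $H^k(T;\mathbb{F})=0$, so $\tilde z_0|_T = d\beta$ for some $\beta \in C^{k-1}(X;\mathbb{F})$ (using $C^{k-1}(T) = C^{k-1}(X)$). I then choose an integer cochain $c \in C^{k-1}(X;\mathbb{Z})$ so that $\beta' := \beta + c$ takes values in $[0,1)$ on each $(k-1)$-simplex, and set $\tilde z_1 := \tilde z_0 + dc$. This is still a lift, since $dc \in Z^k(X;\mathbb{Z})$, and by construction $\tilde z_1|_T = d\beta'$; more generally, $\lVert d\beta'\rVert_\infty < k+1$ on all of $X$, since each $k$-simplex has $k+1$ codimension-one faces on which $|\beta'|<1$.

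Next I use gnarledness to handle values off $T$. The cocycle $\xi := \tilde z_1 - d\beta' \in Z^k(X,T;\mathbb{F})$ vanishes on $T$ and represents $[\tilde z_1]$ in $H^k(X,T;\mathbb{F})$. The essential observation is that the image of $H^k(X,T;\mathbb{Z})$ in $H^k(X,T;\mathbb{F})$ contains the lattice $\mathbb{Z}\mathcal{B}^*$: the $k$-simplices of $X$ outside $T$ generate $H_k(X,T;\mathbb{Z})$ as an abelian group, and by optimality of $\mathcal{B}$ their classes lie in $\mathbb{Z}\mathcal{B}$, so the dual containment $(\mathbb{Z}\mathcal{B})^* = \mathbb{Z}\mathcal{B}^* \subset \img(H^k(X,T;\mathbb{Z}) \to H^k(X,T;\mathbb{F}))$ holds. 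I then choose $w \in Z^k(X,T;\mathbb{Z})$ so that the class $[\xi - w] = \sum_b c_b b^*$ has each $c_b \in [0,1)$, and set $\tilde z := \tilde z_1 - w$. For any $k$-simplex $\sigma$, $\partial \sigma$ lies in $C_{k-1}(T)$, so $\sigma$ is automatically a cycle of the pair $(X,T)$; since $\xi - w$ vanishes on $T$, evaluating it on $\sigma$ produces $\sum_b c_b a_b(\sigma)$ where $[\sigma] = \sum_b a_b(\sigma) b$ satisfies $\sum_b |a_b(\sigma)| \leq G(T)$ by optimality. Hence $|(\xi - w)(\sigma)| < G(T)$, and combining with $|d\beta'(\sigma)| < k+1$ gives $\lVert \tilde z \rVert_\infty < k+1+G(T)$.

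The main obstacle is the dual lattice containment used in the second step: it is what converts the chain-level gnarledness bound (expressing each simplex as a bounded integer combination of $\mathcal{B}$) into an adjustment by an honest integer cocycle on $X$ vanishing on $T$, so that the final cocycle---not merely its cohomology class---inherits the small sup norm.
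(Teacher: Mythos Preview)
Your proof is correct, but it reaches the bound by a genuinely different decomposition than the paper's. The paper introduces a $(k-1)$-wrapping tree $U$ and builds an explicit basis $\mathcal{F}=\{F(p)\}$ for $C_k(T)$ out of fillings of $(k-1)$-simplices mod $U$; together with lifted cycles $\tilde{\mathcal{B}}$ representing the optimal basis, this gives a set $\mathcal{F}\cup\tilde{\mathcal{B}}$ of $k$-chains on which any cocycle is freely determined. One then reduces the values on $\mathcal{F}\cup\tilde{\mathcal{B}}$ modulo $\mathbb{Z}$ into $[0,1)$ and reads off the bound because each simplex is a signed sum of at most $k+1$ elements of $\mathcal{F}$ and $G(T)$ elements of $\tilde{\mathcal{B}}$.

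You instead split the adjustment into two homological steps: first modify by an integer coboundary $dc$ so that $\tilde z_1|_T=d\beta'$ with $\lVert d\beta'\rVert_\infty<k+1$ (using only $H^k(T;\mathbb{F})=0$), and then modify by an integer relative cocycle $w\in Z^k(X,T;\mathbb{Z})$ so that the class $[\xi-w]\in H^k(X,T;\mathbb{F})$ has dual-basis coefficients in $[0,1)$. The existence of such a $w$ is exactly your dual-lattice claim, and it is justified: since the $k$-simplices off $T$ generate $H_k(X,T;\mathbb{Z})$ and land in $\mathbb{Z}\mathcal{B}$, any integer combination of $\mathcal{B}^*$ restricts to an integer functional on $H_k(X,T;\mathbb{Z})$, which by the universal coefficient theorem lifts to an integer cohomology class with the desired image in $H^k(X,T;\mathbb{F})$. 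The evaluation $(\xi-w)(\sigma)$ then depends only on the relative classes, giving the $G(T)$ contribution. Your route is more homological and avoids the auxiliary wrapping tree; the paper's route is more explicit at the chain level and makes the ``coordinates'' for cocycles visible. Both produce the identical bound $k+1+G(T)$.
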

\begin{proof}
  Fix $U$, a $(k-1)$-wrapping tree of $X$.  Then for every $(k-1)$-simplex $p$
  of $X \setminus U$, there is a unique $k$-chain $F(p)$ supported in $T$ which
  fills $p$ mod $U$.  Moreover,
  $$\mathcal{F}=\{F(p): p\text{ is a $(k-1)$-simplex of }X \setminus U\}$$
  is a basis for $C_k(T)$: they are linearly independent since their boundaries
  are linearly independent in $C_{k-1}(X)$, and any $k$-simplex $q$ in $T$ can be
  expressed as an integral linear combination $\sum_{p \in \partial q} F(p)$.  We
  can therefore extend $F$ by linearity to an isomorphism
  $F:C_{k-1}(X;\mathbb{F}) \to C_k(T;\mathbb{F})$.

  Now let $\mathcal{B}$ be an optimal basis for $H_k(X,T;\mathbb{F})$ which
  demonstrates that $T$ is $G(T)$-gnarled.  For every $b \in \mathcal{B}$,
  choose a $\hat b \in C_k(X,T;\mathbb{F})$ representing it and let
  $$\tilde{\mathcal{B}}=\{\hat b-F(\partial \hat b): b \in \mathcal{B}\}.$$
  These are cycles and form a basis for $H_k(X;\mathbb{F})$.

  Now for any cocycle $w \in C^k(T;\mathbb{F})$ and any $k$-simplex $q$ of $X$,
  we can write
  $$\langle w, q\rangle=\left\langle w, \sum_{p \in \partial q} F(p)\right\rangle+
  \left\langle w, q-\sum_{p \in \partial q} F(p)\right\rangle.$$
  The chain $q-\sum_{p \in \partial q} F(p)$ is a cycle, and hence homologous to the
  sum of at most $G(T)$ elements of $\tilde{\mathcal{B}}$ (with signs.)  Thus
  $w$ is determined by its values on $\mathcal{F} \cup \tilde{\mathcal{B}}$.
  Conversely, any function $\mathcal{F} \cup \tilde{\mathcal{B}} \to \mathbb{F}$
  extends to a $k$-cocycle on $X$: the values on $\mathcal{F}$ determine its
  values on simplices of $T$, while the values on $\tilde{\mathcal{B}}$
  determine its values on cycles.  Since there are no cycles in $T$, these are
  independent.

  Now let $\tilde z_0$ be any lift of $z$ to a cocycle in $C^k(X;\mathbb{F})$.
  If we change $\tilde z_0$ by changing the values on
  $\mathcal{F} \cup \tilde{\mathcal{B}}$ by integers, we get a new cocycle; in
  particular, we can do this to get a new $\tilde z$ such that its values on
  $\mathcal{F} \cup \tilde{\mathcal{B}}$ are in $[0,1)$.  Now, for every
  $k$-simplex $q$, $\langle \tilde z, q \rangle=
  \sum \pm\langle \tilde z, c \rangle$ where the sum is over $k+1+G(T)$ elements
  $c \in \mathcal{F} \cup \tilde{\mathcal{B}}$.  Therefore, $\tilde z$ is still
  a lift of $z$ and has $\lVert \tilde z \rVert_\infty \leq k+1+G(T)$.
\end{proof}
Now we show that if a chain has a filling with $\mathbb{Z}$ coefficients, we can
find such a filling near any filling with $\mathbb{F}$ coefficients.
\begin{lem} \label{Zapprox}
  Let $X$ be a finite simplicial complex equipped with the standard metric, and
  let $X_L$ be the cubical or edgewise $L$-regular subdivision of $X$, and let
  $k \geq 0$.  Then there is a constant $C(X,k)$ such that for any $\alpha \in
  C^k(X_L;\mathbb{F})$ such that $\delta\alpha$ takes integer values and is a
  coboundary over $\mathbb{Z}$, there is an $\tilde\alpha \in
  C^k(X_L;\mathbb{Z})$ such that $\delta\alpha=\delta\tilde\alpha$ and
  $\lVert\tilde{\alpha}\rVert_\infty \leq \lVert\alpha\rVert_\infty+C$.
\end{lem}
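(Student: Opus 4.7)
The plan is to reduce to the bounded lifting lemma (Lemma \ref{lem2}) applied to the mod-$\mathbb{Z}$ reduction of $\alpha$. Let $z \in C^k(X_L;\mathbb{F}/\mathbb{Z})$ denote the reduction of $\alpha$ modulo the integers. Since $\delta\alpha$ takes integer values, $\delta z = 0$, so $z$ is a cocycle. The hypothesis that $\delta\alpha$ is a coboundary over $\mathbb{Z}$ means we may pick $\beta \in C^k(X_L;\mathbb{Z})$ with $\delta\beta = \delta\alpha$; then $\alpha - \beta \in Z^k(X_L;\mathbb{F})$ is an $\mathbb{F}$-cocycle whose mod-$\mathbb{Z}$ reduction equals $z$. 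This verifies the hypothesis of Lemma \ref{lem2}.

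Next, I would fix a $k$-spanning tree $T_L$ of $X_L$ supplied by Lemma \ref{lem:sub}, so that $G(T_L) \leq G'(X,k)$ with a constant independent of the subdivision parameter $L$. Applying Lemma \ref{lem2} to $z$ with this spanning tree yields a cocycle $\tilde{z} \in Z^k(X_L;\mathbb{F})$ lifting $z$, with $\lVert\tilde{z}\rVert_\infty \leq k+1+G'(X,k)$.

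Finally, I would define $\tilde\alpha := \alpha - \tilde{z}$. Since $\tilde z \equiv \alpha \pmod{1}$ as cochains, $\tilde\alpha$ takes integer values; since $\tilde z$ is a cocycle, $\delta\tilde\alpha = \delta\alpha$; and by the triangle inequality,
\[
\lVert\tilde\alpha\rVert_\infty \;\leq\; \lVert\alpha\rVert_\infty + \lVert\tilde z\rVert_\infty \;\leq\; \lVert\alpha\rVert_\infty + (k+1+G'(X,k)),
\]
so $C := k+1+G'(X,k)$ works.

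The only step requiring care is the identification of the right cocycle in $\mathbb{F}/\mathbb{Z}$ and the verification that it lifts; this is essentially bookkeeping once one isolates $z$. With Lemmas \ref{lem2} and \ref{lem:sub} in hand there is no genuine obstacle here: no isoperimetric argument enters, and the dependence of $C$ on $X$ and $k$ comes entirely through the gnarledness bound. The real work of the section was already done in those two lemmas; Lemma \ref{Zapprox} is essentially their combination.
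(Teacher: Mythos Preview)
Your proposal is correct and follows essentially the same approach as the paper: reduce $\alpha$ mod $\mathbb{Z}$, invoke Lemma~\ref{lem:sub} for a uniformly bounded-gnarledness spanning tree, apply Lemma~\ref{lem2} to obtain a small lift, and subtract. If anything, your version is slightly more careful than the paper's, since you explicitly verify the lifting hypothesis of Lemma~\ref{lem2} using the assumption that $\delta\alpha$ is an integral coboundary.
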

\begin{proof}
  By Lemma \ref{lem:sub}, $X_L$ admits a spanning tree whose gnarledness is
  bounded by a constant $C_0(X,k)$.  Then by Lemma \ref{lem2}, the cocycle
  $\alpha \mod \mathbb{Z} \in C^k(X_L;\mathbb{F}/\mathbb{Z})$ has a lift
  $\Delta\alpha \in C^k(X_L;\mathbb{F})$ with $\lVert\Delta\alpha\rVert_\infty
  \leq k+1+C_0$.  Then we can set $\tilde\alpha=\alpha-\Delta\alpha$ and
  $C=k+1+C_0$.
\end{proof}
\begin{proof}[Proof of Lemma \ref{lem0'}]
  If $\omega=0$, we can take $\alpha=0$, so suppose $\omega \neq 0$.

  By Lemma \ref{lem0F}, we can find an $\alpha \in C^{k-1}(X_L;\mathbb{Q})$ such
  that $d\alpha=\omega$ and $\lVert\alpha\rVert_\infty \leq
  KL\lVert\omega\rVert_\infty$.  Then by Lemma \ref{Zapprox} we can find an
  $\tilde\alpha \in C^{k-1}(X_L;\mathbb{Z})$ such that $d\tilde\alpha=\omega$ and
  $$\lVert\tilde\alpha\rVert_\infty \leq KL\lVert\omega\rVert_\infty+k+1+C_0 \leq
  (KL+k+1+C_0)\lVert\omega\rVert_\infty.$$
  This gives us an estimate for the isoperimetric constant $C_{\mathrm{IP}}(X,k)$.
\end{proof}

\section{Building linear homotopies}

In this section, we prove Theorem \ref{intro:main}.  The proof is based on two
lemmas: one to take care of obstructions posed by finite homotopy groups, and
the other for infinite obstructions.

We start with a fairly general result for finite homotopy groups.  It shows that
if a map $X \to Z$ can be retracted to a subspace $Y \subset Z$ with finite
relative homotopy groups, then one can force this retraction to be geometrically
bounded.  The special case in which $Y$ is a point is proven in \cite{FWPNAS} as
Theorem 1.
\begin{lem} \label{lem:Qlift}
  Let $Y \subset Z$ be a pair of finite simplicial complexes such that
  $\pi_k(Z,Y)$ is finite for $k \leq n+1$.  Then there is a constant $C(n,Y,Z)$
  with the following property.  Let $X$ be an $n$-dimensional simplicial complex
  and $f:X \to Z$ a simplicial map which is homotopic to a map $g:X \to Y$.
  Then there is a short homotopy of $f$ to a map $g^\prime$ which is homotopic to
  $g$ in $Y$, that is, a homotopy which is $C$-Lipschitz under the standard
  metric on the product cell structure on $X \times [0,1]$.
\end{lem}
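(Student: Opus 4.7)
The plan is to build the homotopy $F: X \times [0,1] \to Z$ by induction on the skeleta of $X$, with $X \times [0,1]$ carrying a product cell structure whose time factor is subdivided into a fixed number of intervals depending only on $Y$, $Z$, $n$. All extensions at each stage are drawn from a precomputed finite menu of \emph{filling blocks}, so a uniform Lipschitz bound will follow automatically.

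First I construct the menu. Because $X$ is $n$-dimensional and $f$ is simplicial, the restrictions $f|_\sigma$ for simplices $\sigma$ of $X$ lie in a finite set of simplicial maps $\Delta^k \to Z$, $k \le n$. For each such restriction, together with each possible simplicial assignment on the lower skeleta of $\sigma \times [0,1]$ (still a finite list), the relative homotopy classes of extensions $\sigma \times [0,1] \to Z$ sending $\sigma \times \{1\}$ into $Y$ are parametrized by a quotient of $\pi_k(Z, Y)$. Since this group is finite for $k \le n+1$, the number of such classes is finite, and I fix a representative of each. All representatives then satisfy a common Lipschitz bound $C = C(n, Y, Z)$.

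I then build $F$ cell by cell. On each vertex $v$, choose a short path $F(v, \cdot)$ from $f(v)$ to a vertex of $Y$ in its path component. Inductively, assume $F$ has been defined on $X^{(k-1)} \times [0,1]$ so that $F(\cdot, 1)|_{X^{(k-1)}}$ already lies in $Y$. For each $k$-cell $\sigma$, I have boundary data on $\partial(\sigma \times [0,1]) \setminus (\sigma \times \{1\})$, and I extend by selecting one of the precomputed filling blocks matching that boundary. The existence of at least one filling block with the right boundary is guaranteed by the fact that $f|_\sigma$ is already homotopic through $Z$ to $g|_\sigma \in Y$, via the (uncontrolled) global homotopy.

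The main obstacle is not the existence of an extension but the \emph{choice} among the finitely many options: a wrong choice can produce an $F(\cdot, 1)$ which is homotopic to $g$ in $Z$ but not in $Y$, the discrepancy lying in torsion classes of $\pi_k(Z, Y)$. This is where the given homotopy $H$ is used as a ``road map.'' At each step I pick the unique filling block whose class in $\pi_k(Z, Y)$ matches that of $H|_{\sigma \times [0,1]}$. Equivalently, I maintain the inductive invariant that $F|_{X^{(k)} \times [0,1]}$ and $H|_{X^{(k)} \times [0,1]}$ are homotopic rel $X^{(k)} \times \{0\}$ through homotopies sending $X^{(k)} \times \{1\}$ into $Y$ at all times. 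At stage $k+1$, the obstruction to extending this secondary homotopy over the next layer of cells lies in $\pi_{k+1}(Z, Y)$, which is finite by hypothesis, and choosing the filling block compatible with $H$ kills it. When $k = n$ the invariant yields in particular $F(\cdot, 1) \simeq_Y g$, completing the argument. Since every piece of $F$ came from the finite precomputed menu, $F$ is $C(n, Y, Z)$-Lipschitz on the product cell structure.
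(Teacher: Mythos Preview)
Your proposal is correct and follows essentially the same approach as the paper: the paper makes your secondary-homotopy invariant concrete by actually building a map $F:X\times\Delta^2\to Z$ whose three edges carry the uncontrolled homotopy $H$, the desired short homotopy, and a homotopy inside $Y$, respectively, and the choice $u_q([\varphi])=0$ there is exactly your selection of the filling block matching the class of $H|_{\sigma\times[0,1]}$. One index slip worth fixing: for a $k$-cell $\sigma$ the extensions form a $\pi_{k+1}(Z,Y)$-torsor rather than a quotient of $\pi_k(Z,Y)$, but since the hypothesis gives finiteness through $\pi_{n+1}(Z,Y)$ this does not affect the argument.
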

Note that the constant $C$ does not depend on $X$ and in particular on the
choice of a subdivision of $X$.  Thus if we consider Lipschitz and not just
simplicial maps from $X$ to $Y$, the width of the homotopy remains constant,
rather than linear in the Lipschitz constant as is the case with some of our
later results.

We will actually use the following relative version of this result: if $f:(X,A)
\to (Z,Y)$ homotopes into $Y$ rel $A$, then there is a corresponding short
homotopy rel $A$.  The proof below works just as well for this variant; one
merely has to check that at every stage $f|_A$ remains invariant.
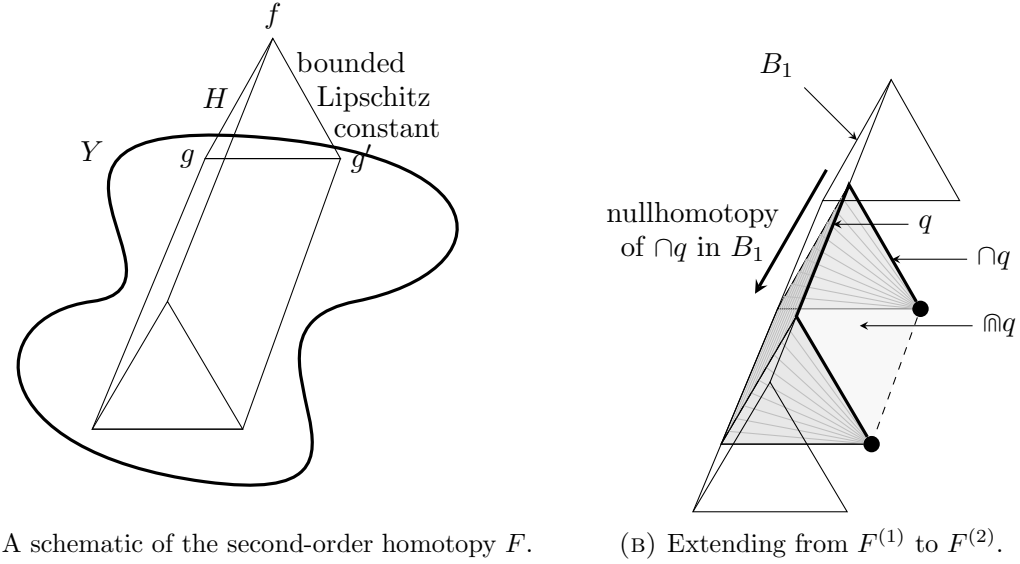
\begin{figure} 
  \centering
  \begin{subfigure}[b]{0.5\textwidth}
    \centering
  \begin{tikzpicture}
      \node(Y) at (0.5,4) {$Y$};
      \draw[very thick] (2,-0.4) .. controls (-1,0) and (-1,1.8) ..
      (0.5,2) .. controls (2,2.2) and (-1,4.4) ..
      (2.5,4.2) .. controls (6,4) and (6,2.4) ..
      (4,2) .. controls (2,1.6) and (5,-0.8) .. cycle;
      \coordinate (a1) at (0.5,0.3);
      \coordinate (a2) at (1.5,2);
      \coordinate (a3) at (2.5,0.3);
      \coordinate (b1) at (2,3.9);
      \coordinate (b2) at (2.9,5.5);
      \coordinate (b3) at (3.8,3.9);
      \draw (a1) -- (a2) -- (a3) -- cycle;
      \draw (b1) -- (b2) -- (b3) -- cycle;
      \draw (a1) -- (b1) -- (b3) -- (a3) -- cycle;
      \node [above] at (b2) {$f$};
      \node [left] at (b1) {$g$};
      \node [left] at (barycentric cs:b1=0.5,b2=0.5) {$H$};
      \node [right] at (barycentric cs:b3=0.2,b2=0.8) {bounded};
      \node [right] at (barycentric cs:b3=0.5,b2=0.5) {Lipschitz};
      \node [right] at (barycentric cs:b3=0.75,b2=0.25) {constant};
      \node [right] at (b3) {$g^\prime$};
      \draw (a2) -- (b2);
  \end{tikzpicture}
  \caption{A schematic of the second-order homotopy $F$.}
  \end{subfigure}
  \begin{subfigure}[b]{0.4\textwidth}
    \centering
    \begin{tikzpicture}
      \tikzstyle{chainfill}=[fill=gray!25, fill opacity=0.7];
      \tikzstyle{lightchainfill}=[fill=gray!10, fill opacity=0.5];
      \tikzstyle{dott}=[circle,fill=black,scale=0.6];
      \coordinate (a1) at (0.5,0.3);
      \coordinate (a0) at (1.5,2);
      \coordinate (a2) at (2.5,0.3);
      \coordinate (b1) at (1.25,2.1);
      \coordinate (b0) at (2.2,3.75);
      \coordinate (b2) at (3.15,2.1);
      \coordinate (w0) at (barycentric cs:b0=-0.5,a0=1.5);
      \coordinate (w1) at (barycentric cs:b1=-0.5,a1=1.5);
      \coordinate (w2) at (barycentric cs:b2=-0.5,a2=1.5);
      \coordinate (z0) at (barycentric cs:b0=1.8,a0=-0.8);
      \coordinate (z1) at (barycentric cs:b1=1.8,a1=-0.8);
      \coordinate (z2) at (barycentric cs:b2=1.8,a2=-0.8);
      \filldraw[chainfill] (a1) -- (a0) -- (b0) -- (b1) -- cycle;
      \filldraw[chainfill] (b1) -- (b2) -- (b0) -- cycle;
      \filldraw[chainfill] (a1) -- (a2) -- (a0) -- cycle;
      \fill[lightchainfill] (a0) -- (a2) -- (b2) -- (b0) -- cycle;
      \node[dott] (A2) at (a2) {};
      \node[dott] (B2) at (b2) {};
      \foreach \t in {0.1,0.2,...,0.9} {
        \draw[gray!50] (A2) -- (barycentric cs:a0=\t,a1=1-\t) --
        (barycentric cs:b0=\t,b1=1-\t) -- (B2);
      }
      \draw[very thick] (A2) -- (a0) -- (b0) -- (B2);
      \draw (a0) -- (a1) -- (A2);
      \draw (w0) -- (w1) -- (w2) -- cycle (z0) -- (z1) -- (z2) -- cycle;
      \draw (w0) -- (z0) (w1) -- (z1);
      \draw[dashed] (a2) -- (b2);
      \draw[-stealth,very thick] (2.2-0.3,3.75+0.2) --
      node[anchor=east,align=center] {nullhomotopy\\of $\cap q$ in $B_1$}
      (1.25-0.3,2.1+0.2);
      \draw[stealth-] (barycentric cs:b0=0.7,a0=0.3) -- +(1,0)
      node[anchor=west] {$q$};
      \draw[stealth-] (barycentric cs:b0=0.4,b2=0.6) -- +(1,0)
      node[anchor=west] {$\cap q$};
      \draw[stealth-] (barycentric cs:a0=0.45,b2=0.45,a2=0.1) -- +(1.5,0)
      node[anchor=west] {$\Cap q$};
      \draw[stealth-] (barycentric cs:z0=0.5,z1=0.5) -- +(-0.7,0.7)
      node[anchor=south east] {$B_1$};
    \end{tikzpicture}
      \caption{
          Extending from $F^{(1)}$ to $F^{(2)}$.
      }
  \end{subfigure}
  \caption{Illustrations for the proof of Lemma \ref{lem:Qlift}.}
\end{figure}
\begin{proof}
  Let $H:X \times [0,1] \to Z$ be a homotopy with $H_0=f$ and $H_1=g$; we have
  no control over this homotopy, only over $f$.  Our strategy will be to push
  both $f$ and the homotopy into $Y$ via a second-order homotopy.  Let
  $\Delta^2$ be the 2-simplex with edges $e_0$, $e_1$, and $e_2$ opposite
  vertices $0$, $1$, and $2$.  At the end of the construction, we will obtain a
  map $F:X \times \Delta^2 \to Z$ such that $F|_{e_2}=H$, $F|_{e_0}$ lands in $Y$,
  and $F|_{e_1}:X \times [0,1] \to Z$ is the short homotopy we are looking for.

  We will construct this map one skeleton of $X$ at a time.  At each step we
  ensure that the restrictions $F|_{q \times e_1}$ for simplices $q$ of $X$ are
  chosen from a finite set of Lipschitz maps depending only on $Y$ and $Z$.  In
  this way we get a universal bound on the Lipschitz constant.  We start by
  setting $B_0=X \times e_2$ and
  $$F^{(0)}=H:B_0 \to Z.$$
  In general, for $k \geq 0$, let
  $$B_k=(X^{(k-1)} \times \Delta^2) \cup (X \times e_2)$$
  and $A_k=B_k \cap X \times e_0$.  Then suppose by induction we have a map
  $$F^{(k)}:(B_k,A_k) \to (Z,Y)$$
  such that the restrictions $F|_{q \times e_1}$ for $(k-1)$-simplices $q$ of $X$
  are contained in a finite set $\mathcal{F}_k(Z,Y)$.  We would now like to
  extend this (over cells of the form $q \times \Delta^2$, for every $k$-simplex
  $q$ of $X$) to a map $F^{(k+1)}:(B_{k+1},A_{k+1}) \to (Z,Y)$.

  To avoid doing an extra ad hoc step we will use the convention $S^{-1}=
  \emptyset$.  Let $k \geq 0$.  Given a $k$-simplex $q$, let
  $$\cap q=(q \times \{0\} \cup \partial q \times e_1, \partial q \times \{2\})
  \subset (B_k,A_k).$$
  We think of this as a map $(D^k,S^{k-1}) \to (B_k,A_k)$.  It can be homotoped
  into $A_k$ rel boundary via a nullhomotopy which is constant on the
  $X$-coordinate and sends $e_1$ to $e_0$, keeping the vertex $\{2\}$ constant.
  Therefore the map $F^{(k)} \circ \cap q:(D^k,S^{k-1}) \to (Z,Y)$ homotopes rel
  boundary into $Y$.  Moreover, the set of homotopy classes of maps homotoping
  $F^{(k)} \circ \cap q$ into $Y$ (more precisely, of maps
  $$(q \times e_1,q \times \{2\}) \to (Z,Y)$$
  which restrict to $F^{(k)} \circ \cap q$ on $\cap q$) is in (non-canonical)
  bijection with $\pi_{k+1}(Z,Y)$.  One such bijection $u_q$ is obtained by
  sending a map $\ph:(q \times e_1,q \times \{2\}) \to (Z,Y)$ to the map
  $$u_q(\ph):\Cap q:=(q \times (e_1 \cup e_2) \cup \partial q \times \Delta^2,
  q \times \{1,2\} \cup \partial q \times e_0) \to (Z,Y)$$
  which restricts to $\ph$ on $(q \times e_1,q \times \{2\})$ and to $F^{(k)}$
  everywhere else.

  Now, by our inductive assumption, the number of different values the map
  $F^{(k)} \circ \cap q$ may take is bounded above by
  $$\lvert\mathcal{F}_k(Z,Y)\rvert^{k+1} \cdot
  \#\{(k-1)\text{-simplices of }Z\}.$$
  Let $\mathcal{F}_{k+1}(Z,Y)$ contain one Lipschitz map
  $$(\Delta^k \times e_1,\Delta^k \times \{2\}) \to (Z,Y)$$
  for each possible value of $F^{(k)} \circ \cap q$ and each homotopy class of
  nullhomotopy; thus there are at most
  $$\lvert\mathcal{F}_k(Z,Y)\rvert^{k+1} \cdot \#\{(k-1)\text{-simplices of }Z\}
  \cdot \lvert\pi_{k+1}(Z,Y)\rvert$$
  such maps.  We then set $F^{(k+1)}|_{q \times e_2}$ to be the element $\ph$ of
  $\mathcal{F}_{k+1}(Z,Y)$ for which $u_q([\ph])=0$.  With this choice, the map
  can be extended in some way to $q \times \Delta^2$.  Since this part of the
  map does not need to be controlled, we can do this in an arbitrary way.

  At the end of the induction, we have our map $F$: the Lipschitz constant of
  $F|_{X \times e_1}$ is at most $\max\{\Lip \ph: \ph \in \mathcal{F}_{n+1}(Z,Y)\}$.
\end{proof}
Now we prove Theorem \ref{intro:main} in the case where the target space is an
Eilenberg--MacLane space.  This will also be incorporated into the proof of the
general case.
\begin{thm} \label{thm:ooo}
  Let $X$ be a finite $n$-dimensional simplicial complex and $Y$ a finite
  simplicial complex with an $(n+1)$-connected map $Y \to K(\mathbb{Z},m)$, for
  some $m \geq 2$.  Then there are constants $C_1(n,Y)$ and $C_{\mathrm{IP}}(X,m)$
  such that any two homotopic $L$-Lipschitz maps $f,g:X \to Y$ are
  $C_1C_{\mathrm{IP}}(L+1)$-Lipschitz homotopic through $C_1(L+1)$-Lipschitz maps.
\end{thm}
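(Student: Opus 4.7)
The plan is to mirror the $S^2\to S^2$ nullhomotopy sketch from the introduction, now implemented by abstract obstruction theory. First apply Proposition \ref{prop:quantitative_simplicial_approximation} to replace $f,g$ by simplicial approximations $\tilde f,\tilde g$ on an $L$-regular subdivision $X_L$, joined to $f,g$ by $O(L)$-Lipschitz homotopies. Using $\pi_k(Y)=0$ for $k<m$ together with Lemma \ref{lem:Qlift} applied to $X_L^{(m-1)}$ (and then a routine extension over the $m$-cells that keeps the ``degree'' in $\pi_m(Y)\cong\mathbb Z$ fixed), I would further homotope $\tilde f,\tilde g$ through bounded-width, $O(L)$-Lipschitz homotopies so that both send $X_L^{(m-1)}$ to a fixed basepoint $y_0$. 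At dimension $m$, each of $\tilde f,\tilde g$ is then recorded by a cellular cochain $\omega_f,\omega_g\in C^m(X_L;\mathbb Z)$ with $\|\omega_\ast\|_\infty\le C_0(Y)$, since each $m$-simplex is mapped by a member of a fixed finite set of simplicial maps.

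Next, cellulate $X_L\times I_L$ where $I_L=[0,1]$ is partitioned into $N=\lceil C_{\mathrm{IP}}C_0 L\rceil$ equal intervals, and construct $H:X_L\times I_L\to Y$ from $\tilde f$ to $\tilde g$ by induction on skeleta, maintaining the invariant that the restriction of $H$ to each cell is drawn from a finite set of Lipschitz maps $\mathcal F_k$ depending only on $Y$ and the cell type. Set $H$ equal to $y_0$ on the $(m-1)$-skeleton of the product. Extending to the $m$-skeleton amounts to prescribing an integer ``degree'' $h_t(p)\in\mathbb Z$ on each horizontal cell $p\times[(t-1)/N,t/N]$, $p$ an $(m-1)$-simplex, and $d_t(q)\in\mathbb Z$ on each interior vertical cell $q\times\{t/N\}$, $q$ an $m$-simplex, subject to $d_0=\omega_f$ and $d_N=\omega_g$. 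The vanishing of the obstruction cocycle at dimension $m+1$ on every short prism $q\times[(t-1)/N,t/N]$ is then exactly the balance $d_{t-1}=d_t+\delta h_t$, which telescopes to $\omega_f-\omega_g=\delta(h_1+\cdots+h_N)$.

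The crucial input is the $\ell^\infty$-coisoperimetric Lemma \ref{lem0'}. Since $\tilde f\simeq\tilde g$, the cochain $\omega_f-\omega_g$ is an integral coboundary in $C^m(X_L;\mathbb Z)$, so the lemma produces $\alpha\in C^{m-1}(X_L;\mathbb Z)$ with $\delta\alpha=\omega_f-\omega_g$ and $\|\alpha\|_\infty\le C_{\mathrm{IP}}C_0 L=N$. I then spread $\alpha$ uniformly in time by setting $h_t(p)=\lfloor t\alpha(p)/N\rfloor-\lfloor(t-1)\alpha(p)/N\rfloor\in\{-1,0,1\}$, so that $\sum_t h_t=\alpha$ and $d_t=\omega_f-\delta(h_1+\cdots+h_t)$ has $\|d_t\|_\infty\le\|\omega_f\|_\infty+O(1)$. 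All $m$-cell degrees are thus bounded independently of $L$, so $H^{(m)}$ uses only a finite palette of cell restrictions, and the $(m+1)$-cells can be filled from a finite set of nullhomotopies. For higher skeleta with $m+1<k\le n+1$, the obstruction lies in $\pi_{k-1}(Y)=0$ and extensions are again chosen from a finite set.

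The main obstacle is the dimension-$m$ integer obstruction: modifying the $m$-skeleton of $X_L\times\{1/2\}$ directly by $\alpha$ would introduce maps of degree $\Omega(L)$ on unit $m$-cells, spoiling the Lipschitz bound. The combination of the isoperimetric estimate $\|\alpha\|_\infty=O(L)$ with the subdivision of time into $N\sim L$ pieces is exactly what allows $h_t\in\{-1,0,1\}$ and hence the finite-palette invariant. Tracking constants through the construction, $H$ is $C_1(L+1)$-Lipschitz on each slice (controlled by the $1/L$-scale of $X_L$) and $C_1C_{\mathrm{IP}}(L+1)$-Lipschitz as a map $X\times[0,1]\to Y$ (with the extra factor of $C_{\mathrm{IP}}$ coming from the $1/(C_{\mathrm{IP}}L)$-scale of $I_L$ in the time direction).
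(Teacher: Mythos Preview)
Your argument is essentially the paper's: simplicial approximation, reduce to the case where the $(m-1)$-skeleton maps to a point, apply Lemma~\ref{lem0'} to the degree cochain $\omega_f-\omega_g$, spread the resulting $\alpha$ evenly over $\lceil C_{\mathrm{IP}}C_0L\rceil$ time slices via the floor-function trick, and then extend skeleton by skeleton from a finite palette. The one substantive difference is in the normalization step: the paper modifies the \emph{target}, replacing $Y$ by the quotient obtained by collapsing an $m$-spanning tree (so that $Y^{(m-1)}$ becomes a single point and any simplicial map automatically sends $X_L^{(m-1)}$ there), whereas you modify the \emph{maps} by nullhomotoping $\tilde f,\tilde g$ on $X_L^{(m-1)}$ and then extending. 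Both achieve the same effect; the paper's version is slightly cleaner because it avoids the extra extension step over higher cells of $X_L\times\{1\}$.

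One small inaccuracy: your invocation of Lemma~\ref{lem:Qlift} with the pair $(\{y_0\}\subset Y)$ and domain $X_L^{(m-1)}$ does not quite fit, since that lemma requires $\pi_k(Y,\{y_0\})=\pi_k(Y)$ to be finite for $k\le m$, and $\pi_m(Y)\cong\mathbb Z$. However, the conclusion you actually need---a bounded-width nullhomotopy of the simplicial map $\tilde f|_{X_L^{(m-1)}}$---is easier than Lemma~\ref{lem:Qlift} and follows directly from the finite-palette argument, since $\pi_k(Y)=0$ for $k<m$ means there is a \emph{unique} (hence trivially bounded) choice at every intermediate stage, and at the top stage any single choice of filling per boundary pattern suffices. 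So the gap is cosmetic, not structural.
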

This theorem is the main geometric input into the proof of Theorem
\ref{intro:main}, and is by itself enough to prove certain important cases.  For
example, it shows directly that any $L$-Lipschitz map $f:S^3 \to
\mathbb{C}\mathbf{P}^2$ is $CL$-nullhomotopic, as is any nullhomotopic
$L$-Lipschitz map $X \to S^n$ for any $n$-dimensional $X$.  The general proof
strategy is that described in \S\ref{S:S2}.
\begin{proof}
  $Y$ is homotopy equivalent to the CW complex obtained from it by contracting
  an $m$-spanning tree.  In order to create maps which we can homotope
  combinatorially, we simplicially approximate $f$ and $g$ on an $L$-regular
  subdivision of $X$, then compose with this contraction.  After the homotopy is
  constructed, we can compose with the homotopy equivalence going back to get to
  the original $Y$.  This increases constants multiplicatively and adds short
  homotopies to the ends; both of these can be absorbed into $C_1$.

  For the rest of the proof we assume that $Y$ is the contracted complex and
  that $f$ and $g$ are compositions of simplicial maps with the contraction.

  We construct the homotopy by induction on skeleta of $X \times I$.  In
  particular $f(X^{(m-1)})=\{*\}$.  Let $C_{\mathrm{IP}}=C_{\mathrm{IP}}(X,m)$ be the
  isoperimetric constant from Lemma \ref{lem0'}, and let $Z$ be the polyhedral
  complex given by the product cell structure on $X \times I$, where $I$ is
  split into $C_{\mathrm{IP}}L$ subintervals
  $[i/C_{\mathrm{IP}}L,(i+1)/C_{\mathrm{IP}}L]$.  We define
  $F_{m-1}: X \times \{0,1\} \cup Z^{(m-1)} \to Y$ by setting
  $F_{m-1}|_{X \times \{0\}}=f|_{X^{(m)}}$ and $F_{m-1}|_{X \times \{1\}}=g|_{X^{(m)}}$ and
  sending the rest to $*$.

  Now define a simplicial cocycle $\omega \in C^m(X;\pi_m(Y))$ by setting
  $$\langle\omega,q\rangle=\left[f|_{(q,\partial q)}\right]
  -\left[g|_{(q,\partial q)}\right] \in \pi_m(Y)$$
  for $m$-simplices $q$ of $X$.  Since $Y$ has a finite number of cells, there
  is a finite number of possible values of $\omega$ on simplices.  In
  particular, $\lVert\omega\rVert_\infty \leq C$ for some $C=C(Y)$.

  By assumption, since $f \simeq g$, $\omega$ is a coboundary.  By Lemma
  \ref{lem0'}, $\omega=d\alpha$ for some cochain $\alpha \in C^{m-1}(X;\pi_m(Y))$
  with $\lVert\alpha_i\rVert_\infty \leq C_{\mathrm{IP}}CL$.  We will use
  $\alpha$ to construct a cochain $\beta \in C^m(Z;\pi_m(Y))$ which we will use
  to extend $F_{m-1}$ to $Z^{(m)}$.

  Define $\beta$ as
  $$\begin{aligned}
    \left\langle\beta, p \times \left[\frac{i}{C_2L},\frac{i+1}{C_2L}\right]
    \right\rangle &
    =\left\lfloor\frac{i+1}{C_2L}\langle\alpha, p\rangle\right\rfloor-
    \left\lfloor\frac{i}{C_2L}\langle\alpha, p\rangle\right\rfloor &
    \begin{array}{r}
      \text{for $(m-1)$-simplices $p$ of }X,\\
      0 \leq i<C_2L;
    \end{array} \\
    \langle\beta, q \times \{i/C_2L\}\rangle &
    =-\langle\omega, q\rangle+\sum_{p \in \partial q}
    \left\lfloor\frac{i}{C_2L}\langle\alpha, p\rangle\right\rfloor &
    \begin{array}{r}
      \text{for $m$-simplices $q$ of }X,\\
      0 \leq i<C_{\mathrm{IP}}L.
    \end{array}
  \end{aligned}$$
  Clearly, $\beta$ is a cocycle.  Moreover, since
  $\left\lvert\sum_{p \in \partial q} \langle\alpha, p\rangle \right\rvert =
  \lvert\langle\omega,q\rangle\rvert \leq C$, one can see that
  $$\lVert\beta\rVert_\infty \leq C+m+1.$$
  In particular the bound depends only on $Y$.

  For each possible value of $\beta$ on cells, choose representatives
  $(\Delta_m,\partial\Delta_m) \to (Y,*)$ and $(\Delta_{m-1} \times I,
  \partial(\Delta_{m-1} \times I)) \to (Y,*)$ and extend $F_{m-1}$ to each
  $m$-cell of $Z$ using the appropriate representative to get
  $F|_{X \times \{0,1\} \cup Z^{(m)}}$.  By construction, for each $(m+1)$-cell $c$
  of $Z$, $F|_{\partial c}$ is nullhomotopic.

  Now suppose we have constructed $F|_{Z^{(k)}}$ for some $m \leq k \leq n$.  By
  induction, there is a finite number, depending only on $k$ and $Y$, of
  possible restrictions $F|_{\partial c}$, where $c$ is a $(k+1)$-cell of $Z$.
  Moreover, if $k \geq m+1$, $F|_{\partial c}$ is nullhomotopic since $\pi_k(Y)
  \cong 0$.  Thus for each possible restriction $F|_{\partial c}$, we can choose
  an extension to $c$.  Extending $F$ to $X \times \{0,1\} \cup Z^{(k+1)}$ in
  this way gives us a finite set, depending on $k+1$ and $Y$, of possible
  restrictions to $(k+2)$-cells.

  At the conclusion of the induction, we obtain a map $F$ which is the desired
  nullhomotopy.
\end{proof}
In general, the constant $C_1$ increases by a multiplicative factor in each
dimension, depending on the topology of $Y$.  It is worth attempting to analyze
$C_1$ and $C_{\mathrm{IP}}$ in simple cases, for example for maps $S^2 \to S^2$.
Here, simplicial approximation multiplies the Lipschitz constant by slightly
more than $2\sqrt{3}$.  The induction has one step, and if $\omega$ satisfies
$\lVert\omega\rVert_\infty=1$, then $\beta$ satisfies $\lVert\beta\rVert_\infty
\leq 4$.  With a bit of care in plumbing as we connect preimages of $S^2
\setminus *$ on the surface of our 3-cells, we can build the nullhomotopy by
increasing the Lipschitz constant by a factor of 3.  This gives a total
multiplicative factor of $C_1=6\sqrt{3}+\epsi \approx 10.4$ when $L$ is large.
The isoperimetric constant $C_{\mathrm{IP}}$ depends on the exact geometric model
for the preimage sphere; in the case of the tetrahedron, it is 1.

Putting together Lemma \ref{lem:Qlift} and Theorem \ref{thm:ooo}, we can now
prove Theorem \ref*{intro:main}.  We recall this result below:
\begin{thm*}
  Let $X$ be an $n$-dimensional finite complex.  If $Y$ is a finite simply
  connected complex which is rationally equivalent through dimension $n$ to a
  product of Eilenberg--MacLane spaces, then there are constants $C_1(n,Y)$ and
  $C_2(X)$ such that homotopic $L$-Lipschitz maps from $X$ to $Y$ are
  $C_1C_2(L+1)$-Lipschitz homotopic through $C_1(L+C_2)$-Lipschitz maps.
\end{thm*}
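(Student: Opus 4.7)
The plan is to combine Theorem \ref{thm:ooo} (which handles integral obstructions via isoperimetry) with Lemma \ref{lem:Qlift} (which handles finite, or torsion, obstructions via a road-map argument). The bridge is the rational factorization of $Y$: by hypothesis, I may find a map $\phi: Y \to Z$ to a finite complex $Z$ rationally equivalent through dimension $n$ to a product $\prod_i K(\mathbb{Z}^{a_i}, m_i)$, such that $\phi$ itself is a rational equivalence through dimension $n$. (One can take $Z$ to be a suitable finite skeleton of the relevant product of Eilenberg--MacLane spaces.) Passing to the mapping cylinder, I may assume $Y \subset Z$ with $\pi_k(Z, Y)$ finite for all $k \leq n+1$, putting the pair $(Z,Y)$ into the setting of Lemma \ref{lem:Qlift}.

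First I would apply Proposition \ref{prop:quantitative_simplicial_approximation} to replace $f$ and $g$ by simplicial maps on an $L$-regular subdivision $X_L$ of $X$, via $O(L)$-Lipschitz homotopies; this costs only multiplicative constants. Next I would build an $O(L)$-Lipschitz homotopy $\tilde H: X \times I \to Z$ between $\phi \circ f$ and $\phi \circ g$ by applying Theorem \ref{thm:ooo} independently to each $K(\mathbb{Z}, m_i)$ factor of the product $Z$: on each factor the identity is its own $(n+1)$-connected map, so Theorem \ref{thm:ooo} directly produces a short homotopy, and the resulting homotopies combine coordinate-wise into $\tilde H$.

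Finally I would apply the relative version of Lemma \ref{lem:Qlift} to the pair $(Z, Y)$ and the map $\tilde H: (X \times I, X \times \partial I) \to (Z, Y)$, whose boundary values $\phi f$ and $\phi g$ already lie in $Y$. Since $\pi_k(Z, Y)$ is finite for $k \leq n+1$, the lemma produces a short second-order homotopy rel $X \times \partial I$ which deforms $\tilde H$ into a map $X \times I \to Y$, giving the desired short homotopy between $f$ and $g$ in $Y$. The claimed Lipschitz bounds $C_1 C_2(L+1)$ and $C_1(L+C_2)$ then follow by composing the multiplicative and additive constants contributed at each stage.

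The main obstacle lies in verifying the relative lifting hypothesis of Lemma \ref{lem:Qlift} in the last step: that $\tilde H$ is homotopic rel $X \times \partial I$ in $Z$ to \emph{some} map into $Y$. The given (uncontrolled) homotopy $G: X \times I \to Y$ between $f$ and $g$ supplies a candidate $\phi \circ G$, but the homotopy produced by Theorem \ref{thm:ooo} may differ from $\phi \circ G$ by an element of the relative-homotopy torsor $\bigoplus_i H^{m_i - 1}(X; \mathbb{Z}^{a_i})$. To resolve this I would refine the proof of Theorem \ref{thm:ooo}: the integral filling cochain $\alpha$ supplied by Lemma \ref{lem0'} may be chosen within the specific cohomology class dictated by $G$, since shifting by an integer cocycle changes $\lVert \alpha \rVert_\infty$ only by an additive constant (absorbed into $C_2$) and preserves the coboundary $\omega = d\alpha$. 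With $\tilde H$ thereby placed in the homotopy class of $\phi \circ G$, the relative lifting hypothesis is met and Lemma \ref{lem:Qlift} completes the construction.
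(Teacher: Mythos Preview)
Your overall strategy is exactly the paper's: embed $Y$ in a finite model $Z$ of $\prod_i K(\mathbb Z, m_i)$ with $\pi_k(Z,Y)$ finite for $k \le n+1$, build a short homotopy $\tilde H$ in $Z$ via Theorem~\ref{thm:ooo} on each factor, then push it into $Y$ using the relative Lemma~\ref{lem:Qlift}. You also correctly isolate the one real obstacle, that $\tilde H$ need not be homotopic rel $X \times \partial I$ to any map landing in $Y$. However, your proposed resolution has a gap. You want to shift the filling cochain $\alpha$ by a cocycle $z$ so as to land in the class of $\phi \circ G$, asserting that this costs only an additive constant absorbable into $C_2(X)$. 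But the required class $[z] \in H^{m_i-1}(X;\mathbb Z)$ is dictated by the \emph{uncontrolled} homotopy $G$; since these cohomology groups are typically infinite, $[z]$ can be arbitrarily large, and any representing cocycle on $X_L$ has $\ell^\infty$ norm growing without bound in $[z]$. The constant you want to absorb therefore depends on $G$, not merely on $X$.

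The missing ingredient is that you need not hit the class of $\phi \circ G$ exactly, only its coset modulo the image of $\pi_1(\Map(X,Y),f) \to \pi_1(\Map(X,Z),\phi f)$, and that this cokernel is \emph{finite}. The paper establishes finiteness by chasing the obstruction-theory long exact sequence up the relative Postnikov tower of $Y \hookrightarrow Z$: since each $\pi_k(Z,Y)$ is finite, the subgroup $R\cdot\pi_1(\Map(X,Z),\phi f)$ always lifts, where $R=\prod_{k=2}^n\lvert\pi_k(Z,Y)\rvert$. One then fixes a finite set $\mathcal H$ of maps $SX \to Z$ surjecting onto the cokernel (linear combinations, with coefficients in $\{0,\dots,R-1\}$, of a generating set for $[SX,Z]$) and twists $\tilde H$ by the appropriate $a \in \mathcal H$ via the H-space multiplication on $Z$. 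The cost $\max_{a\in\mathcal H}\Lip a$ now genuinely depends only on $X$, and this is what produces $C_2(X)$ in the statement. Your idea of adjusting $\alpha$ inside the proof of Theorem~\ref{thm:ooo} would work just as well, but only \emph{after} this cokernel-finiteness argument is in place.
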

A corollary for highly connected $Y$ follows from the rational Hurewicz theorem.
\begin{cor} \label{cor:highly_connected}
  Let $Y$ be a rationally $(k-1)$-connected finite complex and $X$ an
  $n$-dimensional finite complex.  Then if $n \leq 2k-2$, then there are
  constants $C_1(n,Y)$ and $C_2(X)$ such that homotopic $L$-Lipschitz maps from
  $X$ to $Y$ are $C_1C_2(L+1)$-Lipschitz homotopic through
  $C_1(L+C_2)$-Lipschitz maps.
\end{cor}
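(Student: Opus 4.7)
The plan is to reduce the corollary to Theorem \ref{intro:main} by verifying that a finite simply connected, rationally $(k-1)$-connected complex $Y$ is automatically rationally equivalent through any dimension $n \leq 2k-2$ to a product of Eilenberg--MacLane spaces. Once this hypothesis is established, the conclusion is a direct application of Theorem \ref{intro:main}.

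First, I invoke the rational Hurewicz theorem: since $\pi_i(Y) \otimes \mathbb{Q} = 0$ for $1 \leq i \leq k-1$, the rational Hurewicz homomorphism $\pi_i(Y) \otimes \mathbb{Q} \to H_i(Y; \mathbb{Q})$ is an isomorphism for $k \leq i \leq 2k-2$. In particular the rational homotopy groups of $Y$ are finite-dimensional in each relevant degree, so building a rational Postnikov tower stage by stage makes sense.

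Second, I analyze the rational Postnikov tower. Write $Y_j$ for the $j$th rational Postnikov section, so that $Y_k = K(\pi_k(Y) \otimes \mathbb{Q}, k)$ and successive sections fit in principal fibrations with fiber $K(\pi_j(Y) \otimes \mathbb{Q}, j)$ classified by $k$-invariants $\kappa_j \in H^{j+1}(Y_{j-1}; \pi_j(Y) \otimes \mathbb{Q})$. I claim $\kappa_j = 0$ for all $k < j \leq n+1 \leq 2k-1$. By induction the rational cohomology of $Y_{j-1}$ agrees with that of $\prod_{i=k}^{j-1} K(\pi_i(Y) \otimes \mathbb{Q}, i)$ in degrees $\leq 2k-1$, since the lowest nontrivial cup product in $H^\ast(K(\mathbb{Q}, i); \mathbb{Q})$ has degree at least $2i \geq 2k$. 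Recall that $H^\ast(K(\mathbb{Q}, i); \mathbb{Q})$ is supported on $\{0, i\}$ when $i$ is odd and on nonnegative multiples of $i$ when $i$ is even; in either case it vanishes in degrees strictly between $i$ and $2i$. For any $i$ with $k \leq i \leq j-1$ the target degree $j+1$ satisfies $i < j+1 < 2i$ (the upper bound because $j+1 \leq 2k-1 < 2i$), hence $\kappa_j$ lies in a zero group.

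Consequently the rational Postnikov tower of $Y$ splits through dimension $n$, and the canonical map $Y \to \prod_{i=k}^n K(\pi_i(Y) \otimes \mathbb{Q}, i)$ induces isomorphisms on $\pi_i \otimes \mathbb{Q}$ for $i \leq n$. This is precisely the hypothesis of Theorem \ref{intro:main}, which then yields the desired $C_1 C_2(L+1)$-Lipschitz homotopy through $C_1(L+C_2)$-Lipschitz maps. The only step requiring genuine work is the Postnikov-tower calculation; the numerical constraint $n \leq 2k-2$ is used exactly twice there—once to rule out interference from cup products and once to place each $k$-invariant in a degree where the relevant rational cohomology of $K(\mathbb{Q}, i)$ vanishes—so no slack can be extracted from this argument beyond that range without a different input.
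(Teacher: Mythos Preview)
Your argument is correct and reaches the same goal as the paper---verifying that $Y$ is rationally equivalent through dimension $n$ to a product of Eilenberg--MacLane spaces and then invoking Theorem~\ref{intro:main}---but the route to that intermediate claim differs. The paper does not touch the Postnikov tower at all: it uses the rational Hurewicz isomorphism $\pi_i(Y)\otimes\mathbb{Q}\cong H_i(Y;\mathbb{Q})$ for $i\le 2k-2$ to identify $[Y,K(\pi_i(Y)\otimes\mathbb{Q},i)]\cong\Hom(\pi_i(Y)\otimes\mathbb{Q},\pi_i(Y)\otimes\mathbb{Q})$, picks the map $\ph_i$ corresponding to the identity, and observes that the product $(\ph_2,\ldots,\ph_{2k-2})$ is then rationally $(2k-1)$-connected. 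Your approach instead shows the rational Postnikov tower splits by checking that each $k$-invariant $\kappa_j$ lands in a cohomology group that vanishes for degree reasons. Both are standard rational-homotopy arguments; the paper's is shorter, while yours makes the obstruction-theoretic mechanism explicit and would adapt more readily if one wanted to push slightly past the stable range and track exactly which $k$-invariant first obstructs splitting.

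One small bookkeeping slip: you claim $\kappa_j=0$ for $k<j\le n+1$ and justify it via $j+1\le 2k-1$, but when $n=2k-2$ and $j=n+1$ one has $j+1=2k$, so the inequality $j+1<2i$ can fail at $i=k$. This is harmless, since your conclusion (``splits through dimension $n$'') only requires $\kappa_j=0$ for $j\le n$, and for that range $j+1\le n+1\le 2k-1<2i$ holds as stated. Tightening the claimed range to $k<j\le n$ removes the discrepancy without changing anything else.
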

Before giving the proofs of the corollary and the theorem, we recall some facts
about maps to Eilenberg--MacLane spaces which derive from properties of the
obstruction-theoretic isomorphism
$$[(X,A),(K(G,n),*)] \cong H^n(X,A;G)$$
induced by cell-wise degrees on cellular maps.  See for example Chapter 8 of
\cite{Spa} for details.  Let $X$ be any CW complex, $n \geq 2$, and $G$ an
abelian group, and consider a CW model of $K(G,n)$ whose $(n-1)$-skeleton is a
point $*$.  Then:
\begin{itemize}
\item $K(G,n)$ is an H-space.  That is, the element in $\Hom(G^2,G)$ sending
  $(a,b) \mapsto ab$ induces a multiplication map $\mult:K(G,n) \times K(G,n)
  \to K(G,n)$.  This has identity $*$, i.e.~it sends
  $$(K(G,n) \times *) \cup (* \times K(G,n)) \mapsto *,$$
  and is associative and commutative up to homotopy.  It can also be assumed
  cellular.
\item Let $f:X \to K(G,n)$ be a map.  Then the group
  $$\pi_1(\Map(X,K(G,n)),f) \cong [X \times [0,1],K(G,n)]_f$$
  of self-homotopies of $f$ is naturally isomorphic to $H^{n-1}(X;G)$.
\item Denote the map that sends $X$ to $* \in K(G,n)$ also by $*$.  Then
  $$\pi_1(\Map(X,K(G,n)),*) \cong [SX,K(G,n)]$$
  acts freely and transitively on $\pi_1(\Map(X,K(G,n)),f)$ via the
  multiplication map; the above isomorphism takes this to the action of
  $H^{n-1}(X;G)$ on itself via multiplication.
\end{itemize}
\begin{proof}[Proof of Corollary.]
  The rational Hurewicz theorem (see e.g.~\cite{KK}) states that if $X$ is a
  simply connected space such that $\pi_i(X) \otimes \mathbb{Q}=0$ for
  $i \leq k-1$, then the Hurewicz map
  $$\pi_i(X) \otimes \mathbb{Q} \to H_i(X;\mathbb{Q})$$
  induces an isomorphism for $i \leq 2k-2$.  Therefore, for $i \leq 2k-2$,
  $$[X,K(\pi_i(X) \otimes \mathbb{Q},i)]
  \cong H^i(X;\pi_i(X) \otimes \mathbb{Q})
  \cong \Hom(\pi_i(X) \otimes \mathbb{Q},\pi_i(X) \otimes \mathbb{Q}).$$
  In particular, we can find a map $\ph_i:X\to K(\pi_i(X) \otimes \mathbb{Q},i)$
  which induces the identity on $\pi_i$.  Then the map
  $$(\ph_2,\ph_3,\ldots,\ph_{2k-2}):X \to
  \prod_{i=1}^{2k-2} K(\pi_i(X) \otimes \mathbb{Q},i)$$
  is rationally $(2k-1)$-connected.  This allows us to apply Theorem
  \ref{intro:main}.
\end{proof}
\begin{proof}[Proof of Theorem \ref{intro:main}]
  Suppose that $Y$ is rationally homotopy equivalent through dimension $n$ to
  $\prod_{i=1}^r K(\mathbb{Z},n_i)$.  This gives us a map $Q:Y \to
  \prod_{i=1}^r K(\mathbb{Q},n_i)$ inducing an isomorphism on
  $H^*({-};\mathbb{Q})$.  For each $i$, let $\alpha_i \in H^{n_i}(Y;\mathbb{Z})$
  be in the preimage of the copy of $\mathbb{Q}$ corresponding to
  $H^{n_i}(K(\mathbb{Q},n_i))$; this induces a map $\ph_i:Y \to
  K(\mathbb{Z},n_i)$.  Then
  $$\ph=(\ph_1,\ldots,\ph_r):Y \to Z=\prod_{i=1}^r Z_i$$
  is again a rational homology isomorphism and so by the rational Hurewicz
  theorem, $(Z,Y)$ is a pair with $\pi_k(Z,Y)$ finite for $k \leq n+1$.

  Let $f,g:X \to Y$ be homotopic $L$-Lipschitz maps, and let $C_{2,k}=
  C_{\mathrm{IP}}(X,k)$.  Then by Theorem \ref{thm:ooo}, for each $i$, there is a
  $C_{1,i}(Y)$ such that $\ph_i \circ f$ and $\ph_i \circ g$ are
  $C_{1,i}C_{2,n_i}L$-Lipschitz nullhomotopic through $C_{1,i}L$-Lipschitz maps via
  homotopies $F_i:X \times [0,1] \to Z_i$.  Then
  $$F:=(F_1,\ldots,F_r):X \times [0,1] \to Z$$
  is a $\sum_{i=1}^r C_{1,i}C_2L$-Lipschitz homotopy.  Suppose first that we can
  homotope $F$ to an uncontrolled homotopy of $f$ and $g$ in $Y$.  Then by the
  relative version of Lemma \ref{lem:Qlift} applied to the pair
  $(X \times [0,1],X \times \{0,1\})$, there is a $C_1(n,Y,Z)$ such that $f$ and
  $g$ are $C_1C_2L$-Lipschitz homotopic in $Y$ through $C_1L$-Lipschitz maps.

  Note that such a homotopy may not exist a priori; we will need to modify $F$
  so that it does.  For this we use an algebraic construction.  We know that
  there is some homotopy $G:X \times [0,1] \to Y$ between $f$ and $g$.  So we
  can concatenate the homotopies $F$ and $\ph \circ G$ to give a map
  $H:X \times S^1 \to Z$:
  $$H(x,t)=\left\{\begin{array}{l l}
  F(x,2t) & 0 \leq t \leq 1/2 \\
  \ph \circ G(x,2(1-t)) & 1/2 \leq t \leq 1,
  \end{array}\right.$$
  where we identify $S^1=\mathbb{R}/\mathbb{Z}$.  Then $H$ represents an element
  of $\pi_1(\Map(X,Z),\ph \circ f)$.  Since each factor $Z_i$ is a
  high-dimensional skeleton of an H-space, there is a multiplication map
  $\mult:Z^{(M)} \times Z^{(M)} \to Z$ for some large enough $M$.  This induces a
  free transitive action of $[SX,Z]$ on each $\pi_1(\Map(X,Z),\ph \circ f)$.

  We now analyze the cokernel of the group homomorphism
  $$\pi_1(\Map(X,Y),f) \to \pi_1(\Map(X,Z),\ph \circ f).$$
  Consider the relative Postnikov tower
  \begin{center}
    \begin{tikzpicture}
      \node (x) at (0,0) {$Y$};
      \node (x0) at (3,0) {$P_1$};
      \node (x0prime) at (4.1,0) {$=P_0=Z$};
      \node (x1) at (3,1) {$P_2$};
      \node (vd) at (3,2) {$\vdots$};
      \node (xn) at (3,3) {$P_n$};
      \draw[->] (x) -- (x0) node[near end,anchor=south,inner sep=1pt]
           {$\ph_0=\ph$};
      \draw[->] (x) -- (x1) node[midway,anchor=south,inner sep=2pt] {$\ph_n$};
      \draw[->] (x) -- (xn) node[midway,anchor=south east,inner sep=1pt]
           {$\ph_n$};
      \draw[->] (xn) -- (vd) node[midway,anchor=west] {$p_n$};
      \draw[->] (vd) -- (x1) node[midway,anchor=west] {$p_3$};
      \draw[->] (x1) -- (x0) node[midway,anchor=west] {$p_2$};
    \end{tikzpicture}
  \end{center}
  of the inclusion $\ph:Y \hookrightarrow Z$.  Here, $P_k$ is a space such that
  $\pi_i(P_k,Y)=0$ for $i \leq k$ and $\pi_i(Z,P_k)=0$ for $i>k$.  The map $p_k$
  therefore only has one nonzero relative homotopy group, $\pi_k(Z,Y)$.  In this
  setting there is an obstruction theory long exact sequence
  (\hspace{1sp}\cite[\S2.5]{Baues}; cf.~also \cite[Prop.~14.3]{GM} and
  \cite[Lemma~2.7]{SulGen}) of groups
  \begin{multline*}
    \cdots \to H^{k-1}(X;\pi_k(Z,Y)) \to \pi_1(\Map(X,P_k),\ph_k \circ f) \to \\
    \to \pi_1(\Map(X,P_{k-1}),\ph_{k-1} \circ f) \to H^k(X;\pi_k(Z,Y)) \to \cdots.
  \end{multline*}
  In particular, an element of $\lvert\pi_k(Z,Y)\rvert
  \pi_1(\Map(X,P_{k-1}),\ph_{k-1} \circ f)$ is the image of some loop of maps to
  $P_k$ based at $\ph_k \circ f$.  Hence, independently of $\ph \circ f$,
  $$R\pi_1(\Map(X,Z),\ph \circ f)\text{, where }R:=\prod_{k=2}^n
  \lvert\pi_k(Z,Y)\rvert$$
  always lifts to $\pi_1(\Map(X,Y),f)$.  Let $\mathcal{H}$ be the (finite!)
  collection of linear combinations with coefficients between $0$ and $R-1$ of
  some finite generating set for $[SX,Z]$.  Then for any $f:X \to Y$, the finite
  set
  $$\{\mult(a,\id_{\pi_1(\Map(X,Z),\ph \circ f)}): a \in \mathcal{H}\}$$
  surjects onto the cokernel we are interested in.

  We can then choose $a \in \mathcal{H}$ so that $\mult([a],[H])$ can be
  homotoped into $Y$.  Now define a map $\tilde H:X \times S^1 \to Z$ by
  $$\tilde H(x,t)=\left\{\begin{array}{l l}
  \mult(F(x,2t),a(x,2t)) & 0 \leq t \leq 1/2 \\
  \ph \circ G(x,2(1-t)) & 1/2 \leq t \leq 1.
  \end{array}\right.$$
  Then $\tilde H$ is in the same homotopy class as $\mult([a],[H])$.  Therefore
  the map $\tilde F:X \times [0,1] \to Z$ given by $\tilde F(x,t)=
  \tilde H(x,t/2)$ is a homotopy between $f$ and $g$ which homotopes into $Y$,
  and whose Lipschitz constant is bounded by
  $$\Lip(\mult) \cdot \left(\Lip F+\max_{a \in \mathcal{H}} \Lip a\right).$$ 
  This is linear in $\max\{\Lip f,\Lip g\}$, and except for $\Lip a$, the
  coefficients depend only on $n$, $Y$ and $Z$, so $\tilde F$ can be plugged
  into the argument above.
\end{proof}
\begin{rmk}
  Note that in this proof, the dependence of $\max_{a \in \mathcal{H}} \Lip a$ on
  $X$ lies only in the choice of generating set for $[SX,Z]$.  In certain
  special cases, this constant can be independent of $X$.  For example, suppose
  that we know that $X$ is an $n$-sphere (or even just an $n$-dimensional PL
  homology sphere.)  Then $[SX,Z]=\pi_{n+1}(Z)$ is generated by maps whose degree
  on simplices is at most 1---regardless of the geometry of $X$.  This means
  that for such homology spheres $X$, $L$-Lipschitz maps $f,g:X \to Y$ can be
  homotoped through maps of Lipschitz constant $C(Y)L$, though the width of the
  homotopy required may depend on the geometry.  This may have applications like
  finding skinny metric tubes between ``comparable'' metrics on the sphere.  In
  contrast, results of Nabutovsky and Weinberger imply that without this
  comparability condition, such tubes may have to be extremely (uncomputably)
  thick.
\end{rmk}

\section{A counterexample}

One may ask whether the linear bound of Theorem \ref{intro:main} holds for any
simply-connected target space, not just products of Eilenberg--MacLane spaces.
The answer is emphatically no.  Here we give, for each $n \geq 4$, a space $Y$
and a sequence of nullhomotopic maps $S^n \to Y$ such that volume of any
Lipschitz nullhomotopy grows faster than the $(n+1)$st power of the Lipschitz
constant of the maps.  This forces the Lipschitz constant of the
nullhomotopy to grow superlinearly.

To make this precise: by the volume of a map $F:S^n \times [0,1] \to Y$ we mean
$$\vol F=\int_{S^n \times [0,1]} \lvert\Jac F(x)\rvert d\vol$$
(recall that by Rademacher's theorem the derivative of a Lipschitz map is
defined almost everywhere.)  By this definition,
$$\vol F \leq \vol(S^n \times [0,1])\sup_{x \in S^n \times [0,1]}
\lvert\Jac F(x)\rvert \leq \vol(S^n \times [0,1])(\Lip F)^{n+1}.$$

To construct the space $Y$, we take $S^2 \vee S^2$ and attach $(n+1)$-cells via
attaching maps which form a basis for $\pi_n(S^2 \vee S^2) \otimes \mathbb{Q}$.
Note that by rational homotopy theory, $\pi_{*+1}(S^2 \vee S^2) \otimes
\mathbb{Q}$ is a free graded Lie algebra on two generators of degree 1 whose Lie
bracket is the Whitehead product (see Exercise 44 of \cite{GM} or Example 1 of
\cite[\S24(f)]{FHT}.)  In particular, if $f$ and $g$ are the identity maps on
the two copies of $S^2$, the iterated Whitehead product
$$h_1=[f,[f,\ldots[f,g]\ldots]]:S^n \to S^2 \vee S^2,$$
with $f$ repeated $n-2$ times, represents a nonzero element of
$\pi_n(S^2 \vee S^2)$.  Moreover, the map
$$h_L=[L^2f,[L^2f,\ldots[L^2f,L^2g]\ldots]]:S^n \to S^2 \vee S^2$$
is an $O(L)$-Lipschitz representative of $L^{2n-2}[h_1]$.  Thus in $Y$, we can
define a nullhomotopy $H$ of $h_L$ by first homotoping it inside
$S^2 \vee S^2$ to $h_1 \circ \ph_{2n-2}$ for some map $\ph_{2n-2}:S^n \to S^n$ of
degree $L^{2n-2}$, and then nullhomotoping each copy of $h_1$ via a standard
nullhomotopy.

Since $h_1$ is not nullhomotopic in $S^2 \vee S^2$, this standard nullhomotopy
must have degree $C \neq 0$ on at least one of the $(n+1)$-cells, giving a
closed $(n+1)$-form $\omega$ on $Y$ such that $\int_{S^n \times I} \omega^*H=
L^{2n-2}C$.  Now, suppose $H^\prime$ is some other nullhomotopy of $h_L$.  Then
gluing $H$ and $H^\prime$ along the copies of $S^n \times \{0\}$ gives a map
$p:S^{n+1} \to Y$.  Note that if any map $(D^{n+1},S^n) \to (Y,S^2 \vee S^2)$ had
nonzero degree on cells, then the map $S^n \to S^2 \vee S^2$ on the boundary
would be homotopically nontrivial.  This shows that $p$ must have total degree
zero on cells, in other words, that $\int_{S^n \times I} \omega^*H^\prime=L^{2n-2}C$.
Thus the volume of a nullhomotopy of $h_L$ grows at least as $L^{2n-2}$.

In the sequel to this paper, we show that for $n=4$, this estimate is sharp, in
the sense that we can always produce a nullhomotopy whose Lipschitz constant is
quadratic in the time coordinate and linear in the others.

\section{Quantitative cobordism theory}

The goal of the rest of the article is to prove Theorem \ref*{filling}, which we
recall below.
\begin{thm*}
  If $M$ is an oriented closed smooth null-cobordant manifold which admits a
  metric of bounded local geometry and volume $V$, then it has a null-cobordism
  which admits a metric of bounded local geometry and volume
  $$\leq c_1(n) V^{c_2(n)}.$$
  Moreover, $c_2(n)$ can be chosen to be $O(\exp(n))$.
\end{thm*}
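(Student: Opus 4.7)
The plan is to make Thom's classical proof of the oriented cobordism theorem quantitative, using Theorem~\ref{intro:main} as the key new tool. The outline is: (i) produce a quantitative embedding $M\hookrightarrow S^N$ with a tubular neighborhood of uniform thickness; (ii) use the Pontryagin--Thom collapse to build a Lipschitz map $t:S^N\to MSO(N-n)$; (iii) apply Theorem~\ref{intro:main} to obtain a controlled nullhomotopy $H:S^N\times[0,1]\to MSO(N-n)$; and (iv) take a smoothed transverse preimage of the zero section to obtain the null-cobordism.

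For step (i), I would combine the bounded-local-geometry hypothesis with a quantitative Whitney-style argument: cover $M$ by $\poly(V)$ charts and embed via a partition of unity into $\mathbb{R}^N\hookrightarrow S^N$, choosing $N$ depending on $n$ (polynomially but with high degree) so as to resolve self-intersections among the charts. The embedding can be arranged to be bi-Lipschitz with a tubular neighborhood of uniform thickness, and to have Lipschitz constant polynomial in $V$. Step (ii) is then the routine Pontryagin--Thom collapse composed with the Gauss map classifying the stable normal bundle, producing a Lipschitz map $t:S^N\to MSO(N-n)$ whose image lies in a fixed finite subcomplex and whose Lipschitz constant is polynomial in $V$.

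Step (iii) is the heart of the argument. Thom's computation that $\pi_*(MSO)\otimes\mathbb{Q}$ is a polynomial algebra on generators in degrees divisible by $4$ implies that a finite truncation of $MSO(N-n)$ through dimension $N+1$ is rationally equivalent to a product of Eilenberg--MacLane spaces. Therefore, after simplicially approximating $t$ via Proposition~\ref{prop:quantitative_simplicial_approximation}, Theorem~\ref{intro:main} produces a nullhomotopy $H$ with $\Lip H\le C(n)\Lip t$, still polynomial in $V$. In step (iv), perturbing $H$ to be transverse to $BSO(N-n)$ and taking $W=H^{-1}(BSO(N-n))$ gives a compact oriented $(n+1)$-manifold with $\partial W=M$; the coarea formula bounds $\vol(W)$ by $(\Lip H)^{N-n}\cdot\vol(S^N\times I)$, which is polynomial in $V$, and a standard smoothing equips $W$ with a metric of bounded local geometry of the same order of volume.

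The principal obstacle, as the introduction flags, is step (iv): even though $\Lip H$ is only linear in $\Lip t$, the support of $H$ may spread out over the entire product $S^N\times[0,1]$, so the coarea bound picks up the full volume of $S^N$. Combined with the fact that the quantitative embedding in step~(i) forces $N$ to grow rapidly with $n$, this compounds through the exponent $N-n$ to produce the bound $c_2(n)=O(\exp(n))$. Eliminating this source of loss would require a \emph{localized} version of Theorem~\ref{intro:main} whose nullhomotopy remains supported near $t^{-1}(BSO(N-n))$, a problem the authors explicitly flag as open. The remaining steps, including checking that the smoothing of $W$ inflates its volume by only polynomial factors and that the Pontryagin--Thom map is Lipschitz with the claimed constants, involve standard techniques and contribute only polynomial overhead with bounded exponent.
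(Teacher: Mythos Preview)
Your outline matches the paper's proof essentially step for step, and the identification of the loss in step~(iv) as the source of the exponential $c_2(n)$ is exactly right. Two small corrections and one simplification are worth noting. First, the tubular neighborhood in step~(i) is \emph{not} of uniform thickness: the paper obtains width $\gtrsim 1/V$ (Proposition~\ref{prop:embedding}), and this is precisely what makes the Pontryagin--Thom collapse $\lesssim V$-Lipschitz rather than $O(1)$-Lipschitz; a truly uniform tube would give a much stronger theorem. Second, the embedding dimension $N$ is not polynomial in $n$ but exponential: the paper's chart argument yields $N=m(n+1)$ with $m\le\kappa\exp(\kappa n)$, which is what ultimately forces $c_2(n)=O(\exp(n))$. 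Finally, in step~(iii) the paper avoids invoking Thom's rational calculation of $\pi_*(MSO)$: since $MSO(k)$ is $(k-1)$-connected and one has arranged $k>n+3$, the rational Hurewicz theorem (Corollary~\ref{cor:highly_connected}) already shows it is rationally a product of Eilenberg--MacLane spaces through dimension $n+k+1$, so Theorem~\ref{intro:main} applies directly.
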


As described in the introduction, we will prove this theorem by executing the
following steps.  We begin by choosing a metric $g$ on $M$ such that $(M,g)$ has
bounded local geometry, and such that the volume $V$ of $(M,g)$ is bounded by
twice the complexity of $M$.  We then proceed as follows:
\begin{enumerate}
\item We embed $M$ into $\mathbb{R}^{n+k}$ for an appropriately large $k$
  (depending on $n$) so that the embedding has bounded curvature, bounded
  volume, and has a large tubular neighborhood.  We will use this map to embed
  the manifold into the standard round sphere $\mathbb{S}^{n+k}$ while
  maintaining bounds on its geometry.
  \vspace{2mm}
\item We show that the Pontryagin--Thom map from this sphere to the Thom space
  of the universal bundle of oriented $k$-planes in $\mathbb{R}^{n+k}$ (relative
  to the embedded manifold and its tubular neighborhood) has Lipschitz constant
  bounded as a function of $n$ and the volume of $M$.
  \vspace{2mm}
\item We analyze the rational homotopy type of the Thom space and determine
  that, up to dimension $n+k+1$, it is rationally equivalent to a product of
  Eilenberg--MacLane spaces.  Since $M$ is null-cobordant, this map is
  nullhomotopic, and so as a result, we can apply Theorem \ref*{intro:main} to
  conclude that there is a nullhomotopy which has Lipschitz constant bounded as
  a function of $n$ and the volume of $M$.  This translates to a map from the
  ball with boundary $\mathbb{S}^{n+k}$ to the Thom space with the same bound on
  the Lipschitz constant.
  \vspace{2mm}
\item The proof is completed by simplicially approximating this map from the
  ball, then using PL transversality theory to obtain an $(n+1)$-dimensional
  manifold, embedded in this ball, which fills $M$ and satisfies the conclusions
  of the theorem.
\end{enumerate}

Throughout this section, we will use the following notation.  We write
$x \lesssim y$ to mean that there is a constant $c(n)>0$, depending only on $n$,
such that $x \leq c(n) y$; $x \gtrsim y$ is defined analogously.  Similarly, we
write $x \lesssim A^{\lesssim y}$ to imply that there are constants $c_1(n)>0$ and
$c_2(n)>0$, again depending only on $n$, such that $x \leq c_1(n)A^{c_2(n) y}$.
We define the same expression with $\gtrsim$ analogously.  Throughout this
section we will also use $V$ to denote the volume of $M$.  Lastly, we will write
$Gr(n+k,n)$ to denote the Grassmannian of oriented $n$-dimensional planes in
$\mathbb{R}^{n+k}$ and $Th(n+k,n)$ to denote the Thom space of the universal
bundle over this Grassmannian.  $Gr(n+k,n)$ is given the standard metric,
which induces a metric on $Th(n+k,n)$.  Furthermore, we denote by $p^*$ the
basepoint of the Thom space $Th(n+k,n)$.

We begin by explicitly defining what ``bounded local geometry'' means in Theorem
\ref*{filling}.

\begin{defn} \label{defn:quantitative_bounded_geometry}
  Suppose that $(M, g)$ is a closed Riemannian manifold of dimension $n$.
  Following \cite{CheeGr}, we say that $M$ has \emph{bounded local geometry}
  $\geo(M) \leq \beta$ if it has the following properties:
  \begin{enumerate}[label=(B\arabic*)]
  \item	$M$ has injectivity radius at least $1/\beta$.
  \item	All elements of the curvature tensor are bounded below by $-\beta^2$ and
    above by $\beta^2$.
  \end{enumerate}
  The manifold $(M,g)$ satisfies $\widetilde{\geo}(M) \leq \beta$ if in
  addition it satisfies the following condition:
  \begin{enumerate}[label=(B\arabic*),resume]
  \item	The $k$th covariant derivatives of the curvature tensor are bounded by
    constants $C(n,k)\beta^{k+2}$.  (The $C(n,k)$ are defined once and for all,
    but we will not specify them.)
  \end{enumerate}
\end{defn}
Conditions (B1)--(B3) taken together agree with the standard definition used by
Riemannian geometers, except that we require explicit quantitative bounds.  A
theorem of Cheeger and Gromov \cite[Thm.~2.5]{CheeGr} states that for any given
$\epsi>0$ a metric $g$ on $M$ with $\geo(M,g) \leq 1$ can be $\epsi$-perturbed
to $g_\epsi$ with $\geo(M,g_\epsi) \lesssim 1$ which satisfies (B3).  In
particular, $\vol(M,g_\epsi) \leq (1+\epsi)^n\vol(M,g)$.  By rescaling, we get a
metric $\hat g$ with $\geo(M,\hat g) \leq 1$ and
$$vol(M,\hat g) \lesssim (1+\epsi)^n\vol(M,g).$$
Therefore, for the rest of the proof we can assume that (B3) holds, with a
constant multiplicative penalty on the volume of our manifold.

Finally, if $M$ has boundary, we say, following \cite{Schick}, that it satisfies
$\geo(M) \leq \beta$ if (B1) holds at distance at least $\beta$ from the
boundary, (B2) holds everywhere, and in addition the neighborhood of
$\partial M$ of width 1 is isometric to a collar $\partial M \times [0,\beta]$.
In particular, this implies that $\geo(\partial M) \leq \beta$.

\subsection{Embedding $M$ into $\mathbb{R}^{n+k}$}

To begin constructing the embedding described in the first step, we first choose
a suitable atlas of $M$.  A similar set of properties defines uniformly regular
Riemannian manifols, a notion due to H.~Amann (see, for example, page 4 of
\cite{DMS}.)  However, we require our quantitative bounds on the geometry of the
maps to be much more uniform, depending only on the dimension; we also require
that the charts can be partitioned into a uniform number of subsets consisting
of pairwise disjoint charts.
\begin{lem} \label{lem:chart_existence}
  Suppose that $M$ is a compact orientable $n$-dimensional manifold with
  $\widetilde{\geo}(M) \leq 1$.  There exists a finite atlas $\mathfrak{U}$ with
  the following properties, expressed in terms of constants $\mu \leq 3/25$,
  $c$, and $q$ depending only on $n$, as well as a natural number
  $10 \leq m \leq \kappa\exp(\kappa n)$ for some constant $\kappa>0$.
  \begin{enumerate}
  \item	Every map in $\mathfrak{U}$ is the exponential map from the Euclidean
    $n$-ball of radius $\mu$ to $M$ which agrees with the orientation of $M$.
    Since the injectivity radius of $M$ is at least $1$ and $\mu < 1$, this is
    well-defined.  We write $\mathfrak{U}=\{\phi_i:B_{\mu} \rightarrow M_{\mu}\}$.
    Here, $M_{\mu}$ is a geodesic ball of $M$ of radius $\mu$, and $B_{\mu}$ is
    the Euclidean ball of radius $\mu$ in $\mathbb{R}^n$.
    \vspace{2mm}
  \item	$\mathfrak{U}$ can be written as the disjoint union of sets
    $\mathfrak{U}_1,\ldots,\mathfrak{U}_m$ of charts such that any pair of
    charts from the same $\mathfrak{U}_j$ have disjoint image.
    \vspace{2mm}
  \item	When we restrict all the maps in $\mathfrak{U}$ to $B_{\mu/4}$, they
    still cover $M$.
    \vspace{2mm}
  \item	The pullback of the metric with respect to every $\phi \in \mathfrak{U}$
    is comparable to the Euclidean metric, that is,
    $$\frac{1}{q}(\rho \cdot \rho) \leq \phi^* g(x)(\rho, \rho)
    \leq q (\rho \cdot \rho)$$
    for every $\rho \in \mathbb{R}^n$, for every $x \in T_xM$, and where
    $\phi^* g(x)$ is the  pullback of $g$ at $x$. 
    \vspace{2mm}
  \item	The first and second derivatives of all transition maps are bounded by
    $c$.
  \end{enumerate}
\end{lem}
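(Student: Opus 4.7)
The plan is to build the atlas from a maximal $\mu/4$-separated net $\{p_i\}\subset M$, taking $\phi_i:=\exp_{p_i}|_{B_\mu}$ for a sufficiently small universal constant $\mu\leq 3/25$. Since the injectivity radius of $M$ is at least $1>\mu$, each $\phi_i$ is a well-defined diffeomorphism onto its image, so (1) is immediate. Property (3) follows from maximality of the net: any $x\in M$ not lying in some $B(p_i,\mu/4)$ could be added to the net, contradicting maximality.

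For the metric comparison (4), I would apply Rauch's comparison theorem inside $B_\mu$: under the sectional curvature bound $|K|\leq 1$ coming from (B2), the differential of $\exp_{p_i}$ differs from the identity by $O(\mu^2)$ on $B_\mu$, so choosing $\mu$ small enough in terms of $n$ yields the bi-Lipschitz bound with some universal $q$ depending only on $n$. For property (5), the first and second derivatives of the transition maps $\exp_{p_j}^{-1}\circ\exp_{p_i}$ are controlled via the Jacobi equation and its first variation in the base point. The bound on $|R|$ handles the first derivatives; the bound on $|\nabla R|$ provided by (B3), i.e.\ by the assumption $\widetilde{\geo}(M)\leq 1$, is precisely what yields the second-derivative bound. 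All constants depend only on $n$.

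The main obstacle is (2), producing the partition into at most $m\leq\kappa\exp(\kappa n)$ color classes with pairwise disjoint-image charts. I would realize this as a proper coloring of the intersection graph $G$ on vertex set $\{\phi_i\}$, where $\phi_i$ and $\phi_j$ are joined by an edge whenever their images meet, which forces $d(p_i,p_j)<2\mu$. Since the net is $\mu/4$-separated, the balls $B(p_j,\mu/8)$ are pairwise disjoint, and whenever $p_j$ is a neighbor of $p_i$ in $G$ they all lie inside $B(p_i,17\mu/8)$. Under the two-sided sectional curvature bound, Bishop--Gromov volume comparison gives $\vol B(p_j,\mu/8)\geq c_n\mu^n$ and $\vol B(p_i,17\mu/8)\leq C_n\mu^n$, so the degree of $G$ is bounded by $C_n/c_n$, which is of the form $\kappa\exp(\kappa n)$ by the standard Euclidean packing estimate in dimension $n$. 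A greedy coloring of a graph of maximum degree $D$ uses at most $D+1$ colors, which supplies (2) together with the required upper bound on $m$. If the greedy procedure produces fewer than $10$ color classes, I would subdivide one class into singletons to meet the lower bound $m\geq 10$. This completes the plan.
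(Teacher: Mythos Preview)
Your proposal is correct and follows essentially the same approach as the paper: exponential charts centered at a well-distributed set of points, a volume-comparison packing argument to bound the degree of the intersection graph and hence the number of color classes, and standard comparison geometry for the metric and transition-map bounds. The only cosmetic difference is that you start from a maximal $\mu/4$-separated net while the paper uses a Vitali-type selection of disjoint $\mu/12$-balls whose $12$-fold dilates cover $M$; the resulting packing and coloring arguments are the same.
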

\begin{proof}
  As mentioned above, this list of properties is closely related to one used in
  the definition of a uniformly regular Riemannian manifold.  Every compact
  manifold is uniformly regular, and it is known that a (potentially
  non-compact) orientable manifold $M$ with $\geo(M) \leq \beta$ for some
  $\beta$ is uniformly regular; this is shown in \cite{Amann2015}.  This
  guarantees an atlas with properties similar, though not identical, to the
  above.  We use a similar set of arguments to those compiled by Amann.

  To begin, we cover $M$ by balls of radius $\frac{\mu}{12} \leq \frac{1}{100}$.
  Since $M$ is compact, we require only finitely many balls to cover $M$.
  Furthermore, by the Vitali covering lemma, we can choose a finite subset
  $B_1, \ldots, B_k$ of these balls such that $3B_1, \ldots, 3B_k$ also cover
  $M$, and such that $B_1, \ldots, B_k$ are disjoint.  We also have that the
  balls $12B_1, \dots, 12B_k$ cover $M$, and that these balls have radius $\mu$.

  Fix a ball $12B_i$ for some $i$.  We would like to count how many other balls
  in $12B_1, \ldots, 12B_k$ intersect $12B_i$.  Call these balls $12B_{j_1},
  \ldots, 12 B_{j_m}$.  Then $B_i$ and $B_{j_1}, \dots, B_{j_m}$ all lie inside
  $50B_i$, and all are disjoint.  Since $M$ has bounded local geometry, the
  volume of $50B_i$ is bounded above in terms of $n$, and the volumes of $B_i$
  and $B_{j_1}, \dots, B_{j_p}$ are bounded below in terms of $n$.  This yields an
  exponential bound on $m$ in terms of $n$.  As a result, the balls $12B_1,
  \dots, 12B_k$ can be partitioned into $m$ sets of pairwise disjoint balls.
  We define these sets as $\mathcal{B}_1, \dots, \mathcal{B}_m$.  This proof is
  analogous to a standard proof of the Besicovitch covering lemma in
  $\mathbb{R}^n$.

  For every $j$ with $1 \leq j \leq m$, $\mathfrak{U}_j$ is defined as follows.
  For every ball $B \in \mathcal{B}_i$, the exponential map goes from the
  Euclidean ball of radius $\mu$ to $B$; furthermore, it can be chosen so that
  it agrees with the orientation of $M$.  These are exactly the charts that
  comprise $\mathfrak{U}_j$.  The first three properties that we desire are now
  satisfied.

  Property (4) is part of Lemma 1 of \cite{HKW}.  Indeed, all $k$th derivatives
  of the metric tensor are also bounded by a constant depending only on $n$ and
  $k$ \cite{Eich} \cite{Schick}.  This allows us to also bound the derivatives
  of the pullback of the Euclidean metric along transition functions between the
  charts.  Property (5) follows immediately from this.
\end{proof}

We will also need the following simple observation.
\begin{lem}
  \label{lem:function}
  There is a $C^\infty$ function $\zeta$ from $[0,\mu]$ to $[0,1]$ such that:
  \begin{enumerate}
  \item	$\zeta$ is monotonically increasing with $\zeta(0)=0$ and
    $\zeta(\mu)=1$.
  \item	$\zeta(t) = t/\mu$ for all $t \in [0,\mu/2]$.
  \item	$\zeta^{(k)}(\mu) = 0$ for all $k \in \mathbb{Z}_{> 0}$.
  \item	For every $k \in \mathbb{Z}_{\geq 0}$, there is some
    $c(k) \in \mathbb{R}$ such that
    $$\lvert\zeta^{(k)}(t)\rvert \leq c(k)$$
    for all $t \in [0,\mu]$.
  \end{enumerate}
\end{lem}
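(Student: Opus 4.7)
This lemma is a standard exercise in smooth bump-function constructions, and the plan is to define $\zeta$ piecewise. On $[0, \mu/2]$ I would simply set $\zeta(t) = t/\mu$, which immediately supplies property (2), the initial value $\zeta(0) = 0$, and the matching value $\zeta(\mu/2) = 1/2$. All of the real content lies in constructing the restriction of $\zeta$ to $[\mu/2, \mu]$ so that it glues smoothly to the linear piece, reaches $\zeta(\mu) = 1$, and is flat to all orders at $\mu$.

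The tool is the classical flat function $\psi(x) = \exp(-1/x)$ for $x > 0$ and $\psi(x) = 0$ for $x \leq 0$, which is $C^\infty$ with $\psi^{(k)}(0) = 0$ for all $k \geq 0$. From it one forms the smooth monotone step $\Phi(x) = \psi(x)/(\psi(x) + \psi(1-x))$, which equals $0$ for $x \leq 0$ and $1$ for $x \geq 1$, has all derivatives vanishing at both endpoints, and (by symmetrizing) may be assumed to satisfy $\Phi(x) + \Phi(1-x) = 1$.

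I would then write $\zeta(t) = 1/2 + \int_{\mu/2}^t \rho(s)\,ds$ on $[\mu/2, \mu]$, where $\rho \geq 0$ is smooth on $[\mu/2, \mu]$ with $\rho(\mu/2) = 1/\mu$, all higher derivatives of $\rho$ vanishing at $\mu/2$, all derivatives of $\rho$ vanishing at $\mu$, and $\int_{\mu/2}^\mu \rho = 1/2$. A concrete candidate is
$$\rho(t) = \frac{1}{\mu}\Bigl(1 - \Phi\bigl(\tfrac{2(t-\mu/2)}{\mu}\bigr)\Bigr) + c\cdot \psi(t-\mu/2)\,\psi(\mu - t),$$
with $c$ chosen so that $\int_{\mu/2}^\mu \rho = 1/2$. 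The first summand carries the correct boundary-derivative data at $\mu/2$ and $\mu$; the second is a smooth bump supported in the interior of $[\mu/2, \mu]$ with all derivatives vanishing at both endpoints, so tuning $c$ shifts the integral without disturbing the boundary conditions. For $|c|$ small enough, $\rho$ remains non-negative.

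With this definition all four properties are immediate: monotonicity follows from $\zeta' = \rho \geq 0$; property (2) holds by construction; property (3) holds because $\zeta^{(k)}(\mu) = \rho^{(k-1)}(\mu) = 0$ for $k \geq 1$; and property (4) follows because $\zeta$ is smooth on the compact interval $[0, \mu]$. The only point worth explicitly checking is $C^\infty$-smoothness at $\mu/2$, which reduces to verifying that $\rho(\mu/2) = 1/\mu$ and $\rho^{(k)}(\mu/2) = 0$ for $k \geq 1$; both follow from the flat vanishing of $\Phi$ and of $\psi$ at the origin. I do not expect any step to be an obstacle; the construction is entirely routine, and since $\mu$ depends only on $n$, the resulting constants $c(k)$ depend only on $k$ (and $n$) as claimed.
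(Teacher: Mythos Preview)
Your construction is correct. The paper itself omits the proof entirely, introducing the lemma only as a ``simple observation'' and moving on, so there is no argument to compare against; your write-up supplies the routine details that the authors left implicit.

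One small remark: once you impose the symmetry $\Phi(x)+\Phi(1-x)=1$, the integral of your first summand over $[\mu/2,\mu]$ equals $\tfrac14$, so the correction coefficient $c$ is forced to be strictly positive. Hence $\rho$ is automatically non-negative with no smallness assumption on $c$, and $\zeta$ is in fact strictly increasing on $[0,\mu)$. This only strengthens your conclusion.
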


We will now embed $M$ into $\mathbb{R}^{n+k}$ so that we have control over its
geometry.  In particular, we will prove the following proposition.
\begin{prop}
  \label{prop:embedding}
  Suppose that $(M^n,g)$ is a compact orientable $n$-dimensional Riemannian
  manifold with volume $V$ and bounded local geometry.  Then there is some
  $k$ which depends on $n$ such that $M$ is diffeomorphic to a submanifold
  $M' \subset \mathbb{R}^{n+k}$, $k \leq \kappa(n+1)\exp(\kappa n)$, with the
  following properties:
  \begin{enumerate}
    \vspace{2mm}
  \item	$M'$ lies in a ball of radius $\lesssim 1$.
    \vspace{2mm}
  \item	Let $F:M' \rightarrow Gr(n+k,n)$ be the smooth map sending $x \in M'$ to
    $T_x M' \subset \mathbb{R}^{n+k}$, the oriented tangent space of $M'$ at $x$.
    Then $F$ has Lipschitz constant $\lesssim 1$.
    \vspace{2mm}
  \item	$\widetilde{M}$ has a normal tubular neighborhood of size $\gtrsim 1/V$.
\end{enumerate}
\end{prop}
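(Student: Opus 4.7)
The plan is to construct $M'$ by aggregating chart coordinates from the atlas $\mathfrak{U} = \mathfrak{U}_1 \sqcup \cdots \sqcup \mathfrak{U}_m$ of Lemma~\ref{lem:chart_existence}, where $m \leq \kappa \exp(\kappa n)$. Fix a smooth bump $\rho:B_\mu \to [0,1]$ equal to $1$ on $B_{\mu/4}$ and vanishing to all orders at $\partial B_\mu$, whose $C^2$-norm depends only on $n$. For each color class $j$, define $E_j:M \to \mathbb{R}^{n+1}$ by
$$E_j(x) = \bigl(\rho(\phi^{-1}(x))\,\phi^{-1}(x),\ \rho(\phi^{-1}(x))\bigr) \quad \text{when } x \in \phi(B_\mu),\ \phi \in \mathfrak{U}_j,$$
and $E_j(x) = 0$ elsewhere. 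Since the images of charts within $\mathfrak{U}_j$ are pairwise disjoint, $E_j$ is well-defined and smooth. Setting $E := (E_1,\ldots,E_m): M \to \mathbb{R}^{m(n+1)}$ gives $n+k = m(n+1)$, matching the claimed bound on $k$.

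Property (1) is immediate: each component $E_j(x)$ has norm at most $\sqrt{\mu^2+1}$, so the image lies in a ball of radius $\sqrt{m(\mu^2+1)} \lesssim 1$. For property (2), properties (4)--(5) of the atlas together with the bounds on $\rho$ give $\|dE\|, \|d^2E\| \lesssim 1$. Moreover, since property (3) of the atlas guarantees every $x \in M$ lies in $\phi(B_{\mu/4})$ for some chart $\phi \in \mathfrak{U}_j$, the corresponding block of $dE$ is $(d\phi^{-1}(x),0)$, whose singular values are bounded below by property (4). Hence $dE$ has singular values $\gtrsim 1$ globally, and the extrinsic Lipschitz constant of the Gauss map is controlled by $\|d^2E\|/\|dE\|^2 \lesssim 1$.

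The heart of the matter is property (3), the lower bound $\gtrsim 1/V$ on the tubular neighborhood radius. The map $E$ is an immersion but not \emph{a priori} injective: whenever two distinct charts $\phi_1,\phi_2 \in \mathfrak{U}_j$ admit $x \in \phi_1(B_\mu)$ and $y \in \phi_2(B_\mu)$ with $\phi_1^{-1}(x) = \phi_2^{-1}(y)$, then $E_j(x) = E_j(y)$, and in principle these ``ghost collisions'' could align across every color class at once. The main obstacle will be ruling such collisions out quantitatively. My approach is to exploit two structural facts: every point of $M$ is contained in $\lesssim 1$ charts (bounded multiplicity of the atlas), and the total number of charts is $\lesssim V$, so the set of pairs of charts that could contribute to a collision has cardinality $\lesssim V^2$. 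By either a small generic perturbation of the chart-specific bump functions, or by adjoining a low-dimensional generic linear embedding of $M$ as additional coordinates, one arranges that intrinsically distant points have extrinsic separation $\gtrsim 1/V$; combined with the local injectivity guaranteed by the singular value bound on $dE$, this yields a normal tubular neighborhood of radius $\gtrsim 1/V$.

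Once $E:M \to \mathbb{R}^{n+k}$ has been produced as an embedding with properties (1)--(3), mapping into $\mathbb{S}^{n+k}$ by inverse stereographic projection preserves all three properties, with constants depending only on $n$, since this map is bi-Lipschitz on any bounded ball of $\mathbb{R}^{n+k}$. The main technical difficulty I anticipate lies entirely in the quantitative transversality argument of step (3): making the perturbation scale explicit in terms of $V$ via the combinatorial structure of the atlas.
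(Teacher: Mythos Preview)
Your framework matches the paper's: use the colored atlas $\mathfrak{U}=\bigsqcup_j \mathfrak{U}_j$ and assemble a map $E:M\to(\mathbb{R}^{n+1})^m$ one color class at a time. Your treatment of properties (1) and (2) is essentially the paper's, and your diagnosis of the obstacle in (3)---that distinct charts in the same $\mathfrak{U}_j$ may carry points with identical local coordinates, forcing a collision in the $j$th block---is exactly right.

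The gap is in how you resolve that obstacle. Both of your proposed fixes are problematic. ``Adjoining a low-dimensional generic linear embedding of $M$'' is circular: there is no ambient embedding of $M$ yet, so there is nothing linear to adjoin. ``Generic perturbation of the chart-specific bump functions'' is too vague to produce an explicit $1/V$ bound; a Sard-type argument yields \emph{some} embedding, but its quantitative separation depends on $M$ in an uncontrolled way, not merely on $V$. The counting you sketch (bounded multiplicity, $\lesssim V$ charts total) does not by itself convert into a $\gtrsim 1/V$ lower bound on extrinsic distance. The paper's device is deterministic and sidesteps all of this: within each $\mathfrak{U}_j=\{\phi^j_1,\ldots,\phi^j_{q_j}\}$, chart $\phi^j_i$ is wrapped (via the function $\zeta$ of Lemma~\ref{lem:function}) onto a sphere $\mathbb{S}_i\subset\mathbb{R}^{n+1}$ of radius $1+i/R$, all tangent at the origin, where $R=\max_j q_j\lesssim V$. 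Distinct charts in the same color class thus land on spheres whose north poles are $\gtrsim 1/R\gtrsim 1/V$ apart, so the required separation is built in by construction---no perturbation, no genericity. If you want to salvage your bump-function picture, the analogous move is to replace the scalar $\rho$ in the last coordinate of $E_j$ by $(i/R)\cdot\rho$ for the $i$th chart in $\mathfrak{U}_j$; that gives height separation $\gtrsim 1/V$ on the plateau $B_{\mu/4}$ and recovers the bound directly.
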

\begin{proof}
  We will use the chart $\mathfrak{U}$ constructed in Lemma
  \ref*{lem:chart_existence} to define an embedding of $(M,g)$ into
  $\mathbb{R}^N$, with $N>2n+3$ depending only on $n$.  By property (2),
  $\mathfrak{U}$ can be written as a disjoint union
  $\bigsqcup_{j=1}^m \mathfrak{U}_j$ of sets of charts with disjoint images.  The
  number of elements in each $\mathfrak{U}_j$ is $\lesssim V$ since these
  disjoint images have volume $\gtrsim 1$.  Let $R=\max_{1 \leq j \leq m}
  \#\mathfrak{U}_j$.  We define $n$-dimensional spheres $\mathbb{S}_1, \dots,
  \mathbb{S}_R$ in $\mathbb{R}^{n+1}$ by the following properties:
  \begin{enumerate}
  \item	$\mathbb{S}_i$ has radius $1 + i/R$;
  \item	every $\mathbb{S}_i$ passes through the origin;
  \item	the center of every $\mathbb{S}_i$ lies on the ray from the origin in
    the direction $(1,0,\dots,0)$.
  \end{enumerate}
  The radii of the spheres are between $1$ and $2$, and the difference between
  any two of the radii is $\gtrsim 1/V$.  An example of such a sequence of
  spheres is shown in Figure \ref*{fig:spheres}.  We will refer to the antipode
  of the origin on each sphere as its ``north pole''.
  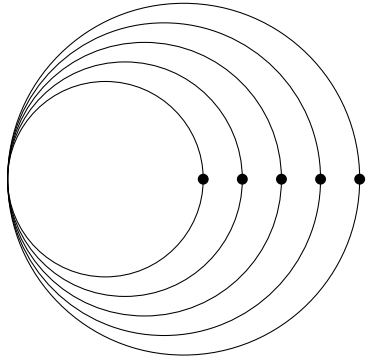
\begin{figure} 
    \centering
    \begin{tikzpicture}[scale=1.3]
      \tikzstyle{dott}=[circle,fill=black,scale=0.4];
      \foreach \x in {0,0.2,...,0.8} {
        \draw (\x,0) circle (1+\x);
        \node[dott] (l\x) at (2*\x+1,0) {};
      }
    \end{tikzpicture}
    \caption{A sequence of $1$-spheres in $\mathbb{R}^2$, with north poles
      spaced at distance $1/V$.}
    \label{fig:spheres}
  \end{figure}

  Define $N=m(n + 1)$ and $k=N-n$.  Fix a point $x \in M$.  Our embedding
  $E: M \rightarrow M' \subset (\mathbb{R}^{n+1})^m$ will map $x$ to
  $(\vec y_1,\ldots,\vec y_m)$, where each $\vec y_j \in \mathbb{R}^{n+1}$, as
  follows.  For every $j$ with $1 \leq j \leq m$, if $x$ is not in the image of
  any chart of $\mathfrak{U}_j$, then we set $\vec y_j=\vec 0$.  If not, then
  $x$ is in the image of exactly one chart $\phi_i: B_{\mu} \rightarrow M$ in
  $\mathfrak{U}_j$, $1 \leq i \leq R$.  In this case, we set $\vec y_j$ to be
  the point on $\mathbb{S}_i$ given by composing $\phi_i^{-1}(x)$ with a map
  $\kappa_i:B_{\mu}\rightarrow \mathbb{S}_i$ which is defined as follows: take
  the origin to the north pole of $\mathbb{S}_i$, and then map the geodesic
  sphere of radius $r$ in $B_{\mu}$ homothetically to the geodesic sphere around
  the north pole in $\mathbb{S}_i$ of radius $\zeta(r) D_i$.  Here $\zeta$ is
  defined as in Lemma \ref{lem:function} and $D_i$ is the intrinsic diameter of
  $\mathbb{S}_i$.

  Define a map $\widehat{\phi_i}:M \to \mathbb{R}^{n+1}$ by
  $$\widehat{\phi_i}=\begin{cases}
  \kappa_i \circ \phi_i^{-1}(x) & x \in \phi_i(B_\mu) \\
  \vec 0 & \text{otherwise.}
  \end{cases}$$
  Since $\zeta$ is smooth and all its derivatives go to 0 at $\mu$, this is a
  smooth map whose derivative has rank $n$ on $B_\mu$.  If the original charts
  in $\mathfrak{U}_j$ are $(\phi^j_1, \dots, \phi^j_{q_j})$, then we can write
  $$E(x)=\left(\sum_{i=1}^{q_1} \widehat{\phi^1_i}(x),\ldots,
  \sum_{i=1}^{q_m} \widehat{\phi^m_i}(x)\right).$$
  Since $\mathfrak{U}$ is an atlas, for any $x$, some $\widehat{\phi^j_i}(x)$ is
  nonzero.  On the other hand, at most one of $\widehat{\phi^j_1}(x),\ldots,
  \widehat{\phi^j_{q_j}}(x)$ is nonzero.  This shows that $E$ is an immersion.
  Moreover, if $E(x_1)=E(x_2)$, then for some chart $x_1$ and $x_2$ are in the
  image of that chart, and in fact $x_1=x_2$.  This shows that $E$ is injective.
  Since every $S_i$ is contained in a ball of radius $2$ around the origin,
  every point in $M$ is mapped to a point in $\mathbb{R}^{n+k}$ of norm
  $\lesssim 1$.

  We have a natural set $\mathfrak{U}^\prime$ of oriented charts for the embedded
  manifold $M'$ given by $E \circ \phi^j_i$ for each $\phi^j_i \in
  \mathfrak{U}$.  Since the first and second derivatives of all of the
  transition maps are bounded $\lesssim 1$, since $\zeta$ has bounded
  derivatives, and since the radii of the balls are all bounded below by $1$ and
  above by $2$, the first and second derivatives of all charts are $\lesssim 1$.
  Moreover, since every point of $M$ is contained in $\phi^j_i(B_{\mu/4})$ for
  some $i$ and $j$, and $\frac{d\zeta}{dt}=1$ for $t \leq \mu/4$, the first
  derivative of each chart is $\gtrsim 1$.

  Combined with the property that the pullback of the metric of $M$ using each
  chart $\phi^j_i$ is comparable to the Euclidean metric, this shows that the
  map from $M$ to $M^\prime$ with its intrinsic Riemannian metric is bilipschitz
  with constant $\lesssim 1$.

  Let us now consider the map $F$ as defined in the statement of the
  proposition.  Fix a point $x^\prime \in M^\prime$, and choose one of the above
  charts $\phi^\prime$ which covers $x^\prime$, and define $x \in B_{\mu}$ to be
  the unique point with $\phi^\prime(x) = x^\prime$.  Choose unit vectors
  $v_1, \dots, v_n$ in $\mathbb{R}^n$ such that
  $$\frac{D_{v_1} \phi^\prime(x)}{\lvert D_{v_1} \phi^\prime(x) \rvert}, \dots,
  \frac{D_{v_n} \phi^\prime(x)}{\lvert D_{v_n} \phi^\prime(x) \rvert}$$
  is an orthonormal set of vectors that spans the tangent plane of $M^\prime$ at
  $x^\prime$.  For any unit vector $w \in \mathbb{R}^n$, consider
  $$\left\lvert D_w \frac{D_{v_1} \phi^\prime(x)}
  {\lvert D_{v_1} \phi^\prime(x) \rvert} \right\lvert, \dots,
  \left\lvert D_w \frac{D_{v_n} \phi^\prime(x)}
              {\lvert D_{v_n} \phi^\prime(x) \rvert} \right\lvert.$$
  Since all first and second derivatives of $\phi'$ are bounded above by
  $\lesssim 1$, and since the first derivatives of $\phi'$ are bounded from
  below by $\gtrsim 1$, all of these values are bounded by $\lesssim 1$.  Since
  the original vectors are orthonormal, for $\epsilon$ sufficiently small the
  distance in $Gr(N,n)$ between the tangent plane at $\phi'(x)$and the tangent
  plane at $\phi'(x + \epsilon w)$ is $\lesssim \epsilon$.  Since $\phi^\prime$
  is $\lesssim 1$-bilipschitz, this completes the proof that $F$ is
  $\lesssim 1$-Lipschitz.
	
  Lastly, we want to show that $M^\prime$ has a normal tubular neighborhood of
  width $\gtrsim 1/V$.  Suppose that $x^\prime$ and $y^\prime$ are two points on
  $M^\prime$ and $v_{x'}$ and $v_{y'}$ are two normal vectors such that
  $x^\prime+v_{x'}=y^\prime+v_{y'}$.  We would like to show that
  $\max(|v_{x'}|, |v_{y'}|) \rvert \gtrsim 1/V$.

  Let $\theta$ be the angle between $v_{x'}$ and $v_{y'}$.  Consider a
  minimal-length geodesic $\gamma$, parametrized by arclength, between $x^\prime$
  and $y^\prime$; $v_{x^\prime}$ and $v_{y^\prime}$ lie in the orthogonal
  $(N-1)$-planes to this geodesic at $x^\prime$ and $y^\prime$, respectively.  The
  above arguments imply that the tautological embedding $M^\prime \to
  \mathbb{R}^N$ has second derivatives $\lesssim 1$.  Therefore, the second
  derivative of $\gamma$ is $\lesssim 1$.
  \begin{prop*}
    Let $\ell=\len(\gamma)$.  Then $\ell \gtrsim \theta$.
  \end{prop*}
  \begin{proof}
    Let $V$ be the plane spanned by $v_{x'}$ and $v_{y'}$, and let $\pi_V$ and
    $\pi_{V^\perp}$ be orthogonal projections to $V$ and $V^\perp$.  Then:
    \begin{itemize}
    \item the average over $[0,\ell]$ of $\pi_{V^\perp}\frac{d\gamma}{dt}$ is $0$;
    \item $\frac{d\gamma}{dt}(0) \cdot v_{x'}=0$ and
      $\frac{d\gamma}{dt}(\ell) \cdot v_{y'}=0$.
    \end{itemize}
    The bounds on the second derivative then imply that for every $t$,
    $$\pi_{V^\perp}\frac{d\gamma}{dt} \lesssim \ell\text{ and }
    \pi_V\frac{d\gamma}{dt} \lesssim \frac{\ell}{\sin(\theta/2)} \lesssim
    \frac{\ell}{\theta}.$$
    Therefore
    $$\ell=\int_0^\ell \sqrt{\left\lvert\pi_V\frac{d\gamma}{dt}\right\rvert^2
      +\left\lvert\pi_{V^\perp}\frac{d\gamma}{dt}\right\rvert^2}dt \lesssim
    \ell^2\sqrt{\theta^{-2}+1},$$
    and therefore $\ell \gtrsim \frac{\theta}{\sqrt{1+\theta^2}} \gtrsim
    \theta$.
  \end{proof}

  Now let $\phi$ be a chart in some $\mathfrak{U}_j$ such that $x^\prime \in
  E \circ \phi(B_{\mu/4})$.  Suppose first that $y^\prime \in E \circ
  \phi(B_{\mu/2})$.  Then the properties of any $\kappa_i$ imply that
  $\lvert x^\prime-y^\prime \rvert \gtrsim \text{length}(\gamma)$; in particular,
  $\lvert x^\prime-y^\prime \rvert \gtrsim \theta$ and so
  $\max(|v_{x'}|, |v_{y'}|) \gtrsim 1$.

  On the other hand, suppose that $y^\prime$ is not in $E \circ \phi(B_{\mu/2})$.
  Suppose first that it is in $E \circ \phi(B_\mu)$ but not
  $E \circ \phi(B_{\mu/2})$.  Here again the properties of any $\kappa_i$ imply
  that $x^\prime-y^\prime \gtrsim 1$.  The same is true if $y^\prime$ is not in
  the image of any $\phi^\prime \in \mathfrak{U}_j$.  Finally, if $y^\prime$ is in
  $\phi^\prime \in \mathfrak{U}_j$ for some $\phi^\prime \neq \phi$, then the
  properties of the $\kappa_i$ imply that $x^\prime-y^\prime \gtrsim 1/V$.  In all
  these cases it must be the case that
  $$\max(|v_{x'}|, |v_{y'}|) \rvert \geq \frac{x^\prime-y^\prime}{2} \gtrsim 1/V.$$

  This completes the proof that $M^\prime$ has a large tubular neighborhood.
\end{proof}

Finally we prove a lemma which allows us to embed $M'$ into a round sphere.
\begin{lem} \label{lem:sphere_embedding}
  Suppose that $M'$ is an embedded submanifold of $\mathbb{R}^{n+k}$ which
  satisfies all of the conclusions of Proposition \ref*{prop:embedding}.  Then
  there is an embedding $\widetilde{E}: M' \rightarrow \widetilde{M} \subset
  \mathbb{S}^{n+k}$ into the round unit sphere such that
  \begin{enumerate}
  \item	$\widetilde{M}$ has a tubular neighborhood of width $\gtrsim 1/V$.
    Additionally, $\widetilde{E}$ can be extended to a $\lesssim 1$-Lipschitz
    diffeomorphism from this tubular neighborhood to a neighborhood of width
    $\gtrsim 1/V$ of $M'$.
  \item	The map $\widetilde{F}: \widetilde{M} \rightarrow Gr(n+k,n)$ given by
    $F \circ \widetilde{E}^{-1}$ has Lipschitz constant $\lesssim 1$.  Here, $F$
    is the map from $M'$ to $Gr(n+k,n)$ from Proposition \ref*{prop:embedding}.
  \end{enumerate}
\end{lem}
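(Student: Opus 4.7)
The approach is to use an inverse stereographic projection onto $\mathbb{S}^{n+k}$ after first normalizing the position of $M'$. By Proposition~\ref{prop:embedding}, $M'$ lies in a ball of radius $\lesssim 1$; after a rigid motion and a rescaling of $\mathbb{R}^{n+k}$ by a factor $\lesssim 1$, we may assume $M'\subset B(0,1/2)\subset\mathbb{R}^{n+k}$. Such a rescaling preserves conclusions (1)--(3) of Proposition~\ref{prop:embedding} up to constants depending only on $n$. View $\mathbb{R}^{n+k}$ as the equatorial hyperplane of $\mathbb{S}^{n+k}\subset\mathbb{R}^{n+k+1}$ and let $\sigma^{-1}:\mathbb{R}^{n+k}\to\mathbb{S}^{n+k}\setminus\{N\}$ be inverse stereographic projection from the north pole. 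On any compact subset of $B(0,1)$, all derivatives of $\sigma^{-1}$ are bounded in terms of $n$ (the ambient dimension $n+k+1$ itself depends only on $n$), so $\sigma^{-1}$ is a smooth $\lesssim 1$-bilipschitz diffeomorphism onto its image with bounded second fundamental form. Set $\widetilde{E}=\sigma^{-1}\circ E$ and $\widetilde{M}=\widetilde{E}(M')$.

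Conclusion (2) is then immediate: $\widetilde{F}=F\circ\widetilde{E}^{-1}$ is the composition of the $\lesssim 1$-Lipschitz map $\widetilde{E}^{-1}=E\circ\sigma$ with the $\lesssim 1$-Lipschitz map $F$.

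For conclusion (1), let $U\subset\mathbb{R}^{n+k}$ be a Euclidean normal tubular neighborhood of $M'$ of width $w\gtrsim 1/V$ supplied by Proposition~\ref{prop:embedding}. Since $\sigma^{-1}|_U$ is a $\lesssim 1$-bilipschitz $C^\infty$ diffeomorphism, the image $\sigma^{-1}(U)$ is an open neighborhood of $\widetilde{M}$ in $\mathbb{S}^{n+k}$, and the required $\lesssim 1$-Lipschitz diffeomorphism to a neighborhood of width $\gtrsim 1/V$ of $M'$ is provided by $\sigma|_{\sigma^{-1}(U)}$. What remains, and what I expect to be the main obstacle, is to show that $\sigma^{-1}(U)$ actually contains a spherical normal tubular neighborhood of $\widetilde{M}$ of width $\gtrsim 1/V$; equivalently, that the spherical normal exponential map $\exp^{\mathbb{S}}:N\widetilde{M}\to\mathbb{S}^{n+k}$ is a diffeomorphism onto its image when restricted to the subbundle of vectors of length $<c/V$ for some $c(n)>0$.

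I propose to obtain this injectivity bound by repeating the argument at the end of Proposition~\ref{prop:embedding} in the spherical setting, using the charts $\mathfrak{U}'$ pushed forward by $\widetilde{E}$. The bounded first and second derivatives of $\widetilde{E}$, inherited from those of $E$ and of $\sigma^{-1}$ on $B(0,1)$, imply that the second fundamental form of $\widetilde{M}\subset\mathbb{S}^{n+k}$ is $\lesssim 1$ (uniformly in $V$). Given $\tilde x,\tilde y\in\widetilde{M}$ and spherical-normal vectors $\tilde v_x\in N_{\tilde x}\widetilde{M}$, $\tilde v_y\in N_{\tilde y}\widetilde{M}$ with $\exp^{\mathbb{S}}_{\tilde x}(\tilde v_x)=\exp^{\mathbb{S}}_{\tilde y}(\tilde v_y)$, a minimizing geodesic $\gamma$ in $\mathbb{S}^{n+k}$ from $\tilde x$ to $\tilde y$ has second derivative $\lesssim 1$ in any fixed chart of the ambient $\mathbb{R}^{n+k+1}$. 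The subproposition $\len(\gamma)\gtrsim\theta$ (with $\theta$ the angle between $\tilde v_x$ and $\tilde v_y$) goes through verbatim, as does the subsequent three-case analysis according to whether $\tilde x$ and $\tilde y$ lie in the same chart of the pushforward atlas, in overlapping charts from the same $\mathfrak{U}'_j$, or in disjoint charts. This yields $\max(|\tilde v_x|,|\tilde v_y|)\gtrsim 1/V$, so $\exp^{\mathbb{S}}$ is injective on vectors of length $<c/V$ and, because its differential is nonsingular on the zero section, a diffeomorphism there by the inverse function theorem.
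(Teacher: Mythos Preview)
Your approach is correct and essentially identical to the paper's: both use (inverse) stereographic projection after centering $M'$ in a bounded ball. The paper's proof is a one-line assertion that stereographic projection works, so the detailed verification of the spherical tubular neighborhood (including repeating the focal-radius argument from Proposition~\ref{prop:embedding}) is entirely your own addition; note only that your notation $\widetilde{E}=\sigma^{-1}\circ E$ and $\widetilde{E}^{-1}=E\circ\sigma$ should read $\widetilde{E}=\sigma^{-1}|_{M'}$ and $\widetilde{E}^{-1}=\sigma|_{\widetilde{M}}$, since $\widetilde{E}$ has domain $M'$, not $M$.
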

\begin{proof}
  $M'$ is contained in a ball of radius $\lesssim 1$, and without loss of
  generality we may assume that this ball is centered at the origin.  If we
  restrict the stereographic projection to $M'$, we obtain an embedded manifold
  of $\mathbb{S}^{n+k}$ which satisfies all of the above properties.
\end{proof}

\subsection{Proof of Theorem \ref*{filling}}

To complete the proof of Theorem \ref*{filling}, we use the embedding of
$M$ in $\mathbb{S}^{n+k}$ produced by combining Proposition \ref*{prop:embedding}
with Lemma \ref*{lem:sphere_embedding}.  We begin by describing the
Pontryagin--Thom map, and by computing its Lipschitz constant.

We map $\mathbb{S}^{n+k}$ into $Y = Th(n+k,n)$, the Thom space of the universal
bundle of oriented $n$-dimensional planes in $\mathbb{R}^{n+k}$, via a map
$G:\mathbb{S}^{n+k} \to Y$ defined as follows.  Let $z \in \mathbb{S}^{n+k}$.  If
$z$ is outside of the tubular neighborhood of $\widetilde{M}$ of width
$c_1(n)/V$ (here the constant depending on $n$ is the same as that in Lemma
\ref*{lem:sphere_embedding}), then it is mapped to $p^*$ (the basepoint of
$Th(n+k,n)$).  If not, then applying $\widetilde{E}^{-1}$ to $z$ produces a
point in the tubular neighborhood of $M'$ of width $c_2(n)/V$ (this constant
depending on $n$ is the same as that in Proposition \ref*{prop:embedding}).
Hence, $\widetilde{E}^{-1}(z) = x + y$ where $x \in M'$ and $y$ is a point in
the oriented normal plane $\mathcal{N}$ of $M'$ at $x$, and $y$ has length
$< c_2(n)/V$.  Both $x$ and $y$ are unique.  We then take
$G(z)=\left(\mathcal{N},\frac{V}{c_2(n)}y\right) \in Th(n + k, n)$.

Since the map $\widetilde{F}$ from Lemma \ref*{lem:sphere_embedding} is
Lipschitz with Lipschitz constant $\lesssim 1$, the map from
$x \in \widetilde{M}$ to the oriented normal plane of $M'$ at
$\widetilde{E}^{-1}(x)$ is also Lipschitz with Lipschitz constant $\lesssim 1$.
If we assume that $c_2(n)/V$ is at most half the critical radius of the tubular
neighborhood, then the projection $z \mapsto x$ has Lipschitz constant $\leq 2$.
Furthermore, the tubular neighborhood of $M'$ is dilated by a factor of
$\lesssim V$ when it is mapped to $Th(n+k,n)$, and the map $\widetilde{E}^{-1}$
has Lipschitz constant $\lesssim 1$ on the tubular neighborhood of width
$c_1(n) V$ of $\widetilde{M}$.  Hence, the Lipschitz constant of $G$ is
$\lesssim V$.

By \cite[Theorem~of~Thom,~page~215]{milnor1974characteristic}, the map $G$ is
nullhomotopic, since $M$ (and so $M'$ and $\widetilde{M}$ with the orientation
induced by the charts $\phi'$ as in the proof of Proposition
\ref*{prop:embedding} and the stereographic projection from Lemma
\ref*{lem:sphere_embedding}) is null-cobordant.  $Th(n+k,n)$ is
$(k-1)$-connected by \cite[Lemma~18.1]{milnor1974characteristic}.  We can
assume, perhaps by adding extra ``empty'' dimensions, that $k>n+3$ and
therefore $2(k-1)>n+k+1$.

By Corollary \ref{cor:highly_connected}, since $Th(n+k,k)$ is a metric CW
complex, there is a nullhomotopy of $G$ with Lipschitz constant
$\lesssim C_{\mathbb{S}^{n+k}, Th(n+k,n)} V$.  This constant depends only on $n$, and
so there is a nullhomotopy $H$ of $G$ of Lipschitz constant $\lesssim V$.  This
extends to a map from a ball $B$ of radius $1$ in $\mathbb{R}^{n+k+1}$ to
$Th(n+k,n)$ with Lipschitz constant $\lesssim V$.

We now observe that we can consider both $B$ and $Y = Th(n+k,n)$ as finite
simplicial complexes in the following sense.  Since the result follows from
standard arguments, we omit the proof.
\begin{lem} \label{lem:simplicial_approximation}
  There is a finite simplicial complex $\widetilde{Y}$ and a scale $L_1(n)$ such
  that if we give each simplex the metric of the standard simplex of side length
  $L_1(n)$, then there is a $2$-bilipschitz function $f_Y$ from $Y$ to
  $\widetilde{Y}$.  Furthermore, the image of the zero-section of $Y$ under this
  map is a subcomplex (and a simplicial submanifold) of $\widetilde{Y}$.

  Similarly, there is a finite simplicial complex $\widetilde{B}$ and a scale
  $L_2(n)$ such that if every simplex is given the metric of the standard
  simplex of side length $L_2(n)$, then there is a $2$-bilipschitz function
  $f_B$ from $B$ to $\widetilde{B}$.  We can also choose $f_B$ so that
  $f_B:\partial B \rightarrow \widetilde{B}$ is a homeomorphism from
  $\partial B$ to $\partial \widetilde{B}$.

  Both $L_1(n)$ and $L_2(n)$ depend only on $n$.
\end{lem}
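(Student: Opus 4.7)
The plan is to exhibit standard Whitney-type smooth triangulations of the fixed spaces $B$ and $Y$, noting that since these spaces depend only on $n$ (with $k$ itself bounded in terms of $n$), any sufficiently fine triangulation produces constants $L_1(n)$ and $L_2(n)$ that depend only on $n$. The underlying principle is that a smooth triangulation of a closed Riemannian manifold with simplices much smaller than the curvature scale induces a PL metric that is $(1+\epsi)$-bilipschitz to the Riemannian metric; choosing $\epsi < 1$ then yields the required factor of $2$.

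For the ball $B \subset \mathbb{R}^{n+k+1}$, I would triangulate a cube slightly larger than $B$ using a standard scheme (such as Kuhn's triangulation), take a PL approximation $\widetilde{B}$ whose boundary is a PL sphere that is homeomorphic in the obvious way to $\partial B$, and rescale so that every edge has length $L_2(n)$. Because the Euclidean metric is flat, the PL metric will be $(1+\epsi)$-bilipschitz to the restriction of the Euclidean metric for any sufficiently small $L_2(n)$, with only mild adjustments needed in a collar neighborhood of $\partial B$ so that $f_B$ identifies $\partial B$ with $\partial \widetilde{B}$.

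For $Y = Th(n+k,n)$, I would first choose a smooth triangulation of the closed manifold $Gr(n+k,n)$, then extend it to a triangulation of the closed unit disk bundle $D(E)$ of the tautological bundle in such a way that both the zero section and the unit sphere bundle $S(E)$ are subcomplexes. This is a standard fiber-bundle triangulation construction: pick a finite trivializing cover, refine the base triangulation to be subordinate to this cover, triangulate each product $\mathrm{disk} \times \mathrm{simplex}$ with a standard product scheme radiating from the zero section, and glue the results together via PL maps on overlaps. Collapsing $S(E)$ to a single vertex yields the required simplicial structure on $Y$, in which the image of the zero section is automatically a subcomplex and a simplicial submanifold. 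Rescaling so that each simplex is standard of side length $L_1(n)$ and invoking the Whitney-type bilipschitz comparison on $D(E)$ completes the construction.

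The main obstacle is verifying the bilipschitz estimate on $Y$ near the basepoint $p^*$, where the collapse of $S(E)$ creates a conical singularity in both the Riemannian quotient metric and the PL metric. The resolution is to arrange the triangulation so that each fiber disk is triangulated as a cone from the basepoint; then distances involving $p^*$ reduce to computations within a single Euclidean fiber, where the PL metric is exact, while distances bounded away from $p^*$ are controlled by the bilipschitz comparison on $D(E)$. A patching argument on the annular region interpolates between the two regimes without loss of bilipschitz control. Since $D(E)$, $Gr(n+k,n)$, and the combinatorial data involved all depend only on $n$, the resulting constants $L_1(n)$ and $L_2(n)$ depend only on $n$, as required.
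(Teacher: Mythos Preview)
The paper omits the proof entirely, stating only that ``the result follows from standard arguments.'' Your outline is a perfectly reasonable instantiation of those standard arguments: Whitney-type triangulations of the compact pieces, a product/cone triangulation of the disk bundle with the zero section and sphere bundle as subcomplexes, and a radial map for the ball. Since the paper gives no argument to compare against, there is little to say about differences in approach.

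One small technical point worth tightening: collapsing the subcomplex $S(E)$ to a single vertex does not literally produce a simplicial complex, since distinct simplices of $D(E)$ sharing a face in $S(E)$ may now have coinciding vertex sets. The clean fix is to triangulate $D(E)$ so that a collar neighborhood of $S(E)$ is already a simplicial cone on a triangulation of $S(E)$ (i.e., arrange that the outermost shell is $S(E) * v$ for a single new vertex $v$), and then identify $v$ with $p^*$; alternatively, subdivide after the collapse. Either route preserves the bilipschitz control you describe near the cone point, and your identification of that cone point as the only nontrivial place to check the estimate is exactly right.
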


We can now consider the map $\widetilde{H}: \widetilde{B} \rightarrow \widetilde{Y}$ given by $f_Y \circ G \circ f^{-1}_B$.
Since the maps are $2$-bilipschitz, $\widetilde{H}$ is still $\lesssim V$ bilipschitz.  With a slight abuse of notation, we will
refer to $\widetilde{Y}$ by $Y$, $\widetilde{H}$ by $H$, and $\widetilde{B}$ by $B$.
By using Proposition \ref*{prop:quantitative_simplicial_approximation}, we can subdivide the simplices
of $B$ to form $B'$ such that $H$ can be homotoped to a simplicial map from $B'$ to $Y$ with Lipschitz constant $\lesssim V$.
We also know that the side lengths of the simplices in $B'$ are $\gtrsim 1/V$.
We will define $Z$ to be the simplicial submanifold formed by applying $f_Y$ on the zero-bundle of $Th(n+k,n)$.

Clearly, $H^{-1}(Z) \cap \partial B$ is a PL manifold which is homeomorphic to $M$; this is because the map $f_B$ was assumed to be
a homeomorphism from the boundary of the ball to the boundary of the simplicial approximation of the ball.  We will begin
by perturbing $Z$ to $Z'$, a PL-manifold embedded in $Y$.  We want $Z'$ to have the following properties:
\begin{enumerate}
\item $Z'$ is an $n$-dimensional PL manifold.
\item $G^{-1}(Z') \cap \partial B$ is homeomorphic to $M$.
\item For every open $k$-simplex $c$ of $Y$, $Z'$ is transverse to $c$.
\item $Z'$ depends only on $n$.
\end{enumerate}
We can find such a PL-manifold by perturbing $Z$ using PL transversality theory.  There are several standard
references for this; see for example \cite[Theorem~5.3]{pl_topology}.  This theorem does not yield this result directly,
but can be adapted to do so.

We will use the transverse inverse image of $Z'$ to construct our filling.  We
know that $H^{-1}(Z') \cap \partial B$ is homeomorphic to $M$ from property (2).
Furthermore, the fact that the map is simplicial combined with properties (1)
and (3) implies that $H^{-1}(Z')$ is an $(n+1)$-dimensional PL manifold with
boundary, and its boundary is $H^{-1}(Z') \cap \partial B$.  Furthermore, since
the sphere, the ball, the simplicial approximations to them, and the embedded
manifold $\widetilde{M}$ are all orientable, from the discussion on page 210 of
\cite{milnor1974characteristic} we see that we also have that this manifold is
orientable, and agrees with the orientation of its boundary (which is
homeomorphic to $M$).

We now estimate the volume of $H^{-1}(Z^\prime)$.  Since $B$ only depends on $n$,
the number of simplices of $B^\prime$ is $\lesssim V^{n+k+1}$.  Since $H$ is a
simplicial map, the intersection of $H^{-1}(Z^\prime)$ with a given simplex
belongs to a finite set of subsets which depends only on $n$; since the
simplices are at scale $\sim 1/V$, the $(n+1)$-dimensional volume of this
intersection is $\lesssim V^{-(n+1)}$.  Therefore, the volume of $H^{-1}(Z^\prime)$
is $\lesssim V^k$, where $k$ is $O(\exp(n))$.

To build our manifold, we smooth out $W = H^{-1}(Z') \cap c$ and $\partial W$.
We can do this so that the volumes do not increase very much, and so that
$\partial{W}$, after smoothing, is diffeomorphic to $M$.  As above, since $Z'$
and $Y$ depend only on $n$, since $Y$ is a finite complex, and since the side
lengths of the simplices in $B$ are $\gtrsim 1/V$, this smoothing can be done so
that the result has $\geo \lesssim V$ (including on the boundary).  After
dilating the smoothed version of $W$ by a factor which is $\lesssim V$, we have
a compact oriented manifold $\widetilde{W}$ with $\geo(\widetilde W) \leq 1$,
whose boundary is (orientation-preserving) diffeomorphic to $M$.  The dilation
increases the volume of the resulting manifold by a factor of $\lesssim V^{n+1}$,
and so the result still has volume bounded by $\lesssim V^k$.

In particular, after the dilation has been performed, we obtain a manifold with
bounded local geometry with volume bounded by $\lesssim V^k$, and which
bounds a manifold diffeomorphic to $M$ with locally bounded geometry.  Thus the
complexity of the null-cobordism of $M$ is $\lesssim V^k$.  Since $V$ is within
a factor of $2$ of the complexity of $M$, this completes the proof of the
theorem.

\bibliographystyle{amsalpha}
\bibliography{qhomcob}

\end{document}